\documentclass[10pt,a4paper]{article}
\voffset=-1.75cm \hoffset=-1.25cm \textheight=24.0cm \textwidth=15.2cm
\usepackage{amsmath, amsfonts,amsthm,amssymb,amsbsy,upref,color,graphicx,amscd,hyperref,makeidx,enumerate}
\usepackage[active]{srcltx}
\usepackage[latin1]{inputenc}


\newcommand{\sm}{\setminus}

\newcommand{\g}{\gamma}
\newcommand{\Om}{\Omega}

\newcommand{\lb}{\lambda}
\newcommand{\vps}{\varepsilon}

\newcommand{\ra}{\rightarrow}

\newcommand{\nif}{{n \rightarrow \infty}}

\newcommand{\sq}{\subseteq}


\def\ds{\displaystyle}
\def\eps{{\varepsilon}}
\def\N{\mathbb{N}}
\def\O{\Omega}
\def\om{\omega}
\def\R{\mathbb{R}}
\def\Q{\mathbb{Q}}

\def\HH{\mathcal{H}}

\def\Dr{D}

\newcommand{\be}{\begin{equation}}
\newcommand{\ee}{\end{equation}}
\newcommand{\bib}[4]{\bibitem{#1}{\sc#2: }{\it#3. }{#4.}}

\newcommand{\cp}{\mathop{\rm cap}\nolimits}

\numberwithin{equation}{section}
\theoremstyle{plain}
\newtheorem{teo}{Theorem}[section]
\newtheorem{lemma}[teo]{Lemma}
\newtheorem{cor}[teo]{Corollary}
\newtheorem{prop}[teo]{Proposition}
\newtheorem{deff}[teo]{Definition}
\theoremstyle{remark}
\newtheorem{oss}[teo]{Remark}

\title{Multiphase shape optimization problems}

\author{Dorin Bucur, Bozhidar Velichkov}


\begin{document}
\maketitle
\begin{abstract}
This paper is devoted to the analysis of multiphase shape optimization problems, which can formally be written as
$$\min\Big\{{g}(F_1(\Om_1),\dots,F_h(\Om_h))+ m|\ds \bigcup_{i=1}^h\Om_i| :\ \Om_i\subset D,\ \Om_i\cap \Om_j =\emptyset\Big\},$$
where $D\sq \R^d$ is a given bounded open set, $|\Om_i|$ is the Lebesgue measure of $\Om_i$ and $m$ is a positive constant. 
For a large class of such functionals, we analyse qualitative properties of the cells and the interaction between them. Each cell is itself subsolution for a (single-phase) shape optimization problem, from which we deduce properties like
finite perimeter, inner density,
separation by open sets, absence of triple junction points, etc. 
 As main examples we consider functionals involving the eigenvalues of the Dirichlet Laplacian of each cell, i.e. $F_i=\lambda_{k_i}$.
\end{abstract}

\textbf{Keywords:} shape optimization, multiphase, eigenvalues, optimal partitions

\textbf{2010 Mathematics Subject Classification:} 49J45, 49R05, 35P15, 47A75, 35J25

\section{Introduction}\label{intro}
Let $D \sq \R^d$ be a bounded open set and $m\ge 0$. We study multiphase shape optimization problems of the form 
\begin{equation}\label{sintroe0}
\min\Big\{{g}(F_1(\Om_1),\dots,F_h(\Om_h))+ m|\ds \bigcup_{i=1}^h\Om_i|  :\ \Om_i\subset D,\ \Om_i\cap \Om_j =\emptyset\Big\},
\end{equation}
where  $|\Om|$ is the Lebesgue measure of $\Om$. To each cell $\Om_i$, we associate a shape functional $F_i$, the interaction between cells being described by the function $g:\R^h \ra \R$. If one fixes $h-1$ cells of an optimal configuration, and let formally free only one, this cell is a shape subsolution. In a neighborhood of the junction points, it can be compared only with its inner perturbations.  One of the main questions raised by such a shape optimization problem concerns precisely  the interaction between the cells. The functionals ${ F_i}$ we consider here, involve quantities related to the Dirichlet Laplacian operator on each cell as for example the eigevalues  $ (\lb_k(\Om_i))_{k\in \N}$ of the Laplace operator with Dirichlet boundary conditions on a quasi-open set $\Om_i$.

For  a very particular choice of $g$ and $F_i$, this topic was intensively studied in the last years, essentially for functionals involving the first eigenvalue
\begin{equation}\label{bvjkl}
{g}(F_1(\Om_1),\dots,F_h(\Om_h))=  \sum_{i=1}^h \lb_{1} (\Om_i)\;\;\mbox{and}\;\; {g}(F_1(\Om_1),\dots,F_h(\Om_h))=\max_{i=1,\dots,h} \lb_{1} (\Om_i).
\end{equation}
For $m=0$, we refer the reader to the papers \cite{coteve03,coteve051,coteve052,hehote,caflin}  
and the references therein, while 
for $m>0$, only the case  $h=1$ was studied in \cite{bhp05,brla}. 

Many interesting qualitative results were obtained for \eqref{bvjkl}, among which regularity properties of the boundaries and interesting information on the junction points. 

In this paper we intend to discuss general functionals $F_i$, precisely functionals which have a variation  controlled by the Dirichlet energy (see Definition \ref{gammalip} below) e.g.  the $k$-th eigenvalue of the Dirichlet Laplacian, in a context where  the measure constraint is relevant ($m>0$). For example,  problems of the form
\begin{equation}\label{buveq00}
\min \Big\{\sum_{i=1}^n \lb_{k_i} (\Om_i)+m|\Om_i| :\ \Om_i\subset D,\ \Om_i \ \hbox{quasi-open,}\;\ \Om_i \cap \Om_j =\emptyset\Big\}. 
\end{equation}
fit in our framework. 
If $m>0$, the sets $\Om_i$ will not in general cover $D$ and a void region will appear, so the solution will be a sort of lacunary partition of $D$.
As we consider general functionals $F_i$,   the same tools used for the regularity of the free boundaries in \cite{coteve03,caflin} can not be adapted. Even if $F_i$ is simply the $k$-th eigenvalue of the Dirichlet Laplacian, obtaining a regularity results is a  complicated task, since the $k$-th eigenvalue is itself a critical point and not a minimizer as the first eigenvalue is. 

 We refer the reader to the survey papers \cite{but10, hen03} and the books \cite{bubu05, hen06, hepi05} for a detailed introduction to the topic of shape optimization problems. 
Existence of a solution for \eqref{sintroe0} in the class of quasi-open sets was proved in \cite{bubuhe98}  and is a consequence of a general result due to Buttazzo and Dal Maso (see \cite{budm91, budm93}).

We focus in this paper on the analysis of the geometric interaction between cells. 
Our main tool involves  the analysis of the shape subsolutions for the torsional energy, i.e. quasi-open  sets $\Omega\subset\R^d$ which satisfy for some $m >0$
\begin{equation}\label{sintroe1}
E(\Omega)+m|\Omega|\le E(\widetilde\Omega)+m|\widetilde\Omega|,\ \forall\widetilde\Omega\subset\Omega,
\end{equation}
where $E(\Omega)$ is the torsional energy (see also \eqref{diren} below)
$$E(\Om)= \min\Big\{\frac12 \int_\Om |\nabla u|^2\,dx - \int _\Om u\,dx : u \in H^1_0(\Om)\Big\}.$$
Under mild assumptions on $g$ and for a quite large class of functionals $F_i$, every cell of the optimal solution of   \eqref{sintroe0} is a shape subsolution of the torsional energy.

 Analyzing the properties of the  subsolutions we prove that (Sections \ref{s4} and \ref{s5})
 \begin{itemize}
\item each cell satisfies inner density estimates and has finite perimeter;
\item there are no triple junction points, i.e.  $\partial\Omega_i\cap\partial\Omega_j\cap\partial\Omega_k=\emptyset$, for different $i,j,k$;

\item each (quasi-open) cell $\Om_i$ can be isolated by an open set $D_i$ from the other cells, and solves
the problem
$$\min\{F_i(\Om) :\ \Om \sq D_i,\ \Om \; \mbox{quasi-open},\;|\Om|=|\Om_i| \};$$
\item if $F_i$ depends in \eqref{sintroe0} only on the first and the second eigenvalues, there exists a solution consisting of open cells;

\item in $\R^2$, for $m=0$,   every solution of \eqref{buveq00} is equivalent to a solution consisting of open sets.
\end{itemize}
We emphasize that a subsolution is not, in general, an open set, as  Remark  \ref{buveqo1} shows. Even for the solutions of some simple one-phase shape optimization problems, as 
\begin{equation}\label{ex011.1}
\min\{\lb_{k}(\Om) :\ \Om \sq D,\ \Om\ \hbox{quasi-open},\ |\Om|=m\},
\end{equation}
 with $k \ge 3$,
  the optimal set $\Omega$ is, a priori, no more than a quasi-open set. Until recently, the only functionals which were known to have (smooth) open sets as solutions were the first eigenvalue (see \cite{brla}) and the Dirichlet Energy (see \cite{bhp05}). 

The study of triple junction points goes through a multiphase monotonicity formula (Lemma \ref{mth3})  in the spirit of \cite{cajeke} and \cite[Lemmas 4.2 and 4.3]{coteve03}, which is proved in the Appendix. Precisely, if $u_i \in H^1(B_1)$, $i=1,2,3$, are three non-negative functions with disjoint supports and such that $\Delta u_i\ge-1$, for each $i=1,2,3$, then there are dimensional constants $\eps>0$ and $C_d>0$ such that for each $r\in(0,\frac12)$
\begin{equation}\label{mth3e1}
\prod_{i=1}^3\left(\frac{1}{r^{2+\eps}}\int_{B_r}\frac{|\nabla u_i|^2}{|x|^{d-2}}\,dx\right)\le C_d\left(1+\sum_{i=1}^3\int_{B_1}\frac{|\nabla u_i|^2}{|x|^{d-2}}\,dx\right).
\end{equation}  
The main gain of this multiphase monotonicity formula is that for junction points of three cells (or more), at least one gradient decays faster than $r^{\vps/2}$, which contradicts the super linear decay which is expected for subsolutions, cf. Lemma \ref{bnd}.

\section{Preliminaries}\label{pr}
In this section we recall some of the notions and results that we need in this paper. 
\subsection{Capacity and quasi-open sets} As we mentioned in the introduction, for our purposes it is convenient to extend the notion of a Sobolev space and Laplace operator to measurable sets. One has to use the notion of capacity of a set $E\subset\R^d$, which is defined as

\begin{equation}
\cp(E)=\inf\left\{\|u\|_{H^1}:\ u\in H^1(\R^d),\ u\ge 1\ \hbox{in a neighbourhood of}\ E\right\},
\end{equation}
where $\|u\|^2_{H^1}=\|u\|^2_{L^2}+\|\nabla u\|^2_{L^2}$ (see, for example, \cite{hepi05} for more details). 
\begin{itemize}
\item We say that a property $\mathcal{P}$ holds quasi-everywhere (shortly q.e.) in $\R^d$, if the set of points $E$, where $\mathcal{P}$ does not hold, is of zero capacity ($\cp(E)=0$). 
\item We say that a set $\Omega\subset\R^d$ is quasi-open, if for each $\varepsilon>0$ there is an open set $\omega_\varepsilon$ of capacity $\cp(\omega_\varepsilon)\le\varepsilon$ such that $\Omega\cup\omega_\varepsilon$ is an open set. 
\item A function $u:\R^d\to\R$ is quasi-continuous, if for each $\varepsilon>0$ there is an open set $\omega_\varepsilon$ of capacity $\cp(\omega_\varepsilon)\le\varepsilon$ such that the restriction of $u$ on the closed set $\R^d\setminus\omega_\varepsilon$ is a continuous function.
\end{itemize} 
We note that any function $u\in H^1(\R^d)$ has a quasi-continuous representative $\widetilde u:\R^d\to\R$, which is unique up to sets of zero capacity (see \cite{hepi05}). Moreover, if the sequence $u_n\in H^1(\R^d)$ converges strongly in $H^1(\R^d)$ to the function $u\in H^1(\R^d)$, then there is a subsequence converging quasi-everywhere. 

The notion of capacity and all the properties mentioned above can be naturally extended on the $(d-1)$ dimensional sphere $\partial B_1$, which locally behaves like $\R^{d-1}$.  In particular, a set $E \sq \partial B_1$ has $(d-1)$-capacity zero, if when seen through any local chart, it has zero capacity in $\R^{d-1}$.
A set of $(d-1)$-capacity zero on the sphere has also zero capacity as a subset of $\R^d$. As well, let $E\subset \partial B_1$ be a set of non-zero $(d-1)$-capacity in $\partial B_1$. Then there is a constant $C>0$ (related to the first Dirichlet eigenvalue of the Laplace Beltrami operator on $\partial B_1\setminus E$) such that for each $u\in H^1(\partial B_1)$, which vanishes $(d-1)$-quasi-everywhere on $E$, we have  
$$\int_{\partial B_1}u^2\,d\HH^{d-1} \le C\int_{\partial B_1}|\nabla_\tau u|^2\,d\HH^{d-1}.$$

In $\R^d$ the canonical quasi-continuous representative $\widetilde u$ of $u\in H^1(\R^d)$ has a pointwise definition, i.e. for quasi-every $x\in\R^d$ the following limit exists
\begin{equation}\label{pwdefu}
\widetilde u(x)=\lim_{r\to0}\frac{1}{|B_r|}\int_{B_r(x)} u (y)\,dy.
\end{equation}

   We now define the Sobolev space $H^1_0(\Omega)$, for every measurable set $\Omega\subset\R^d$, 
\begin{equation}
H^1_0(\Omega)=\left\{u\in H^1(\R^d):\ \widetilde u=0\ \,\hbox{q.e. on}\ \Omega^c\right\}.
\end{equation}
In the case when $\Omega$ is an open set, $H^1_0(\Omega)$ coincides with the classical Sobolev space defined as the closure of the smooth functions with compact support $C^\infty_c(\Omega)$, with respect to the norm $\|\cdot\|_{H^1}$ (see \cite{hepi05}). We note that the quasi-open sets are the natural domains for the Sobolev spaces. Indeed, for any measurable set $\Omega$, there is a quasi-open set $\omega\subset\Omega$ q.e. such that $H^1_0(\omega)=H^1_0(\Omega)$ and which is also the largest quasi-open set contained q.e. in $\Omega$. In the case when $\Omega$ has finite measure, the set $\omega$ coincides quasi-everywhere with the level set $\{w_\Omega>0\}$, where $w_\Omega\in H^1(\R^d)$ is the weak solution of 
\begin{equation}\label{w}
-\Delta w_\Omega=1,\qquad w_\Omega\in H^1_0(\Omega),
\end{equation}
defined as the unique minimizer in $H^1_0(\Omega)$ of the torsional functional 
\begin{equation}\label{J}
J(w)=\frac12\int_{\R^d} |\nabla w|^2\,dx-\int_{\R^d} w\,dx.
\end{equation}
In particular, if $\Omega$ is a quasi-open set,  the strong maximum principle holds in the form $\Omega=\{w_\Omega>0\}$ q.e. The torsional energy $E(\Omega)$ of the quasi-open set of finite measure $\Omega\subset\R^d$ is defined as 
\begin{equation}\label{diren}
E(\Omega)=-\frac12\int_\Omega w_\Omega\,dx.
\end{equation}

\subsection{The $\gamma$ and weak $\gamma$-convergence}
The identification of the quasi-open sets $\Omega$ and their torsional function $w_\Omega$ leads naturally to the following (more functional than geometrical) distance.  We define the so called $\g$-distance between two quasi-open sets of finite measure $\Om_1$ and $\Om_2$ by

$$d_\g(\Om_1,\Om_2)=\int_{\R^d}|w_{\Om_1}-w_{\Om_2}|\,dx.$$
Notice that if $\Om_1 \sq \Om_2$ then $d_\g(\Om_1,\Om_2)=\frac12 [E(\Om_1)-E(\Om_2)]$.

\begin{deff}
In the family of quasi-open sets of finite measure, it is said that the sequence $\Omega_n$ $\gamma$-converges to $\Omega$ if  $d_\g(\Om_n, \Om)\ra 0$, as $\nif$.
\end{deff}
Sometimes, the $\g$-distance is defined using the $L^2$-norm of $w_{\Om_1}-w_{\Om_2}$. In a family of  sets  with uniformly bounded measure,  the two distances are equivalent. For the purposes of our paper, it is more convenient to use  the $L^1$-norm. 
\begin{deff}
In the family of quasi-open sets of finite measure, it is said that the sequence $\Omega_n$ weak $\gamma$-converges to $\Omega$  if the sequence of the corresponding torsional functions $w_{\Omega_n}$ converges in $L^2(\R^d)$ to some function $w\in H^1(\R^d)$ and $\Omega=\{w>0\}$. 
\end{deff}

\begin{oss}\label{wg130312}
For every (quasi-) open set $\Dr$ of finite Lebesgue measure the set 
$$\mathcal{A}_{cap} (D)=\left\{\Omega:\ \Omega\ \hbox{quasi-open},\ \Omega\subset\Dr\right\},$$ 
is sequentially compact for the weak $\gamma$-convergence. Indeed, let $\Omega_n\in\mathcal{A}_{cap} (D)$ be a sequence of quasi-open sets and let $w_n$ be the sequence of corresponding torsional functions. By \eqref{w} and the Gagliardo-Nirenberg-Sobolev inequality, we have
$$\int_\Dr |\nabla w_n|^2\,dx=\int_\Dr w_n\,dx\le |\Omega_n|^{\frac{d+2}{2d}}\|w_n\|_{L^{\frac{2d}{d-2}}}\le C_d|\Dr|^{\frac{d+2}{2d}}\|\nabla w_n\|_{L^2},$$ 
and so, $w_n$ is bounded in $H^1_0(\Dr)$. The compactness of $\mathcal{A}_{cap} (D)$ now follows by the compactness of the inclusion $H^1_0(\Dr)\subset L^2(\Dr)$. 
\end{oss}
\begin{oss}\label{gwg130212}
As a consequence of the Fatou Lemma, 
the Lebesgue measure is lower semi-continuous with respect to the weak $\gamma$-convergence in $\mathcal{A}_{cap} (D)$.
Moreover, if the sequence $\Omega_n\in\mathcal{A}_{\cp}(\Dr)$ weak $\gamma$-converges to $\Omega$, then, for a suitable subsequence, there is a sequence of quasi-open sets $\omega_k$ such that $\omega_k\supset\Omega_{n_k}$ and $\omega_k$ $\gamma$-converges to $\Omega$ (see for example \cite{bubu05}).
\end{oss}
   The weak $\gamma$-convergences is used to establish existence results for shape optimization problems where the shape functional is $\g$-continuous and decreasing for inclusions. We recall here a general existence result, proved in \cite{bubuhe98}, which is a multiphase version of the classical Butazzo-Dal Maso Theorem (see \cite{budm93}).
\begin{teo}\label{introth130312}
Let $\Dr\subset\R^d$ be a quasi-open set of finite Lebesgue measure and let $\mathcal{F}:\left[\mathcal{A}_{\cp}(\Dr)\right]^h\to\R$  satisfy
\begin{enumerate}[(i)]
\item $\mathcal{F}$ is decreasing with respect to the inclusion, i.e. if $\widetilde\Omega_i\subset\Omega_i$, for all $i=1,\dots,h$, then
$$\mathcal{F}(\Omega_1,\dots,\Omega_h)\le \mathcal{F}(\widetilde\Omega_1,\dots,\widetilde\Omega_h);$$
\item $\mathcal{F}$ is lower semi-continuous with respect to the $\gamma$-convergence, i.e. if $\Omega_i^n$ $\gamma$-converges to $\Omega_i$, for every $i=1,\dots,h$, then 
$$\mathcal{F}(\Omega_1,\dots,\Omega_h)\le\liminf_{n\to\infty}\mathcal{F}(\Omega_1^n,\dots,\Omega_h^n).$$
\end{enumerate}
Then the multiphase shape optimization problem 
\begin{equation}\label{sopF130312}
\min\left\{\mathcal{F}(\Omega_1,\dots,\Omega_h)+m\sum_{i=1}^h|\Omega_i|:\ \Omega_i\in\mathcal{A}_{\cp}(\Dr),\ \forall i;\ \Omega_i\cap\Omega_j=\emptyset,\ \forall i\neq j\right\},
\end{equation}
has a solution for every $m\ge0$. 
\end{teo}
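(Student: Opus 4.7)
The plan is to apply the direct method of the calculus of variations. Let $(\Omega_1^n, \dots, \Omega_h^n)$ be a minimizing sequence for \eqref{sopF130312}; in particular each $\Omega_i^n \in \mathcal{A}_{cap}(D)$, the cells are pairwise disjoint q.e., and the sum $\mathcal{F}(\Omega_1^n,\dots,\Omega_h^n)+m\sum_i|\Omega_i^n|$ is bounded. By the weak $\gamma$-compactness of $\mathcal{A}_{cap}(D)$ stated in Remark \ref{wg130312}, after extracting a diagonal subsequence I may assume that, for every $i=1,\dots,h$, the sets $\Omega_i^n$ weak $\gamma$-converge to some $\Omega_i \in \mathcal{A}_{cap}(D)$; equivalently $w_{\Omega_i^n}\to w^i$ in $L^2(\R^d)$ (and weakly in $H^1_0(D)$), with $\Omega_i=\{w^i>0\}$.

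I first need disjointness of the limiting cells. Since $w_{\Omega_i^n}$ vanishes q.e. outside $\Omega_i^n$ and $\Omega_i^n\cap\Omega_j^n=\emptyset$ q.e., the product $w_{\Omega_i^n} w_{\Omega_j^n}$ vanishes a.e. for $i\ne j$. Passing to the $L^2$-limit (or using the a.e.\ convergence of an extracted subsequence) gives $w^i w^j = 0$ a.e. Since $w^i,w^j\in H^1(\R^d)$, the product $w^i w^j$ is quasi-continuous, and a quasi-continuous function that vanishes a.e.\ vanishes q.e.; hence $\Omega_i\cap\Omega_j=\{w^i w^j>0\}=\emptyset$ q.e., so the candidate $(\Omega_1,\dots,\Omega_h)$ is admissible.

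Next I must pass the cost to the limit. The main subtlety, and what I expect to be the key step, is that $\mathcal{F}$ is only assumed $\gamma$-lower semi-continuous while my compactness produces only weak $\gamma$-convergence. This is bridged by the monotonicity (i) together with Remark \ref{gwg130212}: for each $i$ that remark provides, up to a subsequence, quasi-open sets $\omega_k^i \supset \Omega_{n_k}^i$ with $\omega_k^i \xrightarrow{\gamma}\Omega_i$; iterating over $i=1,\dots,h$ and passing to a common subsequence I obtain such $\omega_k^i$ simultaneously for all $i$. By monotonicity and then by the $\gamma$-lower semi-continuity of $\mathcal{F}$,
\begin{equation*}
\mathcal{F}(\Omega_1,\dots,\Omega_h) \le \liminf_{k\to\infty}\mathcal{F}(\omega_k^1,\dots,\omega_k^h) \le \liminf_{k\to\infty}\mathcal{F}(\Omega_{n_k}^1,\dots,\Omega_{n_k}^h).
\end{equation*}
Note that the $\omega_k^i$ need not be disjoint, but this does not matter because $\mathcal{F}$ is defined on all of $[\mathcal{A}_{cap}(D)]^h$.

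Finally, the Lebesgue measure is weakly $\gamma$-lower semi-continuous by Remark \ref{gwg130212}, so $|\Omega_i|\le \liminf_k|\Omega_{n_k}^i|$ for each $i$, and summing the finitely many terms gives $\sum_i|\Omega_i|\le \liminf_k\sum_i|\Omega_{n_k}^i|$. Combining with the previous display shows that $(\Omega_1,\dots,\Omega_h)$ realises the infimum, which proves existence. The main obstacle throughout is the mismatch between the available compactness (weak $\gamma$) and the available semi-continuity ($\gamma$); the monotonicity hypothesis (i) is exactly what is needed to close this gap via the enlargement sets $\omega_k^i$.
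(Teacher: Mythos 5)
Your proof is correct and follows essentially the same route the paper takes (which it only sketches, deferring to the cited reference): weak $\gamma$-compactness of $\mathcal{A}_{\cp}(\Dr)$ from Remark \ref{wg130312}, and the bridge from $\gamma$-lower semicontinuity to weak $\gamma$-lower semicontinuity via monotonicity and the enlargement sets $\omega_k\supset\Omega_{n_k}$ of Remark \ref{gwg130212}. The details you supply --- the quasi-continuity argument for disjointness of the limit cells and the diagonal extraction over the finitely many phases --- are exactly the right ones to fill in.
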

The proof is a consequence of Remarks \ref{wg130312} and \ref{gwg130212}, the essential point being the fact that a decreasing shape functional which is $\g$-lower semicontinuous, is also weak $\g$-lower semicontinuous. 

\begin{oss}
There is a large class of functionals which are known to be decreasing and lower semi-continuous with respect to the $\gamma$-convergence (see \cite{bubu05, but10}, for more details). Typical examples are
\begin{itemize}
\item the Dirichlet Energy  defined as
$$\min\left\{\frac12\int_\Omega |\nabla u|^2\,dx-\int_\Omega uf\,dx:\ u\in H^1_0(\Omega)\right\},$$
where $f\in L^2(D)$ is a given function;
\item the eigenvalues of the Dirichlet Laplacian, i.e. 
$$\lambda_k(\Omega)=\min_{S_k\subset H^1_0(\Omega)}\max\left\{\int_\Omega |\nabla u|^2\,dx:\ u\in S_k,\ \int_\Omega u^2\,dx=1\right\},$$
where the minimum is over all $k$-dimensional subspaces $S_k\subset H^1_0(\Omega)$. 
\end{itemize}
\end{oss}

\subsection{Measure theoretic tools}

We shall use throughout the paper the notions of a measure theoretic closure $\overline{\Omega}^M$ and a measure theoretic boundary $\partial^M\Omega$ of a Lebesgue measurable set $\Omega\subset\R^d$, which are defined as:
\begin{equation*}
\overline{\Omega}^M=\left\{x\in\R^d:\ |B_r(x)\cap\Omega|>0,\ \forall r>0\right\},
\end{equation*}

\begin{equation*}
\partial^M\Omega=\left\{x\in\R^d:\ |B_r(x)\cap\Omega|>0,\ |B_r(x)\cap\Omega^c|>0,\ \forall r>0\right\}.
\end{equation*}

Moreover, for every $0\le\alpha\le1$, we define the set of points of density $\alpha$ as 
\begin{equation*}
\Omega_{(\alpha)}=\left\{x\in\R^d:\ \lim_{r\to0}\frac{|B_r(x)\cap\Omega|}{|B_r|}=\alpha\right\}.
\end{equation*}
If $\Omega$ has finite perimeter in sense of De Giorgi, i.e. the distributional gradient $\nabla 1_\Omega$ is a measure of finite total variation $|\nabla 1_\Omega|(\R^d)<+\infty$,  the generalized perimeter of $\Omega$ is given by 
$$P(\Omega)=|\nabla 1_\Omega|(\R^d)=\HH^{d-1}(\partial^*\Omega),$$
where $\partial^*\Omega$ is the reduced boundary of $\Omega$. 

The $s$-dimensional Hausdorff measure is denoted by $\HH^s$. To simplify notations and when no ambiguity occurs, we shall use the notation $|\partial B_r(x)|$ for the $(d-1)$ Hausdorff measure of the boundary of the ball centered in $x$ of radius $r$. 

\begin{oss}\label{canrepqo}
We note that the quasi-open sets are defined up to a set of zero capacity. We may define a canonical representative of the quasi-open set $\Omega$ as $\Omega=\{\widetilde{w}_\Omega>0\}$, where $\widetilde{w}_\Omega$ is the quasi-continuous representative of $w_\Omega$ defined as $0$, on the non-Lebesgue points for $w_\Omega$, and as the limit \eqref{pwdefu}, on the Lebesgue points for $w_\Omega$. With this identification, we have that
\begin{itemize}
\item each point $x\in\Omega$ is a Lebesgue point for $w_\Omega$;
\item the measure theoretical and the topological closure of $\Omega$ coincide $\overline\Omega=\overline{\Omega}^M$;
\item if $\Omega_1$ and $\Omega_2$ are two disjoint quasi-open sets, i.e. $\cp(\Omega_1\cap\Omega_2)=0$, then the measure theoretical and the topological common boundaries coincide $$\partial\Omega_1\cap\partial\Omega_2=\overline\Omega_1\cap\overline\Omega_2=
\overline{\Omega}_1^M\cap\overline{\Omega}_2^M=\partial^M\Omega_1\cap\partial^M\Omega_2.$$
\end{itemize}
\end{oss}

\subsection{Monotonicity theorems}

  We recall the following two-phase monotonicity formula due to Caffarelli, Jerison and Kenig \cite{cajeke}. 
\begin{teo}\label{mth2} {\rm\bf (Caffarelli, Jerison, Kenig)}
Let $u_1,u_2\in H^1(B_1)$ be two non-negative  functions such that $\Delta u_i\ge-1$, for $i=1,2$, and $\int_{\R^d} u_iu_j\,dx=0$. Then there is a dimensional constant $C_d$ such that for each $r\in(0,\frac12)$ we have
\begin{equation}\label{mth2e1}
\prod_{i=1}^2\left(\frac{1}{r^{2}}\int_{B_r}\frac{|\nabla u_i|^2}{|x|^{d-2}}\,dx\right)\le C_d\left(1+\sum_{i=1}^2\int_{B_1}\frac{|\nabla u_i|^2}{|x|^{d-2}}\,dx\right).
\end{equation} \end{teo}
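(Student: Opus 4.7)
The plan is to adapt the classical Alt--Caffarelli--Friedman monotonicity argument to the subharmonic case $\Delta u_i\ge -1$, following the scheme of Caffarelli--Jerison--Kenig. Write $\phi_i(r)=r^{-2}\int_{B_r}|x|^{2-d}|\nabla u_i|^2\,dx$ and $\Phi(r)=\phi_1(r)\phi_2(r)$; the goal is to show $\Phi(r)\le C_d(1+\phi_1(1)+\phi_2(1))$ for all $r\in(0,1/2)$. In the harmonic model case $\Delta u_i=0$, it is classical that $\Phi$ is non-decreasing, so that $\Phi(r)\le\Phi(1)$; the whole point is to absorb the right-hand side $-\Delta u_i\in[0,1]$ into an admissible error of order $r^2$.

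First I would establish the harmonic case. By the co-area formula one has $\phi_i'(r)/\phi_i(r)=(2/r)[-1+\beta_i(r)]$, where
\[
\beta_i(r)=\frac{\int_{\partial B_r}|\nabla u_i|^2|x|^{2-d}\,d\mathcal{H}^{d-1}}{(2/r)\int_{B_r}|\nabla u_i|^2|x|^{2-d}\,dx}.
\]
After expanding $u_i$ on $\partial B_r$ in the eigenbasis of the spherical Dirichlet Laplacian on $\Sigma_i(r)=\{u_i>0\}\cap\partial B_r$ and separating tangential from radial components, one shows $\beta_i(r)\ge \alpha_i(r)$ where $\alpha_i(r)$ is the characteristic exponent associated with the first eigenvalue $\lambda_1(\Sigma_i(r))$ on the unit sphere, i.e.\ $\alpha_i=\big(((d-2)/2)^2+\lambda_1(\Sigma_i(r))\big)^{1/2}-(d-2)/2$. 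The Friedland--Hayman inequality for disjoint subsets of $\partial B_1$ then yields $\alpha_1(r)+\alpha_2(r)\ge 2$, so $(\log\Phi)'(r)\ge 0$ and $\Phi$ is non-decreasing. This delivers the theorem when $\Delta u_i=0$.

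To handle $\Delta u_i\ge -1$, at each radius $r$ I replace $u_i$ by its harmonic extension $h_i$ of $u_i|_{\partial B_r}$ to $B_r$; note that $h_i\ge u_i$ in $B_r$ since $\Delta(h_i-u_i)\le 1$ with zero boundary data (combined with a Newtonian potential correction). Integration by parts against the weight $|x|^{2-d}$ shows
\[
\int_{B_r}|x|^{2-d}\bigl(|\nabla u_i|^2-|\nabla h_i|^2\bigr)\,dx = \int_{B_r}|x|^{2-d}(u_i-h_i)(-\Delta u_i)\,dx + (\text{boundary terms that vanish}),
\]
and since $|u_i-h_i|\lesssim r^2$ pointwise (by the maximum principle and $0\le -\Delta u_i\le 1$), the defect is controlled by $C_d r^2$. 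Applying the harmonic monotonicity to $h_i$ on scales $r<\rho<1/2$, inserting the $O(r^2)$ corrections, and then using a dyadic dichotomy (either $\phi_i(r)\le C_d$ already, in which case we are done directly, or the harmonic part dominates and monotonicity of the harmonic $\Phi$ takes over up to scale $1$) yields the desired bound $\Phi(r)\le C_d(1+\Phi(1))$ and absorbing $\Phi(1)\le \tfrac14(\phi_1(1)+\phi_2(1))^2$ into the right-hand side via AM-GM gives \eqref{mth2e1}.

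The main obstacle is precisely this last dichotomy: tracking how the $O(r^2)$ error from subharmonicity interacts with the multiplicative structure of $\Phi$ across dyadic scales, and verifying that the Friedland--Hayman-based monotonicity survives the replacement $u_i\mapsto h_i$ (in particular, that the spherical traces $\Sigma_i(r)$ remain disjoint in the relevant capacitary sense). Once one proves the clean a-priori bound $\phi_i(r)\le C_d(1+\phi_i(1))$ whenever the harmonic replacement fails, the rest of the argument is essentially the ACF scheme.
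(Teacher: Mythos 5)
First, note that the paper does not prove Theorem \ref{mth2} at all: it is quoted from Caffarelli--Jerison--Kenig, with the remark that the continuity hypothesis of the original can be dropped; the only proof in the paper is the Appendix's adaptation of the same scheme to the three-phase Lemma \ref{mth3} (Lemmas \ref{rem15}, \ref{lemma24}, \ref{lemma28} and the final dyadic summation). Your skeleton --- the functional $\Phi=\phi_1\phi_2$, the Friedland--Hayman input $\alpha_1+\alpha_2\ge2$, a differential inequality plus a dyadic dichotomy --- is the correct one and matches that scheme. But the step by which you pass from the harmonic model to $\Delta u_i\ge-1$ contains a genuine gap. You claim $|u_i-h_i|\lesssim r^2$ pointwise, where $h_i$ is the harmonic extension of $u_i|_{\partial B_r}$. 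Only the one-sided bound $u_i-h_i\le C_d r^2$ is true: $\Delta(u_i-h_i)=\Delta u_i\ge-1$ with zero boundary data gives an upper bound by the torsion function, but there is no lower bound, since $\Delta u_i$ may be an arbitrarily large positive measure (this is exactly the situation for the torsion functions $w_{\Omega_i}$ to which the lemma is applied, whose distributional Laplacians carry singular positive parts on the free boundary). Consequently the ``defect'' $\int_{B_r}|x|^{2-d}(u_i-h_i)\,d(-\Delta u_i)$ pairs a function unbounded below with a measure unbounded below, and the asserted $O(r^2)$ control does not follow; the integration-by-parts identity you write is also not correct as stated, since $h_i$ is harmonic only for the unweighted Laplacian, not for $\mathrm{div}(|x|^{2-d}\nabla\cdot)$.

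The mechanism that actually works (CJK, and Lemma \ref{lemma24} here) avoids harmonic replacement entirely: one integrates $\Delta(u_i^2)\ge 2|\nabla u_i|^2-2u_i$ against the weight $|x|^{2-d}$, uses $\Delta(|x|^{2-d})\le0$ and the sup bound $\sup_{B_{1/2}}u_i\le C_d+C_d\int_{\partial B_1}u_i\,d\HH^{d-1}$ coming from subharmonicity of $u_i+|x|^2/2d$, and obtains
\begin{equation*}
2A_i(1)\le C_d+C_d\sqrt{B_i(1)/\lambda_i}+B_i(1)/\alpha_i ,
\end{equation*}
so the inhomogeneity enters as an \emph{additive} error $C_d(1+\sqrt{B_i})$, which after division by $A_i$ produces the $C(A_1^{-1/2}+A_2^{-1/2})$ term in the logarithmic derivative of $\Phi$ --- not a multiplicative $O(r^2)$ perturbation. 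The resulting differential inequality only holds when both $A_i$ exceed a dimensional constant, and the conclusion then requires the discrete iteration of Lemma \ref{lemma28}, where the errors $\delta_k\lesssim (b_i^k)^{-1/2}$ are summable because $b_i^k$ grows geometrically off the good set; this is precisely the ``dichotomy'' you flag as the main obstacle but do not carry out. Two smaller points: the capacitary disjointness of the traces $\{u_i>0\}\cap\partial B_r$ (needed to invoke Friedland--Hayman without continuity) is resolved in the Appendix by showing $\cp_{d-1}(\{u_i>0\}\cap\{u_j>0\};\partial B_1)=0$ and that $A_i(1)\le C_d$ whenever $\{u_i>0\}$ misses $\partial B_1$ in capacity; and your final AM--GM absorption $\Phi(1)\le\frac14(\phi_1(1)+\phi_2(1))^2$ produces the \emph{square} of $1+A_1(1)+A_2(1)$ on the right-hand side, which is what the original CJK statement (and the paper's three-phase estimate) actually contains.
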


In \cite{cajeke},  Theorem \ref{mth2} was stated with the additional assumption that the functions $u_1$ and $u_2$ are continuous. An inspection of the original proof shows that this assumption is not necessary, as it will be seen in the proof of Lemma \ref{mth3}, in the Appendix.

The following monotonicity lemma is due to Conti, Terracini and Verzini and holds in two dimensions.
\begin{teo}\label{mth2.1} {\rm\bf (Conti, Terracini, Verzini)}
In $\R^2$, let $u_1,u_2,u_3 \in H^1(B_1)$ be three non-negative subharmonic functions such that  $\int_{\R^2} u_iu_j\,dx=0$. Then the function
\begin{equation}\label{mth2e1.1}
r\mapsto \prod_{i=1}^3\left(\frac{1}{r^{3}}\int_{B_r}|\nabla u_i|^2\,dx\right)
\end{equation} 
is nondecreasing on $[0,1]$.
\end{teo}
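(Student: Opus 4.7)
The plan is to prove non-negativity of the logarithmic derivative of the product $\Phi(r)=\prod_{i=1}^3\Phi_i(r)$, where $\Phi_i(r)=r^{-3}\int_{B_r}|\nabla u_i|^2\,dx$. Since for a.e.\ $r\in(0,1)$,
$$\frac{\Phi'(r)}{\Phi(r)} = -\frac{9}{r} + \sum_{i=1}^3 \frac{\int_{\partial B_r}|\nabla u_i|^2\,d\HH^1}{\int_{B_r}|\nabla u_i|^2\,dx},$$
the whole claim reduces to the pointwise (in $r$) inequality $\sum_{i=1}^3 r\int_{\partial B_r}|\nabla u_i|^2\,d\HH^1 \big/\int_{B_r}|\nabla u_i|^2\,dx \ge 9$. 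I would bound each summand from below in terms of the angular measure $\theta_i$ of the trace support $\{u_i>0\}\cap\partial B_r$, and then exploit the disjointness constraint $\theta_1+\theta_2+\theta_3\le 2\pi$.

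Fix $i$ and drop the subscript. Since $u\ge 0$ and $\Delta u\ge 0$, Green's identity gives $\int_{B_r}|\nabla u|^2\,dx \le \int_{\partial B_r} u\,\partial_\nu u\,d\HH^1$. Cauchy--Schwarz on the boundary, followed by the spherical Poincaré estimate
$$\int_{\partial B_r} u^2\,d\HH^1 \le \Bigl(\frac{r\theta}{\pi}\Bigr)^2 \int_{\partial B_r}|\nabla_\tau u|^2\,d\HH^1,$$
coming from the sharp Dirichlet eigenvalue $(\pi/\ell)^2$ on an arc of length $\ell$ (disconnected trace support is handled component-wise using $\pi/\ell_k\ge\pi/\ell$), and finally $2\sqrt{AB}\le A+B$ with $A=\int|\nabla_\tau u|^2$, $B=\int|\partial_\nu u|^2$, yields
$$\int_{B_r}|\nabla u|^2\,dx \le \frac{r\theta}{2\pi}\int_{\partial B_r}|\nabla u|^2\,d\HH^1, \quad \text{i.e.} \quad \frac{r\int_{\partial B_r}|\nabla u|^2}{\int_{B_r}|\nabla u|^2}\ge\frac{2\pi}{\theta},$$
where I have used the orthogonal decomposition $|\nabla u|^2=|\nabla_\tau u|^2+|\partial_\nu u|^2$ on $\partial B_r$.

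Summing over $i=1,2,3$ and applying the arithmetic--harmonic mean inequality $\sum_i 1/\theta_i \ge 9/(\theta_1+\theta_2+\theta_3) \ge 9/(2\pi)$ gives $\sum_i 2\pi/\theta_i \ge 9$, which closes the argument (with equality precisely for three equal $2\pi/3$--sectors, confirming sharpness). The main obstacle I foresee is the rigorous justification of Green's identity and of the boundary trace manipulations for $H^1$ subharmonic functions whose distributional Laplacian is only a non-negative Radon measure: this should be handled by approximating $u_i$ with $u_i\ast\rho_\eps$ (smooth and subharmonic) and, via coarea, selecting a full-measure set of radii on which the traces of $u_i$ and $\nabla u_i$ lie in $L^2(\partial B_r)$, so that all integrations by parts and Cauchy--Schwarz applications make sense and pass to the limit. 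The secondary delicate point, already mentioned, is the disconnected-support case for the Poincaré inequality, which is resolved by decomposing $u_i$ along the arc components of $\{u_i>0\}\cap\partial B_r$.
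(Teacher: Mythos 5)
The paper does not prove this statement: Theorem \ref{mth2.1} is recalled from Conti--Terracini--Verzini and quoted without proof, so there is no in-paper argument to match yours against. Your proof is correct and is essentially the original one: reduce monotonicity to the pointwise inequality $\sum_i r\int_{\partial B_r}|\nabla u_i|^2\,/\int_{B_r}|\nabla u_i|^2\ge 9$, obtain $rB_i/A_i\ge 2\pi/\theta_i$ from subharmonicity (Green), Cauchy--Schwarz, the sharp one-dimensional Dirichlet--Poincar\'e inequality on arcs, and the splitting $|\nabla u|^2=|\nabla_\tau u|^2+|\partial_\nu u|^2$, and close with $\sum_i\theta_i^{-1}\ge 9/\sum_i\theta_i\ge 9/2\pi$. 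It is worth contrasting this with what the paper \emph{does} prove, namely Lemma \ref{mth3}: there the functions only satisfy $\Delta u_i\ge -1$, so the clean differential inequality is polluted by additive error terms, the sharp arc eigenvalue is replaced by the optimal three-phase spectral partition of $\partial B_1$ (the bound $\alpha_1+\alpha_2+\alpha_3>3$, the higher-dimensional analogue of your $\sum 2\pi/\theta_i\ge 9$), and the conclusion is recovered only up to a non-sharp exponent $2+\eps$ via the Caffarelli--Jerison--Kenig discrete-scale iteration; your argument shows exactly which simplifications subharmonicity buys. Two small cautions, neither fatal: (a) mollification $u_i\ast\rho_\eps$ enlarges supports, so you should use it (or, more simply, testing $\Delta u_i\ge0$ against $u_i\eta_\delta$ with a radial cutoff) only to justify $\int_{B_r}|\nabla u_i|^2\le\int_{\partial B_r}u_i\partial_\nu u_i$, and apply the Poincar\'e inequality directly to the trace of the original $u_i$, which is continuous on $\partial B_r$ for a.e.\ $r$ and vanishes at the endpoints of each component arc; (b) you should dispose of the degenerate radii where some $\theta_i=0$ or $2\pi$ by noting that then some $A_j(r)=0$, forcing the product to vanish identically on $[0,r]$, so monotonicity is trivial there.
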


As in our problem the functions are not subharmonic, the argument we search is closer  to   Theorem \ref{mth2} than to Theorem  \ref{mth2.1}.  We give a multiphase monotonicity formula in the spirit of Theorem \ref{mth2}. We are not able to obtain optimal decreasing rates as in Theorem  \ref{mth2.1}, but the estimate below will be sufficient for our purposes and holds in any dimension of the space. 

\begin{lemma}[Three-phase monotonicity lemma]\label{mth3}
Let $u_i \in H^1(B_1)$, $i=1,2,3$, be three non-negative Sobolev functions such that $\Delta u_i\ge-1$, for each $i=1,2,3$, and $\int_{\R^d} u_iu_j\,dx=0$, for each $i\neq j$. Then there are dimensional constants $\eps>0$ and $C_d>0$ such that, for every $r\in(0,\frac12)$, we have 
\begin{equation}\label{mth3e1}
\prod_{i=1}^3\left(\frac{1}{r^{2+\eps}}\int_{B_r}\frac{|\nabla u_i|^2}{|x|^{d-2}}\,dx\right)\le C_d\left(1+\sum_{i=1}^3\int_{B_1}\frac{|\nabla u_i|^2}{|x|^{d-2}}\,dx\right).
\end{equation}   
\end{lemma}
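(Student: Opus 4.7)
The plan is to adapt the Alt--Caffarelli--Friedman scheme as extended by Caffarelli, Jerison and Kenig to the inhomogeneous condition $\Delta u_i \ge -1$, and to exploit the fact that three disjoint supports leave strictly more room on each sphere than two do, producing the extra gain $r^{\eps}$ over the classical two-phase exponent.

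\emph{Step 1: an improved Friedland--Hayman inequality for three sets.} The first task is to establish a dimensional constant $\eps > 0$ such that for every triple of pairwise disjoint measurable sets $E_1, E_2, E_3 \subset \partial B_1$ of positive $(d-1)$-capacity, the associated characteristic exponents satisfy
\[ \gamma(E_1) + \gamma(E_2) + \gamma(E_3) \ge 3 + 2\eps, \]
where $\gamma(E)$ is determined by $\gamma(\gamma + d - 2) = \lambda_1(\partial B_1 \setminus E)$ with $\lambda_1$ the first Dirichlet eigenvalue of the Laplace--Beltrami operator on $\partial B_1\setminus E$ (well-defined thanks to the Poincaré-type inequality recalled in Section~\ref{pr}). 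The classical Friedland--Hayman inequality applied to each pair already yields $\sum_i \gamma(E_i) \ge 3$. Strict improvement follows from a compactness argument: equality in any pair would force two of the $E_i$'s to be complementary hemispheres, impossible simultaneously for three mutually disjoint sets, so any minimizing sequence for $\sum_i\gamma(E_i)$ stays bounded away from the equality case.

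\emph{Step 2: ACF derivative inequality for $\Delta u_i \ge -1$ and combination.} For each $i$ set $\phi_i(r) := \int_{B_r} |\nabla u_i|^2 / |x|^{d-2}\, dx$. Following Caffarelli--Jerison--Kenig, one differentiates $\phi_i$ and, after integration by parts against the weight $|x|^{2-d}$ and application of the Rayleigh quotient on the slice $\{u_i > 0\} \cap \partial B_r$, one obtains for a.e. $r$
\[ \frac{d}{dr}\log\phi_i(r) \ge \frac{2\gamma_i(r)}{r} - R_i(r), \]
where $\gamma_i(r)$ is the characteristic exponent of $\{u_i > 0\} \cap \partial B_r$ and $R_i(r)$ is an error coming from the $-1$ on the right of $\Delta u_i \ge -1$, whose integral over $(0,1/2)$ is controlled by a power of $1 + \int_{B_1} |\nabla u_i|^2 / |x|^{d-2} dx$. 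Setting $\Psi_i(r) := r^{-(2+\eps)} \phi_i(r)$, summing and using Step 1 yields
\[ \frac{d}{dr}\log\prod_{i=1}^3 \Psi_i(r) \ge \frac{2}{r}\Bigl(\sum_i \gamma_i(r) - 3 - \tfrac{3\eps}{2}\Bigr) - \sum_i R_i(r) \ge \frac{\eps}{r} - \sum_i R_i(r), \]
so $\prod_i \Psi_i$ is almost non-decreasing up to an $L^1$ error. Integrating from $r$ to $1/2$, exponentiating, and bounding $\Psi_i(1/2)$ by a dimensional multiple of $\int_{B_1} |\nabla u_i|^2 / |x|^{d-2}\, dx$, one arrives at an estimate of the form \eqref{mth3e1}.

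\emph{Main obstacle.} The hard parts are twofold. First, making the three-set Friedland--Hayman improvement quantitative requires lower semicontinuity of $E \mapsto \gamma(E)$ under a suitable set convergence (e.g.\ $\gamma$-convergence on the sphere) together with an equality-case analysis, since a purely soft compactness argument gives existence but not an explicit $\eps$. Second, the original Caffarelli--Jerison--Kenig derivation assumes continuity of the $u_i$; this hypothesis must be removed and replaced by a good-slicing argument for the quasi-continuous representatives in $H^1(B_1)$, so that the boundary traces and integration by parts on $\partial B_r$ are meaningful for a.e.\ $r$ and the Rayleigh quotient step goes through without any pointwise regularity of $u_i$.
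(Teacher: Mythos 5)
Your overall architecture is the same as the paper's: a strict three-phase improvement of the Friedland--Hayman bound on the sphere, fed into a Caffarelli--Jerison--Kenig type derivative inequality for $\Phi(r)=r^{-(6+3\eps)}A_1(r)A_2(r)A_3(r)$, where $A_i(r)=\int_{B_r}|\nabla u_i|^2|x|^{2-d}dx$. Your Step 1 is essentially the paper's argument inside Lemma \ref{lemma24}: the minimum of $\alpha(\Omega_1)+\alpha(\Omega_2)+\alpha(\Omega_3)$ over quasi-open partitions of $\partial B_1$ is strictly larger than $3$, because equality $\alpha(\Omega_i)+\alpha(\Omega_j)=2$ in any pair forces $|\partial B_1\setminus(\Omega_i\cup\Omega_j)|=0$ and hence $\alpha(\Omega_k)=+\infty$ for the third set; this, plus existence of an optimal partition, gives a dimensional $\eps$. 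The continuity issue you flag is also resolved in the paper exactly along the lines you suggest (a capacity argument showing each $\{u_i>0\}$ meets $\partial B_r$ in positive $(d-1)$-capacity for the relevant radii, and that the traces have capacity-disjoint supports, so the spherical Poincar\'e inequality applies).

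The genuine gap is in Step 2, in the claim that the error $R_i(r)$ has integral over $(0,\frac12)$ controlled by a power of $1+\int_{B_1}|\nabla u_i|^2|x|^{2-d}dx$. After rescaling, the error in the logarithmic derivative at scale $r$ is of order $\frac{1}{r}\sum_i b_i(r)^{-1/2}$ with $b_i(r)=r^{-4}A_i(r)$, and there is no unconditional bound on this: the $A_i(r)$ may individually decay arbitrarily fast as $r\to0$ (indeed the whole point of the lemma is that their product decays at least like $r^{6+3\eps}$), so $\int_0^{1/2}\sum_i R_i(r)\,dr$ can diverge. Moreover the derivative inequality itself (Lemma \ref{lemma24}) is only valid under the a priori hypothesis $A_i(r)\ge C$ for all three indices, which is needed both to absorb the additive constants produced by $\Delta u_i\ge-1$ and to guarantee that each support meets the sphere. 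The resolution in the paper (and in CJK) is a dyadic dichotomy rather than a direct integration: at each scale $4^{-k}$ either one factor already dropped by $4^{-(2+3\eps)}$, in which case the monotonicity step at that scale is free (first case of Lemma \ref{lemma28}), or else one is at a ``bad'' scale $k\notin S$ where the desired inequality fails, and there the \emph{two-phase} Theorem \ref{mth2} forces $b_i^k\ge C^{-1}M4^{(2-3\eps)k}$ for every $i$, so the multiplicative errors $\delta_k\le C\sigma^k$ with $\sigma=4^{-1+3\eps/2}<1$ form a convergent product. Without this use of the two-phase theorem as an a priori lower bound on each $b_i^k$ at the scales that matter, your integration step does not close; you should replace the unconditional $L^1$ claim on $R_i$ by this conditional, scale-by-scale argument.
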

    The proof of this result follows the main arguments and steps of Theorem \ref{mth2}. For the convenience of the reader, we report it in the Appendix, with an emphasis on the technical differences brought by the lack of continuity and the presence of the third phase.

\section{Shape subsolutions for the torsional energy}\label{de}  
   In this section we study the quasi-open sets of finite measure which are minimal for the functional $E(\cdot)+m|\cdot|$, with respect to internal variations of the domain. Sets satisfying this property will be called energy subsolutions. More precisely, we give the following:
\begin{deff}\label{sub}
We say that the quasi-open set $\Omega\subset\R^d$ is a shape subsolution for the torsional energy (or, simply, energy subsolution), if there are real constants $m>0$ and $\varepsilon>0$ such that for each quasi-open set $\tilde\Omega\subset\Omega$ for which $d_\g(\widetilde\Omega,\Omega)<\varepsilon$, we have 
\begin{equation}\label{sub1}
E(\Omega)+m|\Omega|\le E(\tilde\Omega)+m|\tilde\Omega|.
\end{equation}
\end{deff}   
Inequality \eqref{sub1} is equivalent to say $2m|\Om \sm \tilde \Om| \le d_\g(\widetilde\Omega,\Omega) $ if $d_\g(\widetilde\Omega,\Omega)<\varepsilon$.
\begin{oss}
If $\Omega$ is an energy subsolution with constant $m$ and $m'\le m$, then $\Omega$ is also an energy subsolution with constant $m'$.
\end{oss}
\begin{oss}
We recall that if $\Omega\subset\R^d$ is a quasi-open set of finite measure and $t>0$ is a given real number, then we have 
$$w_{t\Omega}(x)=t^2w_\Omega(x/t)\qquad \hbox{and}\qquad E(t\Omega)=t^{d+2}E(\Omega).$$
Thus, if $\Omega$ is an energy subsolution with constants $m$ and $\varepsilon$, then $\Omega'=t\Omega$ is an energy subsolution with constants $m'=1$ and $\varepsilon'=\varepsilon t^{d+2}$, where $t=m^{-1/2}$.
\end{oss}   

\begin{oss}
If the energy subsolution $\Omega\subset\R^d$ is smooth, then writing the optimality condition for local perturbations of the domain $\Omega$ with smooth vector fields (see, for example, \cite[Chapter 5]{hepi05}) we obtain that
$$|\nabla w_\Omega|^2(x)\ge 2m,$$
for each $x\in\partial\Omega$.
\end{oss}

The energy subsolutions play an important role in the study of the optimal domains even for very general spectral optimization problems. In fact, in \cite{bulbk} the following Theorem was proved:  
 
\begin{teo}\label{gammalipth}
Let $k>0$ and let $\Omega\subset\R^d$ satisfy
\begin{equation}
\lambda_k(\Omega)+m|\Omega|\le \lambda_k(\widetilde\Omega)+m|\widetilde\Omega|,
\end{equation}
for each quasi-open set $\widetilde\Omega\subset\Omega$ such that $d_\g(\widetilde\Omega,\Omega)$ is small enough. Then $\Omega$ is an energy subsolution (for a possibly different constant $m$). 
\end{teo}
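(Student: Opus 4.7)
For nested quasi-open sets $\widetilde\Omega\subset\Omega$ the very definition of $d_\gamma$ gives the identity $2d_\gamma(\widetilde\Omega,\Omega)=E(\widetilde\Omega)-E(\Omega)$, and the hypothesis of the theorem reads
$$\lambda_k(\widetilde\Omega)-\lambda_k(\Omega)\ge m|\Omega\setminus\widetilde\Omega|.$$
Consequently, the $E$-subsolution property with a (possibly different) constant $m'=2m/C$ will follow at once provided we can establish the Lipschitz-type bound
\begin{equation}\label{planlip}
\lambda_k(\widetilde\Omega)-\lambda_k(\Omega)\le C\, d_\gamma(\widetilde\Omega,\Omega),
\end{equation}
for every quasi-open $\widetilde\Omega\subset\Omega$ with $d_\gamma(\widetilde\Omega,\Omega)$ sufficiently small, where $C$ is allowed to depend on $\lambda_k(\Omega)$, $|\Omega|$, $k$ and the dimension.

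\textbf{Proof of \eqref{planlip} via the resolvent.} Introduce the resolvent operator $R_\Omega\colon L^2(\R^d)\to L^2(\R^d)$ defined by $R_\Omega f=u$, where $u\in H^1_0(\Omega)$ solves $-\Delta u=f$ and is extended by zero outside $\Omega$. It is compact, positive and self-adjoint, with eigenvalues $\mu_i(R_\Omega)=1/\lambda_i(\Omega)$. Let $u_1,\ldots,u_k$ be $L^2$-orthonormal eigenfunctions of $R_\Omega$ corresponding to $\lambda_1(\Omega)\le\ldots\le\lambda_k(\Omega)$. Because the $u_i$, extended by zero, are distributional subsolutions of $-\Delta v=\lambda_k v$ on $\R^d$, Moser iteration provides a uniform bound $\|u_i\|_{L^\infty}\le M$, with $M$ depending only on $d$, $|\Omega|$ and $\lambda_k(\Omega)$. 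The key pointwise comparison is
\begin{equation}\label{planpoint}
|R_\Omega u_i-R_{\widetilde\Omega}u_i|\le M\,(w_\Omega-w_{\widetilde\Omega}),
\end{equation}
which follows by splitting $u_i=u_i^+-u_i^-$ and applying the weak maximum principle to the non-negative functions $Mw_\Omega-R_\Omega u_i^{\pm}$ (and the analogous $\widetilde\Omega$-counterparts), together with the monotonicity $0\le R_{\widetilde\Omega}f\le R_\Omega f$ valid for $f\ge 0$, which is an immediate consequence of the variational characterisation of $R_\Omega f$.

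Integrating \eqref{planpoint} against $u_j$ gives
$$|\langle(R_\Omega-R_{\widetilde\Omega})u_i,u_j\rangle|\le M\int|u_j|(w_\Omega-w_{\widetilde\Omega})\,dx\le 2M^2\, d_\gamma(\widetilde\Omega,\Omega),$$
so the Gram matrix $G_{ij}=\langle R_{\widetilde\Omega}u_i,u_j\rangle$ equals $\mathrm{diag}(1/\lambda_1(\Omega),\ldots,1/\lambda_k(\Omega))$ up to an entrywise $O(d_\gamma)$ perturbation. Using $\mathrm{span}(u_1,\ldots,u_k)$ as a test subspace in the min--max characterisation of $\mu_k(R_{\widetilde\Omega})$ and invoking standard matrix perturbation bounds then yields
$$\mu_k(R_{\widetilde\Omega})\ge \frac{1}{\lambda_k(\Omega)}-2kM^2\, d_\gamma(\widetilde\Omega,\Omega),$$
which, once $d_\gamma$ is small enough to keep the right-hand side above $1/(2\lambda_k(\Omega))$, inverts to $\lambda_k(\widetilde\Omega)\le\lambda_k(\Omega)+4kM^2\lambda_k(\Omega)^2\, d_\gamma(\widetilde\Omega,\Omega)$; this is precisely \eqref{planlip}.

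\textbf{Main obstacle.} The delicate point is the uniform $L^\infty$ bound $M$ on the first $k$ eigenfunctions in terms of $\lambda_k(\Omega)$ alone (the subdomain $\widetilde\Omega$ must play no role in $M$). Moser iteration carries over to the quasi-open setting but has to be performed on the canonical quasi-continuous representatives of the $u_i$, using that $H^1_0(\Omega)$ is a lattice so that truncations stay admissible. Once this uniform bound is in place and the maximum principle is invoked for quasi-open $\Omega$ to get \eqref{planpoint}, the remaining resolvent/matrix-perturbation calculation is routine.
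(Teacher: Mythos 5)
Your proposal is correct and follows essentially the same route as the proof this theorem actually receives in the cited reference \cite{bulbk} (the paper itself does not reprove Theorem \ref{gammalipth}): one shows $\lambda_k$ is $\gamma$-Lipschitz on subdomains via the uniform bound $\|u_i\|_{L^\infty}\le C_d\lambda_k(\Omega)^{d/4}$ on the eigenfunctions of $\Omega$ (classical, and indeed independent of $\widetilde\Omega$ since only eigenfunctions of $\Omega$ are involved, so the ``main obstacle'' you flag is not one), the pointwise resolvent comparison, and min--max, and then combines this with the hypothesis and the identity relating $d_\gamma$ to $E(\widetilde\Omega)-E(\Omega)$ for nested sets. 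The only point to tidy is the derivation of the pointwise bound $|R_\Omega u_i-R_{\widetilde\Omega}u_i|\le M(w_\Omega-w_{\widetilde\Omega})$: the clean argument applies domain monotonicity to the non-negative data $M-u_i^{\pm}$, namely
$M(w_\Omega-w_{\widetilde\Omega})-\bigl(R_\Omega u_i^{\pm}-R_{\widetilde\Omega}u_i^{\pm}\bigr)=R_\Omega(M-u_i^{\pm})-R_{\widetilde\Omega}(M-u_i^{\pm})\ge 0$, rather than the maximum principle for $Mw_\Omega-R_\Omega u_i^{\pm}$ alone, which only yields the one-sided bound $R_\Omega u_i^{\pm}\le Mw_\Omega$.
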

In particular, using this result, in \cite{bulbk} and \cite{bubuve}, was proved boundedness of the optimal sets of some spectral optimization problems. In this section, we exploit the notion of a subsolution differently, obtaining an inner density estimate, which we use later in Section \ref{s5} to study the solutions of general multiphase problems.

 Lemmas \ref{sopra}  and \ref{bnd} below are implicitly contained in the paper of Alt and Caffarelli \cite[Lemma 3.4]{altcaf}. We adapt them in the context of shape subsolutions of  the  torsional energy and rephrase them in two separate statements. For the sake of completeness we report here the proofs.
\begin{lemma}\label{sopra}
Let $\Omega\subset\R^d$ be an energy subsolution with constant $m$ and let $w=w_\Omega$. Then there exist constants $C_d$, depending only on the dimension $d$, and $r_0$, depending on $\vps$, such that for each $x_0\in\R^d$ and each $0<r<r_0$ we have the following inequality:
\begin{equation}\label{sopra1}
\begin{array}{ll}
\ds \frac{1}{2}\int_{B_r(x_0)}|\nabla w|^2\,dx+m|B_r(x_0)\cap \{w>0\}|\\
\\
\qquad\qquad\qquad\le \ds \int_{B_r(x_0)}w\,dx+ C_d\left(r+\frac{\|w\|_{L^\infty(B_{2r}(x_0))}}{2r}\right)\int_{\partial B_r(x_0)}w\,d\HH^{d-1},
\end{array}
\end{equation}
\end{lemma}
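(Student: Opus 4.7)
My plan is to test the subsolution property \eqref{sub1} against the competitor $v := w\,\phi$, where $\phi$ is a smooth radial cutoff with $\phi\equiv 0$ on $B_r(x_0)$ and $\phi\equiv 1$ outside $B_{r+\delta}(x_0)$ for some small $\delta>0$, $|\nabla\phi|\le 2/\delta$, and then to combine the resulting estimate with integration by parts for $w$ on $B_r(x_0)$. Since $0\le v\le w$, the quasi-open set $\widetilde\Om:=\{v>0\}$ lies inside $\Om\setminus B_r(x_0)$, so $|\Om|-|\widetilde\Om|\ge|B_r(x_0)\cap\Om|$. The choice $r_0=r_0(\vps)$ will be dictated by the requirement $d_\g(\widetilde\Om,\Om)<\vps$, which follows from a direct estimate showing $\int (w-v)\to 0$ as $r\to 0$.

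The key identity I would use is $\mathcal{E}(v)-\mathcal{E}(w)=\tfrac12\int_{\R^d}|\nabla(v-w)|^2$ (where $\mathcal E(u)=\tfrac12\int|\nabla u|^2-\int u$), which follows from $-\Delta w=\mathbf{1}_\Om$ together with the fact that $v-w\in H^1_0(\Om)$: expanding $|\nabla v|^2-|\nabla w|^2=2\nabla w\cdot\nabla(v-w)+|\nabla(v-w)|^2$ and using $\int\nabla w\cdot\nabla(v-w)=\int(v-w)\mathbf 1_\Om$ makes the cross term and the linear term $\int(w-v)$ cancel exactly. Combined with $E(\widetilde\Om)\le\mathcal{E}(v)$ and \eqref{sub1}, this rearranges to
$$m\,|B_r(x_0)\cap\Om|\le\tfrac12\int_{\R^d}|\nabla(v-w)|^2.$$

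I would then expand $v-w=(\phi-1)w$ and integrate by parts in the cross term $\int(\phi-1)\,\nabla w^2\cdot\nabla\phi$. The jump of $\nabla\phi$ across $\partial B_r(x_0)$ (treated via the radial structure of $\phi$) produces, after routine manipulations, the bound
$$\tfrac12\int_{\R^d}|\nabla(v-w)|^2\le\tfrac12\int_{B_r(x_0)}|\nabla w|^2+\frac{1}{2\delta}\int_{\partial B_r(x_0)}w^2\,d\HH^{d-1}+(\text{annular remainder}),$$
where the remainder is a weighted integral of $w^2$ and $|\nabla w|^2$ over $B_{r+\delta}\setminus B_r$. Taking $\delta\sim r$ and bounding $w\le M:=\|w\|_{L^\infty(B_{2r})}$ on $\partial B_r(x_0)$ turns the boundary term into $\tfrac{M}{2r}\int_{\partial B_r}w\,d\HH^{d-1}$, and Caccioppoli-type estimates for $w$ on $B_{2r}$ absorb the remainder into $C_d\,r\int_{\partial B_r}w\,d\HH^{d-1}$. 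Moving $\tfrac12\int_{B_r}|\nabla w|^2$ to the left-hand side of the resulting estimate and invoking the Green identity $\int_{B_r(x_0)}|\nabla w|^2=\int_{B_r(x_0)}w+\int_{\partial B_r(x_0)}w\,\partial_\nu w\,d\HH^{d-1}$ (from $-\Delta w=\mathbf 1_\Om$) together with the gradient bound $|\partial_\nu w|\le C_d(M/r+r)$ on $\partial B_r(x_0)$ (obtained by decomposing $w$ on $B_{2r}$ as a harmonic function plus the torsion function of $B_{2r}$) then yields \eqref{sopra1}.

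The main obstacle will be the precise handling of the annular terms produced by the cutoff $\phi$: the crude pointwise bound $|\nabla(v-w)|^2\le 2(\phi-1)^2|\nabla w|^2+2w^2|\nabla\phi|^2$ leads to a boundary contribution that diverges as $\delta\to 0^+$, and one genuinely needs the integration-by-parts identity above (which converts the jump of $\nabla\phi$ into the surface integral $\int_{\partial B_r}w^2$) in order to recover, after optimizing $\delta\sim r$, the surface integral $\int_{\partial B_r}w$ that appears on the right-hand side of \eqref{sopra1}.
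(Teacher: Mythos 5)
Your overall strategy -- test the subsolution inequality against a competitor supported in $\Om\setminus \overline B_r(x_0)$, use $E(\widetilde\Om)\le \mathcal E(v)$ together with the exact identity $\mathcal E(v)-\mathcal E(w)=\tfrac12\int|\nabla(v-w)|^2$ for $v-w\in H^1_0(\Om)$ -- is sound and is the same first move as the paper's. The gap is in how you estimate the excess energy, and it is structural, not cosmetic. With the competitor $v=w\phi$, the contribution of $B_r$ to $\tfrac12\int|\nabla(v-w)|^2$ is exactly $\tfrac12\int_{B_r}|\nabla w|^2$, which sits on the \emph{wrong} side of the inequality; to flip it you invoke Green's identity $\int_{B_r}|\nabla w|^2=\int_{B_r}w+\int_{\partial B_r}w\,\partial_\nu w$ and then the pointwise bound $|\partial_\nu w|\le C_d(M/r+r)$ on $\partial B_r$. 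That bound is not available. Your justification (``$w$ is a harmonic function plus the torsion function of $B_{2r}$'') is false: $-\Delta w=1$ only on $\Om$, and $\Delta w+1$ is a nonnegative measure charging $\partial\Om\cap B_{2r}$ which can be very singular, so $w-\varphi_{2r}$ is merely subharmonic and no gradient estimate follows. Lipschitz continuity of the state function near the free boundary is a deep result for \emph{minimizers} (Brian\c{c}on--Hayouni--Pierre) and is precisely what one cannot assume for a bare subsolution; note also that Lemma \ref{bnd} gives a \emph{lower} bound on the growth of $w$, not an upper one. The alternative route of absorbing everything into the annular term fares no better: integrating the cross term by parts reproduces $\int_{\partial B_{r+\delta}}w\,\partial_\nu w$ on the outer sphere, and a Caccioppoli bound for $\int_A|\nabla w|^2$ leads to $\int_{\partial B_{2r}}w$ rather than the $\int_{\partial B_r}w$ required by \eqref{sopra1}.

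The paper avoids all of this with a different competitor: on the annulus $A_r=B_{2r}\setminus\overline B_r$ it takes $w\wedge v$, where $v$ is the \emph{explicit} solution of $-\Delta v=1$ with $v=0$ on $\partial B_r$ and $v=\alpha=\|w\|_{L^\infty(B_{2r})}$ on $\partial B_{2r}$, given by scaling as $v(x)=r^2\phi(x/r)+\alpha\psi(x/r)$. After the convexity inequality $\tfrac12(|\nabla w_r|^2-|\nabla w|^2)\le\nabla w_r\cdot\nabla(w_r-w)$, the integration by parts lands every derivative on $v$: its Laplacian is $-1$ (which cancels the volume term on the annulus) and its normal derivative on $\partial B_r$ is bounded by $r\|\nabla\phi\|_\infty+\tfrac{\alpha}{r}\|\nabla\psi\|_\infty$ purely by the scaling structure. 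No regularity of $w$ beyond $H^1$ is ever used. To repair your argument you should replace the cutoff $w\phi$ by this barrier construction (or an equivalent device that transfers the normal derivative onto an explicit function).
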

\begin{proof}
Without loss of generality, we can suppose that $x_0=0$. We denote with $B_r$ the ball of radius $r$ centered in $0$ and with $A_r$ the annulus $B_{2r}\setminus \overline{B_{r}}$.\\
Let $\psi:A_1\rightarrow\R^{+}$ be the solution of the equation:
$$\Delta \psi=0,\ \hbox{on}\ A_1,\qquad \psi=0,\ \hbox{on}\ \partial B_{1},\qquad\psi=1,\ \hbox{on}\ \partial B_{2}.$$
We can also give the explicit form of $\psi$, but for our purposes, it is enough to know that $\psi$ is bounded and positive.\\
   With $\phi:A_1\rightarrow\R^{+}$ we denote the solution of the equation:
$$-\Delta \phi=1,\ \hbox{on}\ A_1,\qquad\phi=0,\ \hbox{on}\ \partial B_{1},\qquad\phi=0,\ \hbox{on}\ \partial B_{2}.$$
For an arbitrary $r>0$, $\alpha>0$ and $k>0$, we have that the solution $v$ of the equation
$$-\Delta v=1,\ \hbox{on}\ A_r,\qquad v=0,\ \hbox{on}\ \partial B_{1},\qquad v=\alpha,\ \hbox{on}\ \partial B_{2},$$
is given by 
\begin{equation}\label{sopra5}
v(x)=r^2\phi(x/r)+\alpha\psi(x/r),
\end{equation}
and it's gradient is of the form 
\begin{equation}\label{sopra6}
\nabla v(x)=r(\nabla\phi)(x/r)+\frac{\alpha}{r}\nabla\psi(x/r).
\end{equation}

Let $v$ be as in \ref{sopra5} with $\alpha\ge \|w\|_{L^{\infty}(B_{2r})}$. Consider the function $w_r=w I_{B_{2r}^c}+w\wedge v I_{B_{2r}}$ and note that, by the choice of $\alpha$, we have that $w_r\in H^1_0(\Dr)$ and denote with $\Omega_r$ the quasi-open set $\{w_r>0\}=\Omega\setminus\overline{B_{r}}$. Since $\Omega$ is an energy subsolution, choosing $r$ small enough, we have the inequality
$$\frac{1}{2}\int_{\Omega}|\nabla w|^2\,dx-\int_\Omega w(x)\,dx+m|\{w>0\}|\le \frac{1}{2}\int_{\Omega}|\nabla w_r|^2\,dx-\int_{\Omega} w_r(x)\,dx+m|\{w_r>0\}|.$$
Since $w_r=0$ in $B_{r}$ and $w_r=w$ in $(B_{2r})^c$, we have that
\begin{equation}
\begin{array}{ll}
\frac{1}{2}\int_{B_{r}}|\nabla w|^2\,dx+m|B_{r}\cap\{w>0\}|&\le \frac{1}{2}\int_{A_r}|\nabla w_r|^2-|\nabla w|^2\,dx+\int_{B_{2r}}(w-w_r)\,dx\\
\\
&\le \int_{A_r}\nabla w_r\nabla (w_r-w)\,dx+\int_{B_{2r}}(w-w_r)\,dx\\
\\
&=-\int_{A_r}\nabla v\nabla ((w-v)^+)\,dx+\int_{B_{2r}}(w-v)^+\,dx\\
\\
&=\int_{\partial B_{r}}w\frac{\partial v}{\partial n}\,d\mathcal{H}^{d-1}+\int_{B_{r}}w\,dx\\
\\
&\le \Big(r\|\nabla\phi\|_{\infty}+\frac{\alpha}{r}\|\nabla\psi\|_{\infty}\Big)\int_{\partial B_{r}}w\,d\mathcal{H}^{d-1}+\int_{B_{r}}w\,dx,
\end{array}
\end{equation}
where the last inequality is due to \eqref{sopra6}. Taking $\alpha=\|w\|_{L^\infty(B_{2r})}$, we have the claim.
\end{proof}

\begin{lemma}\label{bnd}
Let $\Omega\subset\R^d$ be an energy subsolution with constant $1$ and let $w=w_\Omega$. Then there exist constants $C_d>0$ (depending only on the dimension) and $r_0>0$ (depending on the dimension and on $\eps$ from Definition \ref{sub}) such that for every $x_0\in\R^d$ and $0<r<r_0$ the following implication holds:
\begin{equation}
\Big(\|w\|_{L^{\infty}(B_r(x_0))}\leq C_d r\Big) \Rightarrow \Big(w=0\ on\ B_{r/2}(x_0)\Big).
\label{bnd1}
\end{equation} 
\end{lemma}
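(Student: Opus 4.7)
Plan: I adapt the interior density scheme from Alt and Caffarelli \cite[Lemma~3.4]{altcaf}. Denote $M := \|w\|_{L^\infty(B_r(x_0))}$ and $V(\rho) := |B_\rho(x_0) \cap \{w>0\}|$ for $\rho \in (0,r]$. The first step is to apply Lemma~\ref{sopra} (with $m=1$) on $B_\rho(x_0)$ for an arbitrary $\rho \in [r/2, r]$. The pointwise bound $w \le M$ yields $\int_{B_\rho} w \le M\,V(\rho)$ and, via the coarea formula, $\int_{\partial B_\rho} w \le M\,V'(\rho)$; meanwhile, the hypothesis $M \le C_d r$ keeps the prefactor $\rho + \|w\|_{L^\infty(B_{2\rho})}/(2\rho)$ in \eqref{sopra1} bounded by an absolute constant on $[r/2,r]$.

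Substituting these estimates into \eqref{sopra1} and absorbing the term $MV(\rho)$ on the left-hand side (valid once $M < 1/2$, which is guaranteed by choosing $C_d$ and $r_0$ sufficiently small), I obtain a differential inequality of the form
\begin{equation*}
V(\rho) \le C(d)\, C_d\, r\, V'(\rho), \qquad \rho \in [r/2,r],
\end{equation*}
equivalently $(\log V)'(\rho) \ge 1/(C(d)\, C_d\, r)$. Integrating this over $[r/2, r]$ and using the trivial bound $V(r) \le \omega_d r^d$ yields
\begin{equation*}
V(r/2) \;\le\; \omega_d\, r^d \exp\!\Bigl(-\tfrac{1}{2 C(d)\, C_d}\Bigr),
\end{equation*}
which can be made arbitrarily small by choosing $C_d$ small in terms of the dimension.

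To pass from this "small-density" bound to the pointwise statement $V(r/2) = 0$, the same argument is iterated at dyadic sub-scales: for any $y \in B_{r/2}(x_0)$, the inclusion $B_{r/2}(y) \subset B_r(x_0)$ transfers the smallness hypothesis to a ball centered at $y$, and combining the density estimate with the subharmonicity of $w(\cdot) + |\cdot - y|^2/(2d)$ (which follows from $\Delta w \ge -1$) via the mean-value inequality forces $w(y) = 0$. The main obstacle is the delicate bookkeeping of absolute constants: $C_d$ must be fixed once and for all, purely in terms of $d$, so that the absorption step $M < 1/2$ is legitimate at every scale, the prefactor in \eqref{sopra1} stays uniformly bounded under the dyadic iteration, and the exponential decay of the one-step density estimate is strong enough to upgrade "small density" to $w \equiv 0$ on $B_{r/2}(x_0)$.
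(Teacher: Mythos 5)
Your first step contains a fixable slip and your second step contains the real gap. The slip: applying Lemma \ref{sopra} at radius $\rho\in[r/2,r]$ requires controlling $\|w\|_{L^\infty(B_{2\rho}(x_0))}$, i.e.\ the supremum on balls up to $B_{2r}(x_0)$, which the hypothesis $\|w\|_{L^\infty(B_r(x_0))}\le C_d r$ does not provide; you would have to work on $[r/4,r/2]$ instead. The gap: even granting the differential inequality $V(\rho)\le C(d)\,C_d\,r\,V'(\rho)$, integrating it over one dyadic ring only yields $V(r/2)\le e^{-1/(2C(d)C_d)}V(r)$, a \emph{small but a priori positive} density; nothing in this inequality forces $V(r/2)=0$ (functions such as $V(\rho)=(\rho-\rho_*)_+^N$ with $N$ large satisfy it). The entire burden of the lemma therefore falls on your final paragraph, which is only a sketch: re-centering at $y\in B_{r/2}(x_0)$ doubles the constant in the hypothesis (one only gets $\|w\|_{L^\infty(B_{r/2}(y))}\le C_d r=2C_d\cdot(r/2)$), and at scale $\rho$ the prefactor $\rho+\|w\|_{L^\infty(B_{2\rho})}/(2\rho)$ in \eqref{sopra1} grows like $C_d r/\rho$, so the one-step density gain degenerates along the dyadic cascade. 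You correctly identify this bookkeeping as ``the main obstacle'' but do not resolve it, so the proof is not complete as written.

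The paper closes the argument in one step, with no iteration, by keeping precisely the term you discard. By the $W^{1,1}$ trace inequality and Young's inequality,
\begin{equation*}
\int_{\partial B_{r/2}}w\,d\HH^{d-1}\le C_d\Big(\frac{2}{r}\|w\|_{L^\infty(B_{r/2})}+\frac12\Big)\Big(\frac12\int_{B_{r/2}}|\nabla w|^2\,dx+|\{w>0\}\cap B_{r/2}|\Big),
\end{equation*}
that is, the boundary integral is controlled by the full energy $E(w,B_{r/2})$ appearing on the left of \eqref{sopra1}, not merely by $M\,V'(\rho)$. Substituting this into \eqref{sopra1} at radius $r/2$ gives $E(w,B_{r/2})\le\theta\,E(w,B_{r/2})$ with $\theta<1$ whenever $\|w\|_{L^\infty(B_r)}\le cr$ with $c$ and $r$ small, whence $E(w,B_{r/2})=0$ and in particular $|\{w>0\}\cap B_{r/2}|=0$, i.e.\ $w\equiv0$ on $B_{r/2}(x_0)$ exactly. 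If you wish to salvage your route, the sup-decay iteration $\|w\|_{L^\infty(B_{r4^{-k}}(y))}\le C\,r4^{-k}$ via the mean value inequality for $w+|x|^2/(2d)$ can likely be made to work, but the quantitative step you defer is the heart of the proof and must be written out with the constants tracked.
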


\begin{proof}
Without loss of generality, we can assume that $x_0=0$. By the trace theorem for $W^{1,1}$ functions (see \cite[Theorems 3.87 and 3.88]{amfupa}), we have that 
\begin{equation}\label{bnd2}
\begin{array}{ll}
\int_{\partial B_{r/2}}w\,d\mathcal{H}^{d-1}&\le C_d\left(\frac{2}{r}\int_{B_{r/2}}w(x)\,dx+\int_{B_{r/2}}|\nabla w|\,dx\right)\\
\\
&\le C_d\left(\frac{2}{r}\int_{B_{r/2}}w(x)\,dx+\frac{1}{2}\int_{B_{r/2}}|\nabla w|^2\,dx+\frac{1}{2}|\{w>0\}\cap B_{r/2}|\right)\\
\\
&\le 2C_d\left(\frac{2}{r}\|w\|_{L^{\infty}(B_{r/2})}+\frac{1}{2}\right)\left(\frac{1}{2}\int_{B_{r/2}}|\nabla w|^2\,dx+|\{u>0\}\cap B_{r/2}|\right),
\end{array}
\end{equation}
where the constant $C_d>0$ depends only on the dimension $d$.\\
   We define the energy of $w$ on the ball $B_r$ as
\begin{equation}
E(w,B_{r})=\frac{1}{2}\int_{B_{r}}|\nabla w|^2\,dx+|B_{r}\cap \{w>0\}|.
\label{bnd3}
\end{equation}

Combining \eqref{bnd2} with the estimate from Lemma \ref{sopra1}, we have

\begin{equation}\label{bnd4}
\begin{array}{ll}
E(w,B_{r/2})\le \int_{B_{r/2}}w\,dx+C_d\left(r+\frac{2}{r}\|w\|_{L^\infty(B_{r})}\right)\int_{\partial B_{r/2}}w\,d\mathcal{H}^{d-1}\\
\\
\ \ \ \le \left(\|w\|_{L^{\infty}(B_{r/2})}+C_d\left(\frac{2}{r}\|w\|_{L^{\infty}(B_{r/2})}+\frac{1}{2}\right)\left(r+\frac{1}{r}\|w\|_{L^\infty(B_{r})}\right)\right)E(w,B_{r/2}),
\end{array}
\end{equation}
where the constants $C_d$ depend only on the dimension $d$. The claim follows by observing that if 
$$\|w\|_{L^{\infty}(B_r)}\le cr,$$
for some small $c$ and $r$, then we obtain a contradiction in \eqref{bnd4}.
\end{proof}

In other words, Lemma \ref{bnd} says that in a point of $\overline \Omega^M$  (the measure theoretic closure of the energy subsolution $\Omega$)
the function $w_\Omega$ has at least linear growth. In particular, the maximum of $w_\Omega$ on $B_r(x)$ and the average on $\partial B_r(x)$ are comparable for $r>0$ small enough.   
 
\begin{cor}\label{est}
Suppose that $\Omega\subset\R^d$ is an energy subsolution with $m=1$ and let $w=w_\Omega$. Then there exists $r_0>0$, depending on the dimension and the constant $\eps$ from Definition \ref{sub}, such that for every $x_0\in\overline\Omega^M$ and every $0<r<r_0$, we have
\begin{equation}\label{est1}
2^{-d-2}\|w\|_{L^\infty(B_r(x_0))}\le\frac{1}{|\partial B_{2r}|}\int_{\partial B_{2r}(x_0)}w\,d\HH^{d-1}\le \|w\|_{L^\infty(B_{2r}(x_0))}.
\end{equation} 
\end{cor}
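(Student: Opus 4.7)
The right-hand inequality in \eqref{est1} is immediate, since the spherical average of $w$ on $\partial B_{2r}(x_0)$ is bounded by its essential supremum on $B_{2r}(x_0)$, so I focus on the left-hand inequality. The plan is to exploit the subharmonicity of the shifted function
$$v(x):=w(x)+\frac{|x-x_0|^2}{2d},$$
which, using the standard fact that $-\Delta w\le 1$ on $\R^d$ in the distributional sense (implicit in Lemmas \ref{mth2} and \ref{mth3}), satisfies $-\Delta v\le 0$; hence $v$ is non-negative and subharmonic. This will yield a mean-value-type comparison between $\|w\|_{L^\infty(B_r(x_0))}$ and the spherical average on $\partial B_{2r}(x_0)$, up to a quadratic error that I then absorb via Lemma \ref{bnd}.

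First I pass to the upper semicontinuous representative of $v$ and apply the classical pointwise ball mean value inequality at any $y\in B_r(x_0)$. Since $B_r(y)\subset B_{2r}(x_0)$ and $v\ge 0$,
$$v(y)\le\frac{1}{|B_r|}\int_{B_r(y)} v\,dx\le\frac{1}{|B_r|}\int_{B_{2r}(x_0)} v\,dx.$$
The monotonicity of $s\mapsto|\partial B_s|^{-1}\int_{\partial B_s(x_0)} v\,d\HH^{d-1}$ in $s$, combined with the coarea formula, gives $\int_{B_{2r}(x_0)} v\,dx\le|B_{2r}|\cdot|\partial B_{2r}|^{-1}\int_{\partial B_{2r}(x_0)} v\,d\HH^{d-1}$. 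Taking the essential supremum over $y\in B_r(x_0)$ and using $|B_{2r}|/|B_r|=2^d$ yields
$$\|v\|_{L^\infty(B_r(x_0))}\le 2^d\cdot\frac{1}{|\partial B_{2r}|}\int_{\partial B_{2r}(x_0)} v\,d\HH^{d-1}.$$

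Translating back to $w$, using $|x-x_0|^2/(2d)\le r^2/(2d)$ on $B_r(x_0)$ and that this quantity averages to exactly $2r^2/d$ on $\partial B_{2r}(x_0)$, this becomes
$$\|w\|_{L^\infty(B_r(x_0))}\le 2^d\cdot\frac{1}{|\partial B_{2r}|}\int_{\partial B_{2r}(x_0)} w\,d\HH^{d-1}+\frac{2^{d+1}r^2}{d}.$$
The main obstacle is eliminating this quadratic error, and it is handled by Lemma \ref{bnd}: since $x_0\in\overline{\Omega}^M$, the set $B_{r/2}(x_0)\cap\Omega$ has positive measure, so the canonical representative of $w$ is not a.e.\ zero on $B_{r/2}(x_0)$; the contrapositive of Lemma \ref{bnd} then forces $\|w\|_{L^\infty(B_r(x_0))}>C_d r$ for all $r$ below the threshold of that lemma. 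For $r_0$ small enough (a dimensional constant depending on the $C_d$ of Lemma \ref{bnd}), the $O(r^2)$ term is then dominated by $\tfrac34\cdot 2^{-d}\|w\|_{L^\infty(B_r(x_0))}$ and can be absorbed on the left, giving the claimed estimate with constant $2^{d+2}$. The only residual technical point is the pointwise use of subharmonicity, which is resolved by working with the canonical upper semicontinuous representative of $v$.
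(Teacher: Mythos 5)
Your proof is correct and follows essentially the same strategy as the paper's: both correct for the failure of subharmonicity by an explicit function with Laplacian $\pm 1$ (you add $|x-x_0|^2/2d$ to $w$, the paper subtracts $\varphi_{2r}$ from $w$), compare with the spherical average on $\partial B_{2r}(x_0)$, and absorb the resulting lower-order error using the linear lower bound $\|w\|_{L^\infty(B_r(x_0))}\ge C_d r$ from Lemma \ref{bnd}. The only cosmetic difference is that you use the solid mean value inequality together with monotonicity of spherical means where the paper uses the Poisson kernel representation of the harmonic replacement.
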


\begin{proof}
Suppose that $x_0=0$. Since $w$ is positive and satisfies $\Delta w+1=0$ on $\{w>0\}=\Omega$, we have that $\Delta w+1\ge0$ on $\Dr$ (see, for example, \cite{budm93}). Consider the function 
$$\varphi_{2r}(x)=\frac{\left((2r)^d-|x|^d\right)^+}{2d},$$
solution of the equation 
$$\Delta \varphi_{2r}+1=0,\qquad \varphi_{2r}\in H^1_0(B_{2r}).$$ 
Then, we have that $\|\varphi_{2r}\|_\infty\le d^{-1}2^{d-1}r^d$ and $\Delta(w-\varphi_{2r})\ge 0$ on the ball $B_{2r}$.Thus, comparing $w-\varphi_{2r}$ with the harmonic function on $B_{2r}$ with boundary values $w$, we obtain that for every $x\in B_{r}$, we have
\begin{equation}
\begin{array}{ll}
w(x)-\varphi_{2r}(x)&\le \frac{4r^2-|x|^2}{d\omega_d 2r}\int_{\partial B_{2r}}\frac{w(y)}{|y-x|^d}\,d\HH^{d-1}(y)\\
\\
&\le \frac{2^d}{|\partial B_{2r}|}\int_{\partial B_{2r}}w\,d\HH^{d-1}.
\end{array}
\end{equation}
In particular if, for $0<r<\min\{r_0,\frac{d}{2^{d+1}}C_d,1\}$, where $r_0$ and $C_d$ are the constants from Lemma \ref{bnd}, we choose $x_r\in B_r$ such that 
$$w(x_r)>\frac{1}{2}\|w\|_{L^{\infty}(B_r)}>\frac{rC_d}{2},$$
where $C_d$ is the constant from Lemma \ref{bnd}, then we have 
\begin{equation}
\begin{array}{ll}
\frac{1}{2}\|w\|_{L^\infty(B_r)}\le w(x_r)
&\le\frac{2^d}{|\partial B_{2r}|}\int_{\partial B_{2r}}w\,d\HH^{d-1}+\frac{2^{d-1}r^d}{d}\\
\\
&\le\frac{2^d}{|\partial B_{2r}|}\int_{\partial B_{2r}}w\,d\HH^{d-1}+\frac{2^{d-1}r^{d-1}}{d}\frac{d}{2^{d+1}}C_d\\
\\
&\le\frac{2^d}{|\partial B_{2r}|}\int_{\partial B_{2r}}w\,d\HH^{d-1}+\frac{
r^{d-2}}{4}\|w\|_{L^\infty(B_r)},
\end{array}
\end{equation}
and so, the claim.
\end{proof}

\begin{oss}
In particular, there are constants $c$ and $r_0$ such that if $x_0\in\overline\Omega^M$, then for every $0<r\le r_0$, we have that
$$cr\le \frac{1}{|\partial B_r|}\int_{\partial B_r(x_0)}w_\Omega\,d\HH^{d-1}.$$
Moreover, since $\int_{B_r}w_\Omega\,dx=\int_0^r\int_{\partial B_s}w_\Omega\,d\HH^{d-1}\,ds$, we have
$$cr\le \frac{1}{|B_r|}\int_{B_r(x_0)}w_\Omega\,dx.$$
\end{oss}

As a consequence of Corollary \ref{est}, we can simplify \eqref{sopra1}. Precisely, we have the following result.

\begin{cor}\label{enest}
Suppose that $\Omega\subset\R^d$ is an energy subsolution with $m=1$ and let $w:=w_\Omega$. Then there are constants $C_d>0$, depending only on the dimension $d$, and $r_0$, depending on $d$ and $\eps$ from Definition \ref{sub}, such that for every $x_0\in\overline\Omega^M$ and $0<r<r_0$, we have
\begin{equation}\label{enest1}
\frac{1}{2}\int_{B_r(x_0)}|\nabla w|^2\,dx+|\{w>0\}\cap B_r(x_0)|\le C_d\frac{\|w\|_{L^\infty(B_{2r}(x_0))}}{2r}\int_{\partial B_r(x_0)}w\,d\HH^{d-1}.
\end{equation}
\end{cor}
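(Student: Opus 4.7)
The plan is to start from the inequality of Lemma~\ref{sopra} with $m=1$,
\begin{equation*}
\frac12\int_{B_r(x_0)}|\nabla w|^2\,dx+|\{w>0\}\cap B_r(x_0)|\le \int_{B_r(x_0)} w\,dx + C_d\Bigl(r+\frac{\|w\|_{L^\infty(B_{2r}(x_0))}}{2r}\Bigr)\int_{\partial B_r(x_0)} w\,d\HH^{d-1},
\end{equation*}
and then to absorb both the bulk term $\int_{B_r} w\,dx$ and the extra boundary contribution $C_d\,r\int_{\partial B_r} w$ into the single term $C_d\,\frac{\|w\|_{L^\infty(B_{2r})}}{2r}\int_{\partial B_r} w$. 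The hypothesis $x_0\in\overline{\Omega}^M$ is exactly what activates the non-degeneracy estimates supplied by Lemma~\ref{bnd} and Corollary~\ref{est} around $x_0$.

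For the first of these absorptions, the requirement reduces to $r\le \|w\|_{L^\infty(B_{2r}(x_0))}/(2r)$. Since $x_0$ lies in the measure-theoretic closure of $\Omega$, the contrapositive of Lemma~\ref{bnd} applied at radius $2r$ forces $\|w\|_{L^\infty(B_{2r}(x_0))} > 2C_d\,r$ for every sufficiently small $r$; in particular $\|w\|_{L^\infty(B_{2r})}/(2r)$ is bounded below by the dimensional constant $C_d$, and choosing $r_0$ small enough gives the desired comparison.

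For the bulk term I would use the crude upper bound
$\int_{B_r(x_0)} w\,dx \le \omega_d\,r^d\,\|w\|_{L^\infty(B_{2r}(x_0))}$, so that it suffices to establish the lower bound $\int_{\partial B_r(x_0)} w\,d\HH^{d-1}\gtrsim r^d$. This is obtained by applying Corollary~\ref{est} at radius $r/2$, namely $\int_{\partial B_r} w\,d\HH^{d-1}\ge 2^{-d-2}|\partial B_r|\,\|w\|_{L^\infty(B_{r/2}(x_0))}$, combined once more with Lemma~\ref{bnd} at radius $r/2$ to yield $\|w\|_{L^\infty(B_{r/2}(x_0))}\ge C_d\,r/2$. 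Multiplying through and using $r\le r_0\le 1$ produces $\int_{B_r} w\,dx\le C_d'\,\frac{\|w\|_{L^\infty(B_{2r})}}{2r}\int_{\partial B_r} w\,d\HH^{d-1}$, which closes the argument.

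There is no real obstacle here; the corollary is essentially a cosmetic consolidation of Lemma~\ref{sopra} in the presence of the non-degeneracy of $w$ at points of $\overline\Omega^M$. The only care is choosing $r_0$ small enough (depending only on $d$ and on the constant $\varepsilon$ of Definition~\ref{sub}) so that Lemma~\ref{sopra}, Lemma~\ref{bnd} (at radii $r/2$ and $2r$) and Corollary~\ref{est} (at radius $r/2$) may all be applied simultaneously around $x_0$.
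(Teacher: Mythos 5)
Your argument is correct and follows essentially the same route as the paper: both start from Lemma \ref{sopra}, use the contrapositive of Lemma \ref{bnd} at a point of $\overline\Omega^M$ to bound $\|w\|_{L^\infty(B_{2r})}/(2r)$ from below by a dimensional constant (absorbing the term $C_d\,r\int_{\partial B_r}w$), and use Corollary \ref{est} together with Lemma \ref{bnd} to bound the boundary average of $w$ from below by $c_d r$ (absorbing the bulk term $\int_{B_r}w$). The choice of $r_0$ depending only on $d$ and $\eps$ is handled the same way in both.
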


\begin{proof}
By Lemma \ref{bnd} and Corollary \ref{est}, for $r>0$ small enough, we have
\begin{equation}\label{enest2}
\frac{1}{r}\|w\|_{L^\infty(B_r(x_0))}\ge C_d,\qquad\frac{1}{r|\partial B_r|}\int_{\partial B_{r}(x_0)}w\,d\HH^{d-1}\ge 2^{-d-2}C_d .
\end{equation}
Thus, for $r$ as above, we have 
\begin{equation}
\int_{B_r(x_0)}w(x)\,dx\le |B_r|\frac{d2^{-d-2}C_d}{r}\|w\|_{L^\infty(B_r(x_0))}\le \frac{1}{r}\|w\|_{L^\infty(B_r(x_0))}\int_{\partial B_r(x_0)}w\,d\HH^{d-1}, 
\end{equation}
and so, it remains to apply the above estimate to \eqref{sopra1}.
\end{proof}

Relying on inequality \eqref{enest1} and Lemma \ref{bnd} we get the following inner density estimate, which is much weaker than the density estimates from \cite{altcaf}. The main reason is that we work only with subsolutions and not with minimizers of a free boundary problem. 

\begin{prop}\label{dens}
Suppose that $\Omega\subset\R^d$ is an energy subsolution and let $w=w_\Omega$. Then there exists a constant $c>0$, depending only on the dimension, such that for every $x_0\in\overline\Omega^M$, we have
\begin{equation}\label{dens0}
\limsup_{r\to 0}\frac{|\{w>0\}\cap B_{r}(x_0)|}{|B_{r}|}\ge c.
\end{equation}

%
\end{prop}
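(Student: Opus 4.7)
The plan is to combine the lower bound on boundary averages coming from Corollary~\ref{est} with the non-degeneracy of Lemma~\ref{bnd}, and then leverage the trivial estimate $\int_{B_R}w\,dx\le \|w\|_{L^\infty(B_R)}\,|\{w>0\}\cap B_R|$ to transfer the pointwise growth of $w$ into a measure estimate. Specifically, Corollary~\ref{est} gives for every $s<r_0$
$$\int_{\partial B_s(x_0)}w\,d\HH^{d-1}\ge 2^{-d-2}\,\|w\|_{L^\infty(B_{s/2}(x_0))}\,|\partial B_s|,$$
and Lemma~\ref{bnd} (applicable since $x_0\in\overline\Omega^M$) provides the dimensional lower bound $\|w\|_{L^\infty(B_{s/2}(x_0))}\ge C_d s/2$. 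Together these yield $\int_{\partial B_s(x_0)}w\,d\HH^{d-1}\ge c_d\,s^d$ for $s$ small.

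Integrating this in $s$ and using the slicing identity $\int_{B_R}w\,dx=\int_0^R\int_{\partial B_s}w\,d\HH^{d-1}\,ds$ gives $\int_{B_R(x_0)}w\,dx\ge c'_d\,R^{d+1}$. Set $V(R)=|\{w>0\}\cap B_R(x_0)|$ and $M_R=\|w\|_{L^\infty(B_R(x_0))}$; since $w\le M_R$ on $B_R$ and $w=0$ outside $\{w>0\}$, one gets $\int_{B_R}w\,dx\le M_R\,V(R)$, hence
$$\frac{V(R)}{|B_R|}\ge \frac{c''_d\,R}{M_R}.$$
Consequently, to prove the desired dimensional lower bound on the $\limsup$ it suffices to produce a sequence $R_n\to 0$ along which $M_{R_n}/R_n$ is bounded above by a dimensional constant.

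The main obstacle is exactly this upper estimate on $M_R/R$ along some sequence; note that $M_R/R\ge C_d$ by Lemma~\ref{bnd}, so what is needed is a complementary dimensional ceiling at infinitely many scales. The natural way to obtain this is to combine the just-proved lower bound $V(R)\ge c'_d R^{d+1}/M_R$ with Corollary~\ref{enest} used \emph{in the opposite direction}: the trivial bound $\int_{\partial B_R}w\,d\HH^{d-1}\le M_R|\partial B_R|$ inserted into \eqref{enest1} gives
$$V(R)\le C_d'''\,M_R\,M_{2R}\,R^{d-2}.$$
Comparing the two estimates produces $M_R^2\,M_{2R}\ge c_d''''\,R^3$, i.e.\ setting $a(R):=M_R/R$ one has the recursive inequality $a(R)^2\,a(2R)\ge \gamma_d$ with a dimensional $\gamma_d>0$. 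Since $a$ is bounded above by $\|w\|_{\infty}/R$ (so cannot grow slower than $R^{-1}$) while being bounded below by $C_d$, a log-iteration on the dyadic scales $R_k=2^{-k}R_0$ forces $a(R_k)$ to be comparable to a dimensional constant along a subsequence; on this subsequence the displayed inequality for $V(R)/|B_R|$ gives the claim with $c=c''_d/L_d\omega_d$.

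The step most likely to resist a clean routine argument is the extraction of the subsequence where $M_R/R$ is controlled from above by a purely dimensional constant; the structural inequality $a(R)^2 a(2R)\ge \gamma_d$, combined with the trivial monotonicity and the bound $M_R\le\|w\|_\infty$, should do it, but care is needed so that the resulting constant depends only on $d$ (and not on $\|w\|_\infty$, the size of $D$, or the parameter $\varepsilon$ in Definition~\ref{sub}). An alternative, cleaner route, which also yields a $\liminf$ version of the estimate, would be to invoke Lipschitz continuity of $w_\Omega$ for energy subsolutions (a counterpart to Lemma~\ref{bnd} that can be proved by the same Alt--Caffarelli comparison method used for Lemma~\ref{sopra}), in which case $M_R\le L_d R$ and the density bound follows immediately from the first two paragraphs.
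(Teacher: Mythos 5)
There is a genuine gap at exactly the point you flag as delicate, and the repair you propose does not work. Your first two paragraphs are fine and give $\frac{|\{w>0\}\cap B_R|}{|B_R|}\ge c''_d\,R/M_R$, so everything hinges on finding a sequence $R_n\to0$ with $M_{R_n}/R_n$ bounded above by a dimensional constant. But the structural inequality you extract, $a(R)^2a(2R)\ge\gamma_d$ with $a(R)=M_R/R$, is a \emph{lower} bound on $a$; it is already implied by Lemma~\ref{bnd} (which gives $a(R)\ge C_d$ pointwise) and carries no information whatsoever capping $a$ from above. For instance $a(R)=R^{-1/2}$ satisfies your inequality at every scale while tending to $+\infty$, so no ``log-iteration'' can force $a(R_k)$ down to a dimensional constant along a subsequence. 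The alternative route via Lipschitz continuity of $w_\Omega$ is also unavailable: the Alt--Caffarelli argument for the upper bound $M_R\le L_dR$ compares $w$ with its harmonic replacement in $B_R$, which is an \emph{outward} perturbation (it may enlarge $\{w>0\}$), and subsolutions only admit inward competitors. This is precisely why the paper's density estimate is weaker than the Alt--Caffarelli one, as the authors remark before Proposition~\ref{dens}.

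The paper's proof circumvents the need for any upper bound on $M_R/R$. Starting from the same ingredients (the $W^{1,1}$ trace inequality, Cauchy--Schwarz, Corollary~\ref{enest} and Corollary~\ref{est}), it arrives at the multiplicative recursion
\begin{equation*}
\frac{\|w\|_{L^\infty(B_{r/2})}}{r/2}\;\le\;\frac{C_d\,|\{w>0\}\cap B_r|}{|B_r|}\cdot\frac{\|w\|_{L^\infty(B_{2r})}}{2r},
\end{equation*}
in which the density appears as the \emph{gain factor} between the two scales $2r$ and $r/2$. If the density were below $(2C_d)^{-1}$ for all small $r$, iterating along $r4^{-n}$ would force $f(r4^{-n})=\|w\|_{L^\infty(B_{r4^{-n}})}/(r4^{-n})\to0$ geometrically, contradicting only the lower bound $f\ge C_d$ of Lemma~\ref{bnd}. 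The key difference from your scheme is that the density bounds a \emph{ratio} $f(r/2)/f(2r)$ rather than the reciprocal $1/f(R)$, so only non-degeneracy (never regularity) of $w$ is needed. You should rework the step after your slicing estimate so as to keep the boundary integral $\int_{\partial B_r}w$ (controlled from below by $\|w\|_{L^\infty(B_{r/2})}$ via Corollary~\ref{est}) on the left-hand side, rather than replacing it by the crude bound $M_R|\partial B_R|$.
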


\begin{proof}
Without loss of generality, we can suppose that $x_0=0$ and by rescaling we can assume that $m=1$. Let $r_0$ and $C_d$ be as in Lemma \ref{bnd} and let $0<r<r_0$. By the Trace Theorem in $W^{1,1}(B_r)$, we have
\begin{equation}\label{dens1}
\begin{array}{ll}
\int_{\partial B_r}w\,d\HH^{d-1}&\le C_d\left(\int_{B_r}|\nabla w|\,dx+\frac{1}{r}\int_{B_r}w\,dx\right)\\
\\
&\le C_d\left(\left(\int_{B_r}|\nabla w|^2\,dx\right)^{1/2}|\{w>0\}\cap B_r|^{1/2}+\frac{\|w\|_{L^\infty(B_r)}}{r}|\{w>0\}\cap B_r|\right)\\
\\
&\le C_d\left(\frac{\|w\|_{L^\infty(B_{2r})}}{2r}\int_{\partial B_r}w\,d\HH^{d-1}\right)^{1/2}|\{w>0\}\cap B_r|^{1/2}\\
\\
&\qquad\qquad+C_d\frac{\|w\|_{L^\infty(B_r)}}{r}|\{w>0\}\cap B_r|,
\end{array}
\end{equation}
where the last inequality is 
due to Corollary \ref{enest} and $C_d$ denotes a constant which depends only on the dimension $d$. Let 
\begin{equation}
\begin{array}{lcl}
X&=&\left(\int_{\partial B_r}w\,d\HH^{d-1}\right)^{1/2},\\
\\
\alpha&=&C_d\left(\frac{\|w\|_{L^\infty(B_{2r})}}{2r}\right)^{1/2}|\{w>0\}\cap B_r|^{1/2},\\
\\
\beta&=&C_d\frac{\|w\|_{L^\infty(B_r)}}{r}|\{w>0\}\cap B_r|.
\end{array}
\end{equation}
Then, we can rewrite \eqref{dens1} as 
$$X^2\le \alpha X+\beta.$$
But then, since $\alpha,\beta>0$, we have the estimate $X\le\alpha+\sqrt\beta$. Taking the square of both sides, we obtain
\begin{equation}\label{dens2}
\begin{array}{ll}
\int_{\partial B_r}w\,d\HH^{d-1}&\le C_d |\{w>0\}\cap B_r| \left(\frac{\|w\|_{L^\infty(B_{2r})}}{2r}+\frac{\|w\|_{L^\infty(B_{r})}}{r}\right)\\
\\
&\le 3C_d |\{w>0\}\cap B_r|\frac{\|w\|_{L^\infty(B_{2r})}}{2r}.
\end{array}
\end{equation}
By Corollary \ref{est}, we have that
\begin{equation}\label{dens3}
\frac{\|w\|_{L^\infty(B_{r/2})}}{r/2}\le \frac{C_d |\{w>0\}\cap B_r|}{|B_r|}\frac{\|w\|_{L^\infty(B_{2r})}}{2r},
\end{equation} 
for some constant $C_d$ depending only on the dimension. We choose the constant $c$ from \eqref{dens0} as $c=(2C_d)^{-1}$ and we will prove that \eqref{dens0} does not hold. Suppose, by absurd, that we have 
\begin{equation}\label{dens4}
\limsup_{r\to 0}\ C_d\frac{|\{w>0\}\cap B_{r}|}{|B_{r}|}<\frac{1}{2}.
\end{equation}
Setting, for $r>0$ small enough,   
$$f(r):=\frac{\|w\|_{L^\infty(B_{r})}}{r},$$
and using \eqref{dens3}, we have that for each $n\in\N$ the following inequality holds 
\begin{equation}\label{dens5}
f(r4^{-(n+1)})\le \frac{C_d|\{w>0\}\cap B_{2r4^{-(n+1)}}|}{|B_{2r4^{-(n+1)}}|}f(r4^{-n}),
\end{equation}
and so
\begin{equation}\label{p5}
f(r4^{-(n+1)})\le f(r)\prod_{k=0}^n\frac{C_d|\{w>0\}\cap B_{2r4^{-(k+1)}}|}{|B_{2r4^{-(k+1)}}|}.
\end{equation}

By equation \eqref{dens4}, we have that $f(r4^{-n})\to 0$, which is a contradiction with Lemma \ref{bnd}.
\end{proof}

\begin{teo}\label{thss}
Suppose that the quasi-open set $\Omega\subset\R^d$ is an energy subsolution with constant $m>0$. Then, we have that:
\begin{enumerate}[(i)]
\item $\Omega$ is a bounded set;
\item $\Omega$ is of finite perimeter and 
\begin{equation}
\sqrt{\frac{m}{2}}{\mathcal H}^{d-1} (\partial ^*\Omega)\le|\Omega|;
\end{equation} 
\item $\Omega$ is equivalent a.e. to a closed set. More precisely, $\Omega=\overline{\Omega}^M$ a.e., $\overline\Omega^M=\R^d\setminus\Omega_{(0)}$ and $\Omega_{(0)}$ is an open set. Moreover, if $\Omega$ is given through its canonical representative from Remark \ref{canrepqo}, then $\overline\Omega=\overline\Omega^M$. 
\end{enumerate}
\end{teo}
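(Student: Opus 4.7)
The plan is to prove the three assertions in order, using Proposition~\ref{dens} and the remark following Corollary~\ref{est} as the main tools, and writing $w := w_\Omega$. For part (i), the remark after Corollary~\ref{est} supplies constants $c, r_0 > 0$ such that $\int_{B_{r_0}(x_0)}w\,dx \ge c\,r_0\,|B_{r_0}|$ for every $x_0 \in \overline{\Omega}^M$. Since $|\Omega| < \infty$, Sobolev embedding forces $w \in L^1(\R^d)$. If $\overline{\Omega}^M$ were unbounded, one could extract $\{x_n\} \subset \overline{\Omega}^M$ with $|x_n - x_m| > 2r_0$, making the balls $B_{r_0}(x_n)$ pairwise disjoint; summing the uniform lower bound would force $\int w\,dx = +\infty$, a contradiction. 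Thus $\overline{\Omega}^M$, and therefore $\Omega$, is bounded.

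For part (ii), I would test the subsolution inequality with $\widetilde{\Omega}_t = \{w > t\}$ for small $t > 0$. Since $-\Delta w = 1$ on $\Omega$, the uniqueness of the torsional minimizer identifies $w_{\widetilde{\Omega}_t} = (w - t)^+$, so Definition~\ref{sub} gives
\[
2m\,|\{0 < w \le t\}| \le \int_{\R^d}\min(w, t)\,dx = \int_0^t |\{w > s\}|\,ds \le t\,|\Omega|.
\]
Setting $F(t) := |\{0 < w \le t\}|$ and differentiating in $t$, I obtain $2mF'(t) \le |\{w > t\}| \le |\Omega|$ a.e. Independently, testing the weak form $-\Delta w = \mathbf{1}_\Omega$ against $\min(w/t, 1) \in H^1_0(\Omega)$, then differentiating in $t$ via the coarea formula, yields for a.e.\ small $t$ the identity
\[
\int_{\{w = t\}}|\nabla w|\,d\HH^{d-1} = |\{w > t\}|.
\]
Cauchy--Schwarz on the level set $\{w = t\}$ together with the coarea expression $F'(t) = \int_{\{w = t\}}|\nabla w|^{-1}\,d\HH^{d-1}$ then gives
\[
\HH^{d-1}(\{w = t\})^2 \le F'(t)\int_{\{w = t\}}|\nabla w|\,d\HH^{d-1} = F'(t)\,|\{w > t\}| \le \frac{|\Omega|^2}{2m},
\]
hence $P(\{w > t\}) = \HH^{d-1}(\{w = t\}) \le |\Omega|/\sqrt{2m}$ for a.e.\ small $t$. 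As $t \downarrow 0$, $\mathbf{1}_{\{w > t\}} \to \mathbf{1}_\Omega$ in $L^1$, so by lower semicontinuity of perimeter $\sqrt{2m}\,\HH^{d-1}(\partial^*\Omega) \le |\Omega|$, which is stronger than the asserted bound.

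For part (iii), Proposition~\ref{dens} shows that every $x \in \overline{\Omega}^M$ has $\limsup_{r \to 0}|\Omega \cap B_r(x)|/|B_r| \ge c > 0$, so $x \notin \Omega_{(0)}$; conversely, any $x \notin \overline{\Omega}^M$ admits a ball $B_{r_0}(x)$ disjoint from $\Omega$ and hence contained in $\Omega_{(0)}$. This yields $\Omega_{(0)} = \R^d \setminus \overline{\Omega}^M$, which is open since $\overline{\Omega}^M$ is always closed. The a.e.\ equality $\Omega = \overline{\Omega}^M$ then follows from the Lebesgue differentiation theorem: a.e.\ point of $\Omega$ has density $1$ and hence lies in $\overline{\Omega}^M$, and a.e.\ point of $\overline{\Omega}^M$ has density $0$ or $1$, but density $0$ is excluded by the density estimate, so it has density $1$ and lies in $\Omega$. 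For the canonical representative, $\overline{\Omega}^M \subset \overline{\Omega}$ is immediate; conversely, any $x \in \overline{\Omega}$ is approached by points $x_n$ with $\widetilde{w}_\Omega(x_n) > 0$, and the pointwise definition~\eqref{pwdefu} forces $|\Omega \cap B_{\rho_n}(x_n)| > 0$ for some small $\rho_n$. Choosing $n$ large so that $B_{\rho_n}(x_n) \subset B_r(x)$ gives $|\Omega \cap B_r(x)| > 0$ for every fixed $r$, whence $x \in \overline{\Omega}^M$.

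The main technical obstacle is the coarea-based identity in (ii): extracting $\int_{\{w = t\}}|\nabla w|\,d\HH^{d-1} = |\{w > t\}|$ requires careful use of the weak formulation of $-\Delta w = 1$ with a test function chosen precisely so that any distributional boundary mass on $\partial\Omega$ is killed, and this must be carried out in the absence of any a priori regularity of $\partial\Omega$; the Cauchy--Schwarz step then depends on interpreting both the coarea formula and the subsolution inequality at every slice $\{w = t\}$ for almost every small $t$.
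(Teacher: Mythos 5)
Parts (i) and (iii) are sound. Your (iii) is the same argument as the paper's (Proposition~\ref{dens} gives $\overline\Omega^M\cap\Omega_{(0)}=\emptyset$, the reverse inclusion is trivial, and the rest follows from Lebesgue differentiation and the pointwise definition \eqref{pwdefu}); for (i) and (ii) the paper simply cites \cite[Theorem 2.2]{bulbk}, so you are reconstructing those proofs, and your (i) (disjoint balls around separated points of $\overline\Omega^M$, each carrying a uniform amount of $\int w\,dx$ by the remark after Corollary~\ref{est}) is exactly the standard argument, modulo the harmless rescaling to $m=1$.

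The genuine gap is in (ii), at the step ``Setting $F(t):=|\{0<w\le t\}|$ and differentiating in $t$, I obtain $2mF'(t)\le|\{w>t\}|$ a.e.'' You cannot differentiate an inequality between two nondecreasing functions that both vanish at $0$: $F\le G$ with $F(0)=G(0)=0$ does not imply $F'\le G'$ a.e. (take $G(t)=t$ and $F(t)=2(t-\tfrac12)^+$ on $[0,1]$). Since your Cauchy--Schwarz step needs precisely the pointwise bound $F'(t)\le |\Omega|/(2m)$, the chain breaks there. The conclusion is nevertheless reachable, because you only need the perimeter bound along \emph{some} sequence $t_n\to0$: from $2mF(t)\le t|\Omega|$ and $\int_0^t F'(s)\,ds\le F(t)$, Chebyshev gives, for every $t$, a set of $s\in(0,t)$ of measure at least $t/2$ on which $F'(s)\le |\Omega|/m$; combining with your slicing inequality $\HH^{d-1}(\{w=s\})^2\le F'(s)\,|\{w>s\}|$ and lower semicontinuity of the perimeter as $s_n\downarrow 0$ recovers $\sqrt{m}\,\HH^{d-1}(\partial^*\Omega)\le|\Omega|$, which implies the stated bound. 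A cleaner route, and the one used in \cite{bulbk} and mirrored in the proof of Theorem~\ref{thsslb1} here, is to keep the gradient term: testing with $\{w>t\}$ actually yields
\begin{equation*}
\frac12\int_{\{0<w\le t\}}|\nabla w|^2\,dx+m|\{0<w\le t\}|\le \int_\Omega\min(w,t)\,dx\le t|\Omega|,
\end{equation*}
and then the arithmetic--geometric mean inequality together with Cauchy--Schwarz bounds $\sqrt{2m}\int_{\{0<w\le t\}}|\nabla w|\,dx\le 2t|\Omega|$, so the coarea formula controls the \emph{average} of $P(\{w>s\})$ over $s\in(0,t)$ directly, with no pointwise differentiation of $F$ needed.
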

\begin{proof}
The first two statements    concerning the boundedness and the perimeter of $\Omega$ were implicitly proved in \cite[Theorem 2.2]{bulbk}. For the third one it is sufficient to prove that $\Omega_{(0)}$ satisfies
\begin{equation}\label{openss1303013}
\Omega_{(0)}=\R^d\setminus\overline\Omega^M=\left\{x\in\R^d:\ \hbox{exists}\ r>0\ \hbox{such that}\ |B_r(x)\cap\Omega|=0\right\},
\end{equation}
where the second equality is just the definition of $\overline\Omega^M$. We note that $\Omega_{(0)}\subset\R^d\setminus\overline\Omega^M$ trivially holds for every measurable $\Omega$. On the other hand, if $x\in\overline\Omega^M$, then, by Proposition \ref{dens}, there is a sequence $r_n\to0$ such that 
$$\lim_{n\to\infty}\frac{|B_{r_n}(x)\cap\Omega|}{|B_{r_n}|}\ge c>0,$$
and so $x\notin\Omega_{(0)}$, which proves the opposite inclusion and the equality in \eqref{openss1303013}. 
\end{proof}

\begin{oss}
The second statement of Theorem \ref{thss} implies, in particular, that the energy subsolutions cannot be too small. Indeed, by the isoperimetric inequality, we have 
$$c_d\sqrt{\frac{m}{2}}|\Omega|^{\frac{d-1}{d}}\le \sqrt{\frac{m}{2}}{\mathcal H}^{d-1} (\partial ^*\Omega)\le |\Omega|\le C_d [{\mathcal H}^{d-1} (\partial ^*\Omega)]^{\frac{d}{d-1}},$$
and so
$$c_dm^{\frac{d}{2}}\le |\Omega|,\qquad c_dm^{\frac{d-1}{2}}\le {\mathcal H}^{d-1} (\partial ^*\Omega),$$
for some dimensional constant $c_d$. 
\end{oss}

The results of this section can be adapted to the subsolutions for first Dirichlet eigenvalue, i.e. the quasi-open sets $\Omega\subset\R^d$ such that there are real constants $m>0$ and $\varepsilon>0$ such that for each quasi-open set $\tilde\Omega\subset\Omega$, for which $d_\g(\widetilde\Omega,\Omega)<\varepsilon$, we have 
\begin{equation}\label{sub1.1}
\lambda_1(\Omega)+m|\Omega|\le \lambda_1(\tilde\Omega)+m|\tilde\Omega|.
\end{equation}
Subsolutions for the first Dirichlet eigenvalue are also subsolutions for the energy, so Theorem  \ref{gammalipth} applies. Moreover, we have the following new, or more precise, statements.
\begin{teo}\label{thsslb1}
Suppose that the quasi-open set $\Omega\subset\R^d$ is a subsolution  for the first eigenvalue of the Dirichlet Laplacian. Then, we have that:
\begin{enumerate}[(i)]
\item 
\begin{equation}\label{thsslb1e1}
\sqrt{m}{\mathcal H}^{d-1} (\partial ^*\Omega)\le\lambda_1(\Omega)|\Omega|^{1/2};
\end{equation} 
\item $\Omega$ is quasi-connected, i.e. if $A,B\subset\Omega$ are two quasi-open sets such that $A\cup B=\Omega$ and $\cp(A\cap B)=0$, then $\cp(A)=0$ or $\cp(B)=0$;
\item $\Omega=\{u>0\}$, up to a set of zero capacity, where $u$ is the first Dirichlet eigenfunction on $\Omega$.
\end{enumerate}
\end{teo}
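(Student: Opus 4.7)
We establish the three claims in the order (ii), (iii), (i), since each step relies on the previous one.

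\emph{Part (ii).} We argue by contradiction. Suppose $\Omega = A \cup B$ with $A, B$ disjoint quasi-open sets of positive capacity. Then $H^1_0(\Omega) = H^1_0(A) \oplus H^1_0(B)$ gives $\lambda_1(\Omega) = \min\{\lambda_1(A), \lambda_1(B)\}$; we may assume $\lambda_1(A) \le \lambda_1(B)$. The idea is to shrink $B$ to a strict quasi-open subset without altering $\lambda_1(\Omega)$: set $B_n := \{w_B > 1/n\}$, a decreasing sequence of quasi-open subsets of $B$ with $|B_n| < |B|$, $w_{B_n} \to w_B$ in $L^1(\R^d)$, and $\lambda_1(B_n) \ge \lambda_1(B) \ge \lambda_1(A)$ by monotonicity. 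Hence $\tilde\Omega_n := A \cup B_n \subsetneq \Omega$ satisfies $|\tilde\Omega_n| < |\Omega|$, $d_\gamma(\tilde\Omega_n, \Omega) \to 0$, and $\lambda_1(\tilde\Omega_n) = \lambda_1(A) = \lambda_1(\Omega)$. For $n$ large, \eqref{sub1.1} applied to $\tilde\Omega_n$ gives $m|\Omega| \le m|\tilde\Omega_n| < m|\Omega|$, a contradiction.

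\emph{Part (iii).} By (ii), $\Omega$ is quasi-connected, and by the classical fact that on a quasi-connected quasi-open set the first Dirichlet eigenvalue is simple with a q.e.-positive first eigenfunction, we conclude that $u > 0$ q.e.\ on $\Omega$, i.e., $\Omega = \{u > 0\}$ up to a set of zero capacity.

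\emph{Part (i).} Normalize $u$ with $\int u^2\,dx = 1$; then $\int|\nabla u|^2\,dx = \lambda_1(\Omega)$ and $-\Delta u = \lambda_1(\Omega) u$. For $t > 0$, set $\Omega_t := \{u > t\}$; by (iii), $\Omega_t \nearrow \Omega$ as $t \to 0^+$ and $d_\gamma(\Omega_t, \Omega) \to 0$, so \eqref{sub1.1} applies. Using $(u-t)^+ \in H^1_0(\Omega_t)$ as a trial function in the Rayleigh quotient on $\Omega_t$, the eigenvalue equation, and Cauchy--Schwarz $\int(u-t)^+ \le |\Omega_t|^{1/2}\|(u-t)^+\|_{L^2}$, combined with \eqref{sub1.1}, one obtains
\[
\frac{m(|\Omega| - |\Omega_t|)}{t} \le \frac{\lambda_1(\Omega)\,|\Omega_t|^{1/2}}{\|(u-t)^+\|_{L^2}},
\]
whose right-hand side converges to $\lambda_1(\Omega)|\Omega|^{1/2}$ as $t \to 0^+$. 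By the coarea formula, the left-hand side rewrites as an average of $\int_{\{u = s\}} |\nabla u|^{-1}\,d\mathcal{H}^{d-1}$ over $s \in (0,t)$; in parallel, integration by parts on $\Omega_s$ yields $\int_{\{u = s\}}|\nabla u|\,d\mathcal{H}^{d-1} = \lambda_1(\Omega)\int_{\Omega_s} u\,dx \le \lambda_1(\Omega)|\Omega|^{1/2}$ for a.e.\ $s$. Applying Cauchy--Schwarz at the level set,
\[
\mathcal{H}^{d-1}(\{u = s\})^2 \le \int_{\{u=s\}}|\nabla u|\,d\mathcal{H}^{d-1}\,\cdot\,\int_{\{u=s\}}|\nabla u|^{-1}\,d\mathcal{H}^{d-1},
\]
multiplying by $m$, and passing $s \to 0^+$ along a suitable sequence with the lower semi-continuity $P(\Omega) \le \liminf_s \mathcal{H}^{d-1}(\{u=s\})$ of the perimeter yields $m\,P(\Omega)^2 \le \lambda_1(\Omega)^2\,|\Omega|$, which is \eqref{thsslb1e1}.

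\emph{Main obstacle.} The technical crux lies in (i): rigorously passing to the limit $s \to 0^+$ requires Sard's theorem (to select a.e.\ smooth level set), Lebesgue differentiation (to convert the average $(|\Omega|-|\Omega_t|)/t$ into a pointwise level-set integral), and $L^1$ lower semi-continuity of perimeter. The conceptual core is the coupling of the subsolution inequality (which controls the factor $\int_{\{u=s\}} |\nabla u|^{-1}\,d\mathcal{H}^{d-1}$) with the divergence identity for the eigenvalue equation (which controls the complementary factor $\int_{\{u=s\}} |\nabla u|\,d\mathcal{H}^{d-1}$), joined via the level-set Cauchy--Schwarz inequality.
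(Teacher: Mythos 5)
Your skeleton matches the paper's: (ii) by comparing with one piece of the decomposition, (iii) via quasi-connectedness, and (i) by testing the subsolution inequality \eqref{sub1.1} with $\Omega_t=\{u>t\}$ and the trial function $(u-t)^+$. The genuine gap is in the second half of part (i), where you convert the measure estimate into a perimeter estimate. First, the identity $\int_{\{u=s\}}|\nabla u|\,d\mathcal{H}^{d-1}=\lambda_1(\Omega)\int_{\Omega_s}u\,dx$ cannot be obtained by ``integration by parts on $\Omega_s$'' plus Sard's theorem: $u$ is only an $H^1_0$ eigenfunction of a quasi-open set, which may have empty interior (Remark \ref{buveqo1}), so $u$ need not be continuous, the sets $\{u>s\}$ need not be open, and the level sets $\{u=s\}$ are not hypersurfaces. (The identity can be rescued for a.e.\ $s$ by testing the weak equation with $\min\{(u-s)^+/\delta,1\}$ and letting $\delta\to0$ via the coarea formula, but that argument has to be made.) Second, and fatal for the statement as written, your chain loses a factor $\sqrt2$: after discarding the Dirichlet term, the Rayleigh-quotient comparison only gives $m|\{0<u\le t\}|\le \lambda_1(\Omega)\bigl(1-\|(u-t)^+\|_{L^2}^2\bigr)+o(t)\le 2t\lambda_1(\Omega)|\Omega|^{1/2}+o(t)$, and the factor $2$ in $1-\|(u-t)^+\|_{L^2}^2\approx 2t\int_\Omega u\,dx$ is unavoidable; feeding this into the level-set Cauchy--Schwarz yields $mP(\Omega)^2\le 2\lambda_1(\Omega)^2|\Omega|$, not \eqref{thsslb1e1}. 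The paper avoids both problems at once: it keeps $\int_{\{0<u\le t\}}|\nabla u|^2\,dx$ alongside $m|\{0<u\le t\}|$, applies $2\sqrt m\,ab\le a^2+mb^2$ to get $\sqrt m\int_{\{0<u\le t\}}|\nabla u|\,dx\le t\lambda_1(\Omega)|\Omega|^{1/2}$ (the two factors of $2$ cancel), and then a single coarea formula produces the average of $\mathcal{H}^{d-1}(\partial^*\{u>s\})$ directly, with no level-set divergence identity and with the sharp constant.

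Two smaller points. In (ii), your truncation $B_n=\{w_B>1/n\}$ is a sensible way to respect the constraint $d_\gamma(\widetilde\Omega,\Omega)<\eps$ in \eqref{sub1.1} (the paper simply compares with $A$ and glosses over this), but the strict inequality $|B_n|<|B|$ you invoke is not automatic. What the subsolution inequality actually yields is $|B_n|\ge|B|$, i.e.\ $w_B\ge 1/n$ a.e.\ on $B$ for large $n$; one must then observe that this forces $1_B=\min\{nw_B,1\}\in H^1(\R^d)$, hence $|B|=0$, contradicting $\cp(B)>0$. In (iii), you cite as ``classical'' precisely what the paper proves: its argument compares the torsion functions of $\omega=\{u>0\}$ and $\Omega$, shows $w_\Omega=w_\omega$ on $\omega$, and applies quasi-connectedness to the decomposition $\Omega=(\Omega\cap\{w_\Omega=w_\omega\})\cup(\Omega\cap\{w_\Omega>w_\omega\})$; if you outsource this step, you should at least give a reference or reproduce this short argument.
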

     
\begin{proof}
 In order to prove the bound \eqref{thsslb1e1}, we follow the idea from \cite{bulbk}.
Let $u$ be the first, normalized in $L^2(\Omega)$, eigenfunction on $\Omega$. Since $\lambda_1(\{u>0\})=\lambda_1(\Omega)$, we have that $|\{u>0\}\Delta\Omega |=0$. Consider the set $\Omega_{\eps}=\{u>\eps\}$. In order to use $\Omega_\eps$ to test the (local) subminimality of $\Omega$, we first note that $\Omega_\eps$ $\gamma$-converges to $\Omega$. Indeed, the family of torsion functions $w_\eps$ of $\Omega_\eps$ is decreasing in $\eps$ and converges in $L^2$ to the torsion function $w$ of $\{u>0\}$, as $\eps\to 0$, since
$$\lambda_1(\Omega)\int_\Omega (w-w_\eps)u\,dx=\int_\Omega\nabla w\cdot\nabla u\,dx-\int_{\Omega_\eps}\nabla w_\eps\cdot\nabla (u-\eps)^+\,dx=\int_\Omega u-(u-\eps)^+\,dx\to 0.$$
Now, using $(u-\eps)^+\in H^1_0(\Omega_\eps)$ as a test function for $\lambda_1(\Omega_\eps)$, we have
\be
\begin{array}{ll}
\lambda_1(\Omega)+m|\Omega|&\ds\le \lambda_1(\Omega_{\eps})+m|\Omega_{\eps}|\\
\\
&\ds\le\frac{\int_\Omega|\nabla (u-\eps)^{+}|^2\,dx}{\int_\Omega|(u-\eps)^{+}|^2\,dx}+m|\Omega_{\eps}|\\
\\
&\ds\le\int_\Omega|\nabla (u-\eps)^{+}|^2\,dx+\lambda_1(\Omega_\eps)\int_\Omega \left(|(u-\eps)^{+}|^2-u^2\right)dx+m|\Omega_{\eps}|\\
\\
&\ds\le\int_\Omega|\nabla (u-\eps)^{+}|^2\,dx+\lambda_1(\Omega)2\eps\int_\Omega u\,dx+m|\Omega_{\eps}|\\
\\
&\ds\le\int_\Omega|\nabla (u-\eps)^{+}|^2\,dx+2\eps\lambda_1(\Omega)|\Omega|^{1/2}+m|\Omega_{\eps}|.
\end{array}
\ee
Thus, we obtain
\begin{equation}
\int_{\{0<u\le\eps\}}|\nabla u|^2\,dx+m|\{0<u\le\eps\}|\le 2\eps\lambda_1(\Omega)|\Omega|^{1/2}.
\end{equation}
The mean quadratic-mean geometric and the H\"older inequalities give
\begin{equation}
2m^{1/2}\int_{\{0<u\le\eps\}}|\nabla u|\,dx\le 2m^{1/2}\left(\int_{\{0<u\le\eps\}}|\nabla u|^2\,dx\right)^{1/2}|\{0<u\le\eps\}|^{1/2}\le 2\eps\lambda_1(\Omega)|\Omega|^{1/2}.
\end{equation}
Using the co-area formula, we obtain
\begin{equation}
\frac{1}{\eps}\int_0^\eps \HH^{d-1}(\{u>t\}^*)\,dt\le m^{-1/2}\lambda_1(\Omega)|\Omega|^{1/2},
\end{equation}
and so, passing to the limit as $\eps\to0$, we obtain \eqref{thsslb1e1}.

   Let us now prove $(ii)$. Suppose, by absurd that $\cp(A)>0$ and $\cp(B)>0$ and, in particular, $|A|>0$ and $|B|>0$. Since $\cp(A\cap B)=0$, we have that $H^1_0(\Omega)=H^1_0(A)\oplus H^1_0(B)$ and so, $\lambda_1(\Omega)=\min\{\lambda_1(A),\lambda_1(B)\}$. Without loss of generality, we may suppose that $\lambda_1(\Omega)=\lambda_1(A)$. Then, we have
$$\lambda_1(A)+m|A|<\lambda_1(A)+m (|A|+|B|)=\lambda_1(\Omega)+m|\Omega|,$$
which is a contradiction with the subminimality of $\Omega$.

  In order to see $(iii)$, it is sufficient to prove that for every quasi-connected $\Omega$, we have $\Omega=\{u>0\}$. Indeed, let $\omega=\{u>0\}$ and consider the torsion functions $w_\omega$ and $w_\O$. We note that, by the weak maximum principle, we have $w_\omega\le w_\O$. Setting $\lambda=\lambda_1(\Omega)$, we have
$$\int_\O \lambda u w_\omega\,dx=\int_\O \nabla u\cdot\nabla w_\omega\,dx=\int_\O u\,dx,$$
$$\int_\O \lambda u w_\Omega\,dx=\int_\O \nabla u\cdot\nabla w_\Omega\,dx=\int_\O u\,dx.$$
Subtracting, we have
\begin{equation}\label{lemmaquasicon1}
\int_\O u(w_\O-w_\omega)\,dx=0,
\end{equation} 
and so, $w_\O=w_\omega$ on $\omega$. Consider the sets $A=\O\cap\{w_\Omega=w_\omega\}$ and $B=\O\cap\{w_\Omega>w_\omega\}$. By construction, we have that $A\cup B=\O$ and $A\cap B=\emptyset$. Moreover, we observe that $A=\omega\neq\emptyset$. Indeed, one inclusion $\omega\subset A$, follows by \eqref{lemmaquasicon1}, while the other inclusion follows, since by strong maximum principle for $w_\omega$ and $w_\Omega$ we have the equality $$\O\cap\{w_\Omega=w_\omega\}=\{w_\Omega>0\}\cap\{w_\Omega=w_\omega\}\subset\{w_\omega>0\}=\omega.$$
By the quasi-connectedness of $\Omega$, we have that $B=\emptyset$, i.e. $\omega=\Omega$.   
   
\end{proof}

\begin{oss}
If $\Omega$ is a subsolution for the first Dirichlet eigenvalue \eqref{sub1.1}, then we have the following bound on $\lambda_1(\Omega)$:  
\begin{equation}\label{boundlamsublb1}
\lambda_1(\Omega)\ge c_d m^{\frac{2}{d+2}},
\end{equation}
where $c_d$ is a dimensional constant. In fact, by \eqref{thsslb1e1} and the isoperimetric inequality, we have
$$\lambda_1(\Omega)|\Omega|^{1/2}\ge\sqrt{m}P(\Omega)\ge c_d\sqrt m|\Omega|^{\frac{d-1}{d}},$$
and so
$$\lambda_1(\Omega)\ge c_d\sqrt{m}|\Omega|^{\frac{d-2}{2d}}.$$
By the Faber-Krahn inequality $\lambda_1(\Omega)|\Omega|^{2/d}\ge \lambda_1(B)|B|^{2/d}$, we obtain
$$\lambda_1(\Omega)\ge c_d\sqrt{m}\left(|\Omega|^{\frac{2}{d}}\right)^{\frac{d-2}{4}}\ge c_d\sqrt{m}\left(\lambda_1(\Omega)^{-1}\lambda_1(B)|B|^{2/d}\right)^{\frac{d-2}{4}}\ge c_d\sqrt{m}\lambda_1(\Omega)^{-\frac{d-2}{4}}.$$
\end{oss}

\begin{oss}\label{buveqo1}
Even if the subsolutions have some nice qualitative properties, their local behaviour might be very irregular. In fact, one may construct subsolutions for the first Dirichlet eigenvalue (and thus, energy subsolutions) with empty interior in sense of the Lebesgue measure, i.e. the set $\Om_{(1)}$ of points of density $1$ has empty interior. Consider a bounded quasi-open set $\Dr$ with empty interior as, for example, 
$$\Dr=(0,1)\times(0,1)\setminus\left(\bigcup_{i=1}^{\infty}\overline B_{r_i}(x_i)\right)\subset\R^2,$$
where $\{x_i\}_{i\in\N}=\Q$ and $r_i$ is such that
$$\sum_{i\in \N} \cp (\overline B_{r_i}(x_i)) < +\infty \;\;\mbox{and}\; \sum_{i\in \N} \pi r_i^2 < \frac12.$$
 Let $\Omega\subset\Dr$ be the solution of the problem
$$\min\left\{\lambda_1(\Omega)+|\Omega|:\Omega\subset\Dr,\ \Omega\ \hbox{quasi-open}\right\}.$$
Since, $\Omega$ is a global minimizer among all sets in $\Dr$, it is also a subsolution. On the other hand, $\Dr$ has empty interior and so does $\Omega$. 
\end{oss}

\section{Interaction between energy subsolutions}\label{s4}
   In this section we consider configurations of disjoint quasi-open sets $\Omega_1,\dots,\Omega_n$ in $\R^d$, each one being an energy subsolution. In particular, we will study the behaviour of the energy functions $w_{\Omega_i}$, $i=1,\dots,n$, around the points belonging to more than one of the measure theoretical boundaries $\partial^M\Omega_i$. 
      
   We start our discussion with a  result which is useful in  multiphase shape optimization problems, since it allows to separate by an open set each quasi-open cell from the others.

\begin{lemma}\label{sep2qolemma}
Suppose that the disjoint quasi-open sets $\Omega_1$ and $\Omega_2$ are energy subsolutions. Then the corresponding energy function $w_1$ and $w_2$ vanish quasi-everywhere (and so, also a.e.) on the common boundary $\partial^M\Omega_1\cap\partial^M\Omega_2$.  
\end{lemma}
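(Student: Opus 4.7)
Since $w_i \in H^1_0(\Omega_i)$ has quasi-continuous representative vanishing q.e.\ on $\Omega_i^c$, and $\cp(\Omega_1\cap\Omega_2)=0$ by disjointness, we immediately have $w_1=0$ q.e.\ on $\Omega_2$ and $w_2=0$ q.e.\ on $\Omega_1$. Writing $\Gamma=\partial^M\Omega_1\cap\partial^M\Omega_2$, the lemma therefore reduces, by symmetry between $w_1$ and $w_2$, to the capacitary statement
$$\cp(\Omega_1\cap\Gamma)=0.$$

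I argue by contradiction: suppose $\cp(\Omega_1\cap\Gamma)>0$. The canonical representative of Remark \ref{canrepqo} identifies $\Omega_1=\{\widetilde w_1>0\}$ pointwise via the Lebesgue average \eqref{pwdefu}, so we may pick $x_0\in\Omega_1\cap\Gamma$ with $w_1(x_0)>0$ and
$$\frac{1}{|B_r|}\int_{B_r(x_0)} w_1\,dy \,\longrightarrow\, w_1(x_0) \quad \text{as } r\to 0.$$
Since $x_0\in\partial^M\Omega_2\subset\overline{\Omega}_2^M$ and $\Omega_2$ is an energy subsolution, Proposition \ref{dens} yields a sequence $r_n\downarrow 0$ with $|B_{r_n}(x_0)\cap\Omega_2|\ge c|B_{r_n}|$ for some $c>0$.

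The heart of the argument is a Poincaré--Friedrichs estimate that forces the Caffarelli--Jerison--Kenig quantity
$$\Phi_i(r):=\frac{1}{r^2}\int_{B_r(x_0)}\frac{|\nabla w_i|^2}{|y-x_0|^{d-2}}\,dy$$
to blow up for $i=1$. Because $w_1=0$ a.e.\ on $B_{r_n}(x_0)\cap\Omega_2$, which has density at least $c$ in $B_{r_n}$, the Friedrichs inequality combined with Jensen applied to the Lebesgue-point property gives, for all large $n$,
$$w_1(x_0)^2\,|B_{r_n}| \,\le\, 4\int_{B_{r_n}(x_0)} w_1^2\,dy \,\le\, C r_n^2 \int_{B_{r_n}(x_0)}|\nabla w_1|^2\,dy,$$
hence $\int_{B_{r_n}(x_0)}|\nabla w_1|^2\,dy\ge c\,w_1(x_0)^2 r_n^{d-2}$. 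Using the pointwise inequality $|y-x_0|^{-(d-2)}\ge r_n^{-(d-2)}$ inside $B_{r_n}(x_0)$, we deduce
$$\Phi_1(r_n)\,\ge\, c\,w_1(x_0)^2\, r_n^{-2}\,\longrightarrow\,\infty.$$
The two-phase monotonicity bound of Theorem \ref{mth2} applied to the pair $(w_1,w_2)$ (both non-negative with $\Delta w_i\ge-1$ and disjoint supports) gives $\Phi_1(r)\Phi_2(r)\le C$ for all $r\in(0,1/2)$, so a uniform positive lower bound $\Phi_2(r_n)\ge c'$ forces the desired contradiction $w_1(x_0)^2\le C'r_n^2\to 0$.

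The main obstacle is thus the non-degeneracy $\Phi_2(r_n)\ge c'>0$. The starting point is Lemma \ref{bnd} applied to $\Omega_2$ at $x_0\in\overline{\Omega}_2^M$, giving the linear lower bound $\|w_2\|_{L^\infty(B_r(x_0))}\ge C_d r$. Converting this $L^\infty$ non-degeneracy into a lower bound on the weighted Dirichlet integral $\Phi_2$ requires an upper Lipschitz estimate on $w_2$ (standard for torsional subsolutions in the Alt--Caffarelli framework) together with a second Poincaré--Friedrichs step based on $w_2=0$ on $B_{r_n}(x_0)\cap\Omega_1$, whose positive density at $x_0\in\overline{\Omega}_1^M$ is again furnished by Proposition \ref{dens}. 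By the symmetric argument with the roles of $\Omega_1$ and $\Omega_2$ interchanged, $\cp(\Omega_2\cap\Gamma)=0$ as well, completing the proof.
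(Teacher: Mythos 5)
Your strategy is a two--phase version of the argument the paper uses for \emph{triple} points (Remark \ref{triple130308} and Theorem \ref{lem130306}), and its first half is sound: the Jensen--Poincar\'e step correctly shows that if $w_1(x_0)>0$ and $\Omega_2$ has density $\ge c$ along $r_n\to 0$, then $\Phi_1(r_n)\ge c\,w_1(x_0)^2r_n^{-2}\to\infty$, so the Caffarelli--Jerison--Kenig bound forces $\Phi_2(r_n)\to0$. The genuine gap is exactly where you flag "the main obstacle": the non-degeneracy $\Phi_2(r_n)\ge c'>0$. Your proposed route through "an upper Lipschitz estimate on $w_2$, standard for torsional subsolutions" is not available: Lipschitz regularity of the state function is a deep result known for \emph{minimizers} (cf.\ \cite{bhp05,brla}), not for mere energy subsolutions, which can be wildly irregular (Remark \ref{buveqo1} exhibits one with empty interior). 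Without it, the $L^\infty$ lower bound of Lemma \ref{bnd} gives no control from below on $\int_{B_{r_n}}w_2^2$, and the "second Poincar\'e--Friedrichs step" cannot start. The repair is Corollary \ref{est} (or the remark following it), which upgrades $\|w_2\|_{L^\infty(B_r(x_0))}\ge C_d r$ to a lower bound on the solid average $\frac{1}{|B_r|}\int_{B_r(x_0)}w_2\ge cr$, hence $\int_{B_r}w_2^2\ge c r^{d+2}$ by Cauchy--Schwarz; alternatively one uses the Alt--Caffarelli estimate \eqref{caf} as in the proof of Theorem \ref{lem130306}. A secondary but real defect: you need the density lower bounds for $\Omega_1$ and for $\Omega_2$ at $x_0$ \emph{along the same sequence} $r_n$, whereas Proposition \ref{dens} applied to each set separately only produces two a priori unrelated sequences; the paper circumvents this in Theorem \ref{lem130306} by running the iteration \eqref{dens3} on the \emph{product} of the densities.

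You should also be aware that the paper's own proof of this lemma is far more elementary and uses no monotonicity formula at all. It fixes $x_0\in\partial^M\Omega_1\cap\partial^M\Omega_2$ with $w_2(x_0)>0$, uses Proposition \ref{dens} for $\Omega_1$ once to get $\limsup_n|B_{r_n}\cap\Omega_2|/|B_{r_n}|\le 1-c$, and then exploits only the Lebesgue-point identity together with the sub-mean-value property of $w_2+|x|^2/2d$ (i.e.\ $\Delta w_2\ge-1$) to show $\lim_{r\to0}\|w_2\|_{L^\infty(B_r(x_0))}=w_2(x_0)$; this yields $w_2(x_0)\le(1-c)\,w_2(x_0)$ and hence $w_2(x_0)=0$. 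So even once your argument is patched, it proves the statement with considerably heavier machinery than necessary.
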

\begin{proof}
By Remark \ref{canrepqo} we may suppose that $\Omega_i=\{w_i>0\}$ and that every point $x\in\R^d$ is a regular point for both $w_1$ and $w_2$.

Let $x_0\in\R^d$ be such that $w_2(x_0)>0$. Suppose, by absurd, that $x_0\in \partial^M\Omega_1\cap\partial^M\Omega_2$. In particular, for each $r>0$ we have $|\{w_1>0\}\cap B_r(x_0)|>0$. By Proposition \ref{dens}, we have that
there is a sequence $r_n\to 0$ such that 
\begin{equation}\label{sep2qo2}
\lim_{n\to\infty}\frac{|\{w_1>0\}\cap B_{r_n}(x_0)|}{|B_{r_n}|}\ge c>0.
\end{equation}
Since $|\Omega_1\cap\Omega_2|=0$, we have that 
\begin{equation}\label{sep2qo3}
\limsup_{n\to\infty}\frac{|\{w_2>0\}\cap B_{r_n}(x_0)|}{|B_{r_n}|}\le 1-c<1,
\end{equation}
and since $x_0$ is a regularity point for $w_2$, we obtain
\begin{align*}
w_2(x_0)&=\lim_{n\to\infty}\frac{1}{|B_{r_n}|}\int_{B_{r_n}(x_0)}w_2(x)\,d x\\
\\
&\le \limsup_{n\to\infty}\|w_2\|_{L^\infty(B_{r_n}(x_0))} \limsup_{n\to\infty}\frac{|\{w_2>0\}\cap B_{r_n}(x_0)|}{|B_{r_n}|}\label{sep2qo4}\\
\\
&\le (1-c)\limsup_{n\to\infty}\|w_2\|_{L^\infty(B_{r_n}(x_0))}.
\end{align*}
Note that, in order to have a contradiction, it is enough to prove that 
$$\lim_{r\to 0}\|w_2\|_{L^\infty(B_r(x))}=w_2(x_0).$$
In fact, suppose that there is a sequence $x_n\to x_0$ such that $w_2(x_n)\ge\delta+w_2(x_0)$ for some $\delta\ge0$. Let $r>0$ and let 
$$v_n(x)=w_2(x)-\frac{r^2-|x_n-x|^2}{2d}.$$
Then $\Delta v_n\ge 0$ on $B_r(x_n)$, and so 
$$v_n(x_n)\le \frac{1}{|B_r|}\int_{B_r(x_n)}v_n\,dx\le \frac{1}{|B_r|}\int_{B_r(x_n)}w_2\,dx.$$
By the choice of $v_n$, we have that 
$$w_2(x_n)\le \frac{r^2}{2d}+\frac{1}{|B_r|}\int_{B_r(x_n)}w_2\,dx,$$
and since the map $x\mapsto\int_{B_r(x)}w_2\,dx$ is continuous, we obtain
$$\delta+w_2(x_0)\le \frac{r^2}{2d}+\frac{1}{|B_r|}\int_{B_r(x_0)}w_2\,dx.$$
Passing to the limit as $r\to0$, we have that $\delta=0$. As a consequence, we have that $w_2(x_0)=0$, which is a contradiction.
\end{proof}

\begin{prop}\label{sep2qo}
Suppose that the disjoint quasi-open sets $\Omega_1$ and $\Omega_2$ are energy subsolutions. Then there are open sets $D_1, D_2\subset\R^d$ such that $\Omega_1\subset D_1$, $\Omega_2\subset D_2$ and $\Omega_1\cap D_2=\Omega_2\cap D_1=\emptyset$, up to sets of zero capacity.
\end{prop}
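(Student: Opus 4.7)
The plan is to take $D_1$ and $D_2$ to be the complements of the topological closures of $\Omega_2$ and $\Omega_1$, respectively. Working with the canonical representatives from Remark~\ref{canrepqo} we have $\Omega_i\subset\overline{\Omega}_i^M$ and $\overline{\Omega}_i=\overline{\Omega}_i^M$; by Theorem~\ref{thss}(iii) the set $\overline{\Omega}_i^M=\R^d\setminus\Omega_{i,(0)}$ is closed. Hence
$$D_1:=\R^d\setminus\overline{\Omega_2}^M,\qquad D_2:=\R^d\setminus\overline{\Omega_1}^M,$$
are open sets.

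The two ``negative'' relations are immediate from the canonical representative: since $\Omega_i\subset\overline{\Omega_i}^M$, we get $\Omega_1\cap D_2=\Omega_1\setminus\overline{\Omega_1}^M=\emptyset$ and likewise $\Omega_2\cap D_1=\emptyset$. So the only thing to prove is that $\Omega_1\setminus D_1=\Omega_1\cap\overline{\Omega_2}^M$ has zero capacity (and symmetrically for $\Omega_2\cap\overline{\Omega_1}^M$). Decomposing $\overline{\Omega_2}^M=\Omega_2\cup\partial^M\Omega_2$, and using that $\cp(\Omega_1\cap\Omega_2)=0$ by disjointness, this reduces to the single estimate
$$\cp(\Omega_1\cap\partial^M\Omega_2)=0.$$

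The main step is the inclusion $\Omega_1\cap\partial^M\Omega_2\subset\partial^M\Omega_1\cap\partial^M\Omega_2$, which is purely set-theoretic. Indeed, if $x\in\Omega_1\cap\partial^M\Omega_2$, then $x\in\Omega_1\subset\overline{\Omega_1}^M$ gives $|B_r(x)\cap\Omega_1|>0$ for all $r>0$; on the other hand, since $|\Omega_1\cap\Omega_2|=0$ we have $|B_r(x)\cap\Omega_1^c|\ge|B_r(x)\cap\Omega_2|>0$, the latter because $x\in\partial^M\Omega_2$. Hence $x\in\partial^M\Omega_1$, proving the inclusion. Now Lemma~\ref{sep2qolemma} applies: $w_1=0$ quasi-everywhere on $\partial^M\Omega_1\cap\partial^M\Omega_2$, while on $\Omega_1$ (in the canonical representative) $w_1>0$. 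Consequently $\Omega_1\cap\partial^M\Omega_2$ is contained, up to capacity, in the set $\{\widetilde{w}_1=0\}\cap\{\widetilde{w}_1>0\}$, which is empty, and we conclude $\cp(\Omega_1\cap\partial^M\Omega_2)=0$.

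The essential analytic content is already encapsulated in Lemma~\ref{sep2qolemma} (which in turn rests on the density estimate of Proposition~\ref{dens}); the rest is bookkeeping with canonical representatives and measure-theoretic closures. Accordingly, I do not expect any real obstacle beyond being careful that every set-theoretic manipulation is made in the canonical representative so that topological and measure-theoretic closures coincide.
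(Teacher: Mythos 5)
Your proof is correct and follows essentially the same route as the paper's: the same choice $D_1=\R^d\setminus\overline{\Omega_2}^M$, $D_2=\R^d\setminus\overline{\Omega_1}^M$, the same reduction to showing that $\Omega_1$ does not meet $\overline{\Omega_2}^M$ (up to capacity), and the same appeal to Lemma~\ref{sep2qolemma} via the canonical representatives. One small caveat: the identity $\overline{\Omega_2}^M=\Omega_2\cup\partial^M\Omega_2$ is not literally true for a general quasi-open set (a point can have a ball around it that is covered by $\Omega_2$ up to Lebesgue measure zero without belonging to the canonical representative of $\Omega_2$, e.g.\ after removing a compact set of zero measure but positive capacity), yet this is harmless here: your own set-theoretic argument, applied to an arbitrary $x\in\Omega_1\cap\overline{\Omega_2}^M$ rather than only to $x\in\Omega_1\cap\partial^M\Omega_2$, already gives $|B_r(x)\cap\Omega_1|>0$ and $|B_r(x)\cap\Omega_1^c|\ge|B_r(x)\cap\Omega_2|>0$ for all $r$ (and symmetrically), hence $\Omega_1\cap\overline{\Omega_2}^M\subset\partial^M\Omega_1\cap\partial^M\Omega_2$ directly, so the decomposition step can simply be dropped.
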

\begin{proof}
Define  $D_1=\R^d\setminus\overline{\Omega}_2^M$ and $D_2=\R^d\setminus\overline{\Omega}_1^M$, which by the definition of a measure theoretic closure are open sets. As in Lemma \ref{sep2qolemma}, we may suppose that $\Omega_i=\{w_i>0\}$ and that every point of $\Omega_i$ is regular for $w_i$. Since $\Omega_i\subset\overline{\Omega}_i^M$, we have to show only that $\Omega_1\cap\overline{\Omega}_2^M=\emptyset$. Indeed, if this is not the case there is a point $x_0\in\overline{\Omega}_2^M$ such that $w(x_1)>0$, which is a contradiction with Lemma \ref{sep2qolemma}.
\end{proof}

   Let now $\Omega$ be an energy subsolution and let $w=w_\Omega$. Then by Lemma \ref{bnd} there is a constant $c>0$ such that for any $x_0\in \partial^M\Omega$ and $r>0$ small enough we have that, 
\begin{equation}\label{twosup130308}
cr\le\|w\|_{L^\infty(B_r(x_0))}.
\end{equation} 
\begin{oss}\label{triple130308}
{\it (Intuitive approach in the smooth case)} Note that in a sufficiently smooth setting,  \eqref{twosup130308} corresponds in some weak sense to a lower bound on the gradient of $w$ in $x_0$, i.e. it is an alternative (not equivalent!) form of the inequality
\begin{equation}\label{twograd130308}
c\le\frac{1}{r^d}\int_{B_r(x_0)}|\nabla w|^2\,dx.
\end{equation} 
The later can be used to determine some quantitative behaviour of the some optimal partitions.

Let $\Omega_1$ and $\Omega_2$ be two disjoint quasi-open sets and $x_0\in\partial^M\Omega_1\cap\partial^M\Omega_2$. If, for $r$ small enough, \eqref{twograd130308} holds for $w_i:=w_{\Omega_i}$ and $i=1,2$, then applying the Caffarelli-Jerison-Kenig monotonicity formula (Theorem \ref{mth2}), there is a constant $C>0$ such that for $r$ small enough we have 
\begin{equation*}
\frac{1}{r^d}\int_{B_r(x_0)}|\nabla w_i|^2\,dx\le C,
\end{equation*}
i.e. the gradients $|\nabla w_1|$ and $|\nabla w_2|$ are bounded in $\partial^M\Omega_1\cap\partial^M\Omega_2$.

Let $\Omega_1$, $\Omega_2$ and $\Omega_3$ be three disjoint quasi-open sets such that, for each $x_0\in\partial^M\Omega_i$ and $i=1,2,3$, the corresponding torsion functions $w_i$ satisfies \eqref{twograd130308}. Then the set $\partial^M\Omega_1\cap\partial^M\Omega_2\cap\partial^M\Omega_3$ has to be empty. Indeed, if $x_0\in \partial^M\Omega_1\cap\partial^M\Omega_2\cap\partial^M\Omega_3$, by the three-phase monotonicity formula (Lemma \ref{mth3}) and \eqref{twograd130308} we would have 
\begin{equation*}
r^{-3\eps}c^3\le \prod_{i=1}^3\left(\frac{1}{r^{d+\eps}}\int_{B_r(x_0)}|\nabla w_i|^2\,dx\right)\le C_d\left(1+\sum_{i=1}^3\int_{B_1(x_0)}\frac{|\nabla w_i|^2}{|x|^{d-2}}\,dx\right),
\end{equation*}   
which is false for $r>0$ small enough. So, triple junction points can not exist. 
\end{oss}
\medskip

\begin{oss} {\it (The two dimensional case)}
In dimension two, inequality \eqref{twograd130308} does not require smoothness, being an easy consequence from the Sobolev inequality. Indeed, let $\Omega_1,\Omega_2\subset\R^2$ be two disjoint energy subsolution with $m=1$ and let $x_0\in\partial^M\Omega_1\cap\partial^M\Omega_2$. There is some constant $c>0$ (not depending on $\Omega_1$ and $\Omega_2$) such that \eqref{twograd130308} holds for $r>0$ small enough. Suppose $x_0=0$. By Corollary \ref{est}, for each $0<r\le r_0$, we have 
\begin{equation}\label{twosubs130306}
cr\le\frac{1}{|\partial B_r|}\int_{\partial B_r} w_1\,d\HH^{1}\qquad\hbox{and}\qquad cr\le\frac{1}{|\partial B_r|}\int_{\partial B_r} w_2\,d\HH^{1},
\end{equation} 
and, in particular, $\partial B_r\cap\{w_1=0\}\neq\emptyset$ and $\partial B_r\cap\{w_2=0\}\neq\emptyset$. Thus, for almost every $r\in(0,r_0)$, we have 
$$cr^3\le \frac{1}{|\partial B_r|}\left(\int_{\partial B_r} w_i\,d\HH^1\right)^2\le\int_{\partial B_r} w_i^2\,d\HH^1\le \lambda r^2\int_{\partial B_r}|\nabla w_i|^2\,d\HH^1,$$
where $\lambda<+\infty$ a constant. 
Dividing by $r^2$ and integrating for $r\in[0,R]$, where $R<r_0$, we obtain \eqref{twograd130308}. 

   In particular, if $\Omega_1,\Omega_2,\Omega_3\subset\R^2$ are three disjoint energy subsolutions then there are no triple points, i.e. the set $\partial^M\Omega_1\cap\partial^M\Omega_2\cap\partial^M\Omega_3$ is empty. 
\end{oss}
 \medskip
 
 In the rest of the section we make the previous arguments rigorous in the non-smooth setting and  prove that if the quasi-open sets $\Omega_1$, $\Omega_2$ and $\Omega_3$ are energy subsolutions, then the above conclusions still hold in any dimension of the space.
 We state a preliminary lemma, which is implicitly contained in the proof of Lemma 3.2 of \cite{altcaf}.
 
\begin{lemma}
For every $u\in H^1(B_r)$ we have the following estimate:
\begin{equation}\label{caf}
\frac{1}{r^2}|\{u=0\}\cap B_r|\left(\frac{1}{|\partial B_r|}\int_{\partial B_r}u\,d\HH^{d-1}\right)^2\leq C_d\int_{B_r}|\nabla u|^2\,dx,
\end{equation}
where $C_d$ is a constant that depends only on the dimension $d$.
\end{lemma}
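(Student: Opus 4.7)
The plan is to reduce the inequality to a boundary-mean Poincar\'e--Wirtinger estimate on $B_r$. I would first set $m := \frac{1}{|\partial B_r|}\int_{\partial B_r}u\,d\HH^{d-1}$ and introduce $v := u - m \in H^1(B_r)$, so that $v$ has zero average on $\partial B_r$ and $v = -m$ a.e.\ on $N := \{u=0\}\cap B_r$. The left-hand side of the claimed inequality is then exactly $r^{-2}|N|\,m^2 = r^{-2}\int_N v^2\,dx$, so the estimate reduces to bounding $\int_N v^2$ by $C_d\,r^2\int_{B_r}|\nabla u|^2$.

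The core step is the Poincar\'e-type inequality
\begin{equation*}
\int_{B_r}v^2\,dx \;\le\; C_d\,r^2\int_{B_r}|\nabla v|^2\,dx\qquad\text{whenever }\int_{\partial B_r}v\,d\HH^{d-1}=0.
\end{equation*}
Once this is granted, the proof is immediate: since $|v|=m$ on $N$ and $|\nabla v|=|\nabla u|$,
\begin{equation*}
|N|\,m^2 \;=\; \int_N v^2\,dx \;\le\; \int_{B_r}v^2\,dx \;\le\; C_d\,r^2\int_{B_r}|\nabla u|^2\,dx,
\end{equation*}
which is the claim after dividing by $r^2$.

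To prove the Poincar\'e inequality above I would scale to $r=1$ via $u(x)\mapsto u(rx)$, so that the constant depends only on the dimension by scale invariance. On $B_1$ I would combine the classical Wirtinger inequality $\|v-\overline v_{B_1}\|_{L^2(B_1)}\le C_d\|\nabla v\|_{L^2(B_1)}$ with the $H^1$-trace theorem to get $\|v-\overline v_{B_1}\|_{L^2(\partial B_1)}\le C_d\|\nabla v\|_{L^2(B_1)}$; the boundary-mean condition $\int_{\partial B_1}v=0$ together with Cauchy--Schwarz on $\partial B_1$ then forces $|\overline v_{B_1}|\le C_d\|\nabla v\|_{L^2(B_1)}$, and combining these two bounds controls $\|v\|_{L^2(B_1)}$.

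The main obstacle is essentially bookkeeping rather than conceptual: one must verify that the constant really depends only on the dimension (which the scaling argument takes care of), and that the identity $v=-m$ on $N$ is used only in an a.e.\ sense (which is fine because an $H^1$ function is determined pointwise off a null set, and $N$ is itself defined up to null sets). The argument uses neither positivity nor any subharmonicity of $u$, so it applies to arbitrary $u\in H^1(B_r)$ as required by the statement.
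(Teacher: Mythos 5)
Your proof is correct, but it takes a genuinely different and more elementary route than the paper's. The paper first reduces to $u\ge0$, introduces the solution $v$ of the obstacle problem $\min\left\{\int_{B_r}|\nabla v|^2\,dx:\ u-v\in H^1_0(B_r),\ v\ge u\right\}$, invokes the estimate of Alt and Caffarelli (their Lemma 2.3) to bound the left-hand side by $C_d\int_{B_r}|\nabla(u-v)|^2\,dx$, and finally uses the harmonicity of $v$ on $\{v>u\}$ to show this is at most $\int_{B_r}|\nabla u|^2\,dx$. You instead note that the left-hand side equals $r^{-2}\int_N(u-m)^2\,dx$ with $m$ the boundary mean and $N=\{u=0\}\cap B_r$, and appeal to a Poincar\'e inequality on the subspace of $H^1(B_r)$ with vanishing boundary mean; your derivation of that inequality (Wirtinger plus the trace theorem plus Cauchy--Schwarz on $\partial B_r$, after scaling to $r=1$) is sound, and the use of $v=-m$ a.e.\ on $N$ is unproblematic since $N$ is in any case defined only up to null sets. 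In fact your argument proves the stronger statement $r^{-2}\int_{B_r}(u-m)^2\,dx\le C_d\int_{B_r}|\nabla u|^2\,dx$, of which the lemma is an immediate consequence, and it needs neither positivity of $u$ nor the obstacle problem. What the Alt--Caffarelli route buys --- and the reason it is the standard reference here --- is the sharper intermediate bound in terms of $\int_{B_r}|\nabla(u-v)|^2\,dx$, i.e.\ the energy of $u$ relative to a superharmonic replacement, which is the form actually needed in free boundary regularity arguments; for the weaker conclusion stated in this lemma, your elementary argument is perfectly adequate.
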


\begin{proof}
We note that it is sufficient to prove the result in the case $u\ge 0$. Let $v\in H^1(B_r)$ be the solution of the obstacle problem
\begin{equation*}
\min\left\{\int_{B_r}|\nabla v|^2\,dx:\ u-v\in H^1_0(B_r),\ v\ge u\right\}.
\end{equation*} 
Then $v$ is super-harmonic on $B_r$ and harmonic on the quasi-open set $\{v>u\}$. Reasoning as in \cite[Lemma 2.3]{altcaf}, we have 

\begin{equation}\label{caf}
\frac{1}{r^2}|\{u=0\}\cap B_r|\left(\frac{1}{|\partial B_r|}\int_{\partial B_r}u\,d\HH^{d-1}\right)^2\leq C_d\int_{B_r}|\nabla (u-v)|^2\,dx.
\end{equation}
Now the claim follows by the harmonicity of $v$ on $\{v>u\}$ and the calculation
$$\int_{B_r}|\nabla (u-v)|^2\,dx=\int_{B_r}|\nabla u|^2-|\nabla v|^2\,dx+2\int_{B_r}\nabla v\cdot\nabla (v-u)\,dx\le\int_{B_r}|\nabla u|^2\,dx.$$
\end{proof}

\begin{teo}\label{lem130306}
Suppose that $\Omega_1,\Omega_2,\Omega_3\subset\R^d$ are three mutually disjoint energy subsolutions. Then the set $\partial^M\Omega_1\cap\partial^M\Omega_2\cap\partial^M\Omega_3$ is empty. 
\end{teo}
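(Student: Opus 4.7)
I argue by contradiction. Suppose $x_0\in\partial^M\Omega_1\cap\partial^M\Omega_2\cap\partial^M\Omega_3$; translate so that $x_0=0$, and using the remarks after Definition \ref{sub} (take the common constant $\min_i m_i$ and then rescale) assume each $\Omega_i$ is an energy subsolution with $m=1$. Set $w_i:=w_{\Omega_i}$. Then $-\Delta w_i=1_{\Omega_i}$, so $\Delta w_i\ge-1$, and $\int_{\R^d}w_iw_j\,dx=0$ for $i\ne j$ because $\Omega_i\cap\Omega_j$ has zero capacity. The three-phase monotonicity formula (Lemma \ref{mth3}) therefore produces a constant $M>0$ such that
\begin{equation}\label{planeqmono}
\prod_{i=1}^3\left(\frac{1}{r^{2+\eps}}\int_{B_r}\frac{|\nabla w_i|^2}{|x|^{d-2}}\,dx\right)\le M,\qquad r\in(0,\tfrac{1}{2}).
\end{equation}
The plan is to exhibit a sequence $r_n\to 0$ along which the left-hand side of \eqref{planeqmono} tends to $+\infty$.

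For each $i$, since $0\in\overline{\Omega}_i^{M}$, Lemma \ref{bnd} together with Corollary \ref{est} supplies a dimensional constant $c_1>0$ with $\frac{1}{|\partial B_r|}\int_{\partial B_r}w_i\,d\HH^{d-1}\ge c_1 r$ for all small $r$. Substituting this lower bound for the spherical average into the Alt--Caffarelli type inequality proved in the lemma immediately preceding Theorem \ref{lem130306} yields
\begin{equation}\label{planeqac}
\int_{B_r}|\nabla w_i|^2\,dx\ge c_2\,|\{w_i=0\}\cap B_r|,
\end{equation}
with $c_2>0$ dimensional. Since $|x|^{d-2}\le r^{d-2}$ on $B_r$, \eqref{planeqac} upgrades to
$$\int_{B_r}\frac{|\nabla w_i|^2}{|x|^{d-2}}\,dx\ge c_2\,r^{-(d-2)}\,|\{w_i=0\}\cap B_r|.$$
Consequently, if we can find $r_n\to 0$ such that $|\{w_i=0\}\cap B_{r_n}|\ge c_0|B_{r_n}|$ holds for every $i\in\{1,2,3\}$ simultaneously, then each factor in \eqref{planeqmono} is at least $\mathrm{const}\cdot r_n^{-\eps}$, the product blows up, and \eqref{planeqmono} is violated.

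The existence of such a simultaneous sequence is the combinatorial heart of the proof. Define $\alpha_i(r):=|\Omega_i\cap B_r|/|B_r|$, which is continuous in $r$ since $|\Omega_i\cap B_r|$ is, and fix $c_0\in(0,\min\{1/2,c\})$ where $c>0$ is the dimensional constant from Proposition \ref{dens}. The open sets $A_i:=\{r>0 : \alpha_i(r)>1-c_0\}$ are pairwise disjoint: the disjointness of the $\Omega_i$ gives $\alpha_1+\alpha_2+\alpha_3\le 1$, and $c_0<1/2$ prevents two of them from simultaneously exceeding $1-c_0$ (otherwise the sum would exceed $2-2c_0>1$). If no admissible sequence $r_n$ existed, then for some $r^*>0$ we would have $(0,r^*)\subset A_1\cup A_2\cup A_3$; by connectedness of the interval, exactly one $A_i$, say $A_1$, would then contain $(0,r^*)$. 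But then $\alpha_2(r)\le 1-\alpha_1(r)<c_0$ on $(0,r^*)$, contradicting $\limsup_{r\to 0}\alpha_2(r)\ge c$, which Proposition \ref{dens} gives because $0\in\overline{\Omega}_2^{M}$. Hence the desired sequence $r_n$ exists, and combining it with the previous step produces the contradiction to \eqref{planeqmono}.

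The main obstacle is the construction of this common sequence of radii: Proposition \ref{dens} only supplies a \emph{limsup} density estimate for each cell separately, and aligning the three phases at a single scale requires exploiting their pairwise disjointness via the connectedness argument above. Once this alignment is in hand, the three-phase monotonicity formula of Lemma \ref{mth3} forces the contradiction essentially formally.
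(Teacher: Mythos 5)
Your proof is correct, and its skeleton coincides with the paper's: argue by contradiction at a putative triple point, use Lemma \ref{bnd} and Corollary \ref{est} to get the linear lower bound on the spherical averages of the $w_i$, feed that into the Alt--Caffarelli type lemma preceding the theorem to convert a volume-density bound on $\{w_i=0\}\cap B_r$ into the gradient lower bound \eqref{twograd130308}, and then blow up the left-hand side of the three-phase monotonicity formula (Lemma \ref{mth3}) as in Remark \ref{triple130308}. Where you genuinely diverge is in producing the common sequence of radii. The paper multiplies the doubling inequality \eqref{dens3} over the three phases and re-runs the iteration from the proof of Proposition \ref{dens} on the product $\prod_i\|w_i\|_{L^\infty(B_r)}/r$, obtaining a sequence $r_n\to0$ along which \emph{all three} densities $|\Omega_i\cap B_{r_n}|/|B_{r_n}|$ are bounded below; the lower bound on $|\{w_i=0\}\cap B_{r_n}|$ then comes from the presence of the \emph{other} two phases. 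You instead only arrange that each $\alpha_i(r_n)$ stays away from $1$, which is all the Alt--Caffarelli lemma needs, and you get this by a soft topological argument: the sets $\{r:\alpha_i(r)>1-c_0\}$ are disjoint open subsets of $(0,r^*)$, so if they covered a punctured neighbourhood of $0$ one of them would swallow it by connectedness, forcing the other two densities below $c_0$ and contradicting the single-phase $\limsup$ estimate of Proposition \ref{dens}. Your route is more elementary (no product iteration, and Proposition \ref{dens} is invoked for a single phase only), at the price of a slightly weaker intermediate conclusion -- you do not recover the simultaneous positive lower density of all three cells along a sequence, which the paper's argument yields as a by-product. Both are complete; the only small points to make explicit in a final write-up are the identification $|\{w_i=0\}\cap B_r|=(1-\alpha_i(r))|B_r|$ (valid since $\Omega_i=\{w_i>0\}$ a.e.) and the continuity of $r\mapsto|\Omega_i\cap B_r|$, both of which you already use correctly.
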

\begin{proof} 
Suppose for contradiction that there is a point $x_0\in\partial^M\Omega_1\cap\partial^M\Omega_2\cap\partial^M\Omega_3$. Without loss of generality $x_0=0$. Using the inequality \eqref{dens3}, we have 
\begin{equation*}
\prod_{i=1}^3\frac{\|w_i\|_{L^\infty(B_{r/2})}}{r/2}\le C_d\left(\prod_{i=1}^3\frac{|\{w_i>0\}\cap B_r|}{|B_r|}\right)\left(\prod_{i=1}^3\frac{\|w_i\|_{L^\infty(B_{2r})}}{2r}\right),
\end{equation*} 
and reasoning as in Proposition \ref{dens}, we obtain that there is a constant $c>0$ and a decreasing sequence of positive real numbers $r_n\to0$ such that
\begin{equation*}
c\le \prod_{i=1}^3\frac{|\{w_i>0\}\cap B_{r_n}|}{|B_{r_n}|},\ \forall n\in\N,
\end{equation*}
and so, for each $i=1,2,3$, we have
\begin{equation*}
c\le \frac{|\{w_i>0\}\cap B_{r_n}|}{|B_{r_n}|},\ \forall n\in\N.
\end{equation*}
Using Lemma \ref{bnd}, Corollary \ref{est} and Lemma \ref{caf} for $r=r_n$, we obtain
\begin{equation}\label{caf}
c\le \frac{|\{w_i=0\}\cap B_{r_n}|}{|B_{r_n}|}\left(\frac{1}{r_n|\partial B_{r_n}|}\int_{\partial B_{r_n}}u\,d\HH^{d-1}\right)^2\leq\frac{C_d}{r_n^d}\int_{B_r}|\nabla w_i|^2\,dx,
\end{equation}
which proves that \eqref{twograd130308} holds for every $i=1,2,3$. Now the conclusion follows as in Remark \ref{triple130308}.
\end{proof}

\begin{oss}
Let $\Omega_1,\dots,\Omega_h\subset\R^d$ be a family of disjoint energy subsolutions. Then we can classify the points in $\R^d$ in three groups, as follows:
\begin{itemize}
\item Simple points 
\begin{equation*}
Z_1=\left\{x\in\R^d:\ \exists\Omega_i>0\ \hbox{s.t.}\ x\notin\partial^M\Omega_j,\ \forall j\neq i\right\}.
\end{equation*}

\item Internal double points
$$Z_2^i=\left\{x\in\R^d:\ \exists i\neq j\ \hbox{s.t.}\ x\in \partial^M\Omega_i\cap\partial^M\Omega_j;\ \exists r>0\ \hbox{s.t.}\ |B_r(x)\cap(\Omega_i\cup\Omega_j)^c|=0\right\}.$$
\item Boundary double points 
$$Z_2^b=\left\{x\in\R^d:\ \exists i\neq j\ \hbox{s.t.}\ x\in \partial^M\Omega_i\cap\partial^M\Omega_j;\ |B_r(x)\cap(\Omega_i\cup\Omega_j)^c|>0, \ \forall r>0\right\}.$$
\end{itemize} 
\end{oss}

\section{Multiphase shape optimization problems}\label{s5}
Let $\Dr\subset\R^d$ be a bounded open set. In this section we consider shape optimization problems of the form 
\begin{equation}\label{opft1e1}
\min\Big\{g\left(F_1(\Omega_1),\dots,F_h(\Omega_h)\right)+m\sum_{i=1}^h|\Omega_i|:\ \Omega_i\in\mathcal{A}_{\cp}(\Dr),\ \forall i;\ \Omega_i\cap\Omega_j=\emptyset,\ \forall i\neq j\Big\},
\end{equation}
where $g:\R^h\to\R$ is increasing in each variable and l.s.c., $F_1,\dots,F_h: \mathcal{A}_{\cp}(\Dr) \ra \R$ are decreasing with respect to inclusions and continous for the $\g$-convergence, and $m \ge 0$ is a given constant.  Problem \eqref{opft1e1} admits a solution following Theorem  \ref{introth130312}.

\begin{deff}\label{gammalip}
We say that  $F:\mathcal{A}_{\cp}(\Dr)\to\R$ is locally $\g$-Lipschitz for sub domains (or simply $\gamma$-Lip), if for each  $\Omega \in \mathcal{A}_{\cp}(\Dr)$, there are constants $C>0$ and $\eps>0$ such that 
\begin{equation*}
|F(\widetilde\Omega)-F(\Omega)|\le Cd_\g(\widetilde\Omega,\Omega),
\end{equation*}
for every quasi-open set $\widetilde\Omega\subset\Omega$, such that  $d_\g(\widetilde\Omega,\Omega)\le\eps$.
\end{deff}

\begin{oss}
Following Theorem \ref{gammalipth}, we have that the functional associated to the $k$-th eigenvalue of the Dirichlet Laplacian $\Omega\mapsto\lambda_k(\Omega)$ is $\gamma$-Lip, for every $k\in\N$.
\end{oss}
    
\begin{teo}\label{opft2}
Assume that  $g$ is locally Lipschitz continuous,  each of the functionals $F_i$, $i=1,\dots,h$ is $\gamma$-Lip and  $m>0$ and $(\Omega_1,\dots,\Omega_h)$ is a solution of \eqref{opft1e1}. Then every quasi-open set $\Omega_i$, $i=1,\dots,h$, is an energy subsolution.
\end{teo}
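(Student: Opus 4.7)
The proof is essentially a comparison argument using the competitor that keeps $h-1$ cells fixed and shrinks a single cell. Fix an index $i \in \{1,\dots,h\}$; by relabelling, assume $i=1$. The plan is to take an arbitrary quasi-open set $\widetilde\Omega_1\subset\Omega_1$ with $d_\gamma(\widetilde\Omega_1,\Omega_1)$ sufficiently small, plug $(\widetilde\Omega_1,\Omega_2,\dots,\Omega_h)$ into \eqref{opft1e1} as a competitor (note that disjointness is automatic since we only shrink one cell), and turn the resulting inequality into the subsolution inequality \eqref{sub1} for $\Omega_1$.

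By optimality of $(\Omega_1,\dots,\Omega_h)$, using that only the first cell and its measure change,
\[
m\big(|\Omega_1|-|\widetilde\Omega_1|\big)\le g\big(F_1(\widetilde\Omega_1),F_2(\Omega_2),\dots,F_h(\Omega_h)\big)-g\big(F_1(\Omega_1),F_2(\Omega_2),\dots,F_h(\Omega_h)\big).
\]
Since $F_1$ is $\gamma$-Lip, there exist $C_1>0$ and $\varepsilon_1>0$ (depending on $\Omega_1$) such that for all quasi-open $\widetilde\Omega_1\subset\Omega_1$ with $d_\gamma(\widetilde\Omega_1,\Omega_1)<\varepsilon_1$, we have $|F_1(\widetilde\Omega_1)-F_1(\Omega_1)|\le C_1d_\gamma(\widetilde\Omega_1,\Omega_1)$; in particular, $F_1(\widetilde\Omega_1)$ stays in a bounded neighbourhood of $F_1(\Omega_1)$. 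On this neighbourhood the function $g$ (fixed in the remaining $h-1$ arguments equal to $F_2(\Omega_2),\dots,F_h(\Omega_h)$) has a finite Lipschitz constant $L$. Combining the two Lipschitz bounds gives
\[
m|\Omega_1\setminus\widetilde\Omega_1|\le L\,C_1\,d_\gamma(\widetilde\Omega_1,\Omega_1),
\]
which is exactly the inequality $2m'|\Omega_1\setminus\widetilde\Omega_1|\le d_\gamma(\widetilde\Omega_1,\Omega_1)$ characterising (see the remark after Definition \ref{sub}) an energy subsolution with constant $m'=m/(2LC_1)>0$ and threshold $\varepsilon_1>0$. Repeating the argument for each $i=1,\dots,h$ yields the conclusion.

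There is essentially no hard step: the statement is a direct transfer of Lipschitz information from the cost to the torsional energy, and the monotonicity of $g$ plays no role (we use local Lipschitzness only). The only subtle point is making sure that shrinking $\Omega_1$ to $\widetilde\Omega_1$ keeps the evaluation point of $g$ in the region where a uniform Lipschitz constant is available; this is guaranteed by the $\gamma$-Lip hypothesis on $F_1$, which controls $|F_1(\widetilde\Omega_1)-F_1(\Omega_1)|$ by $d_\gamma$, so that choosing $d_\gamma$ smaller than some fixed threshold confines us to an arbitrarily small neighbourhood of $(F_1(\Omega_1),\dots,F_h(\Omega_h))$. No other cell is touched, so the contributions of $F_2,\dots,F_h$ and of $|\Omega_2|,\dots,|\Omega_h|$ cancel identically.
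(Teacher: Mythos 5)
Your proof is correct and follows exactly the paper's own argument: compare with the competitor obtained by shrinking a single cell, use optimality to bound $m(|\Omega_1|-|\widetilde\Omega_1|)$ by the increment of $g$, and then chain the local Lipschitz bound for $g$ with the $\gamma$-Lip bound for $F_1$ to get the subsolution inequality with constant proportional to $m/(LC)$. Your extra remark about confining $F_1(\widetilde\Omega_1)$ to a neighbourhood where $g$ has a uniform Lipschitz constant is a point the paper glosses over, and it is handled correctly.
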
   

\begin{proof}
Let $\widetilde\Omega_1\subset\Omega_1$ be a quasi-open set such that $d_\g(\widetilde\Omega_1,\Omega_1)<\eps$. By the Lipschitz character of $g$ and $F_1,\dots,F_h$, and the minimality of $(\Omega_1,\dots,\Omega_h)$, we have
\begin{equation*}
\begin{array}{ll}
m\left(|\Omega_1|-|\widetilde\Omega_1|\right)&\le g(F_1(\widetilde\Omega_1),F_2(\Omega_2),\dots,F_h(\Omega_h))-g(F_1(\Omega_1),F_2(\Omega_2),\dots,F_h(\Omega_h))\\
\\
&\le L\left(F_1(\widetilde\Omega_1)-F_1(\Omega_1)\right)\le CL\left(d_\g(\widetilde\Omega_1,\Omega_1)\right),
\end{array}
\end{equation*}
where $L$ is the Lipschitz constant of $g$ and $C$ the constant from Definition \ref{gammalip}. Repeating the argument for $\Omega_i$, we obtain that it is an energy subsolution with Lagrange multiplier $(CL)^{-1}m$. \end{proof}     
As a consequence, Theorem \ref{thss}, Proposition \ref{sep2qo} and Theorem \ref{lem130306} apply so we have all information about the perimeter of the cells and their interaction. In particular, there exists a family of open sets $\{D_1,\dots,D_h\}\subset\Dr$ such that
 $$\Omega_i\subset D_i,\ \forall i\in\{1,\dots,h\}\qquad\hbox{and}\qquad\cp(\Omega_i\cap D_j)=0,\ \forall i\neq j\in\{1,\dots,h\}.$$
 Moreover, $\Omega_i$ is a solution of the problem 
\begin{equation}\label{opft2e1}
\min\left\{F_i(\Omega):\ \Omega\subset D_i,\ \Omega\ \hbox{quasi-open},\ |\Omega|=|\Omega_i|\right\}.
\end{equation}

\begin{oss}
We note that Theorem \ref{opft2}  also holds in the case of \emph{subsolutions} of \eqref{opft1e1}. 
\end{oss}    
    
Here is a first example where Theorem \ref{opft2} applies.

\begin{cor}\label{lb2op}
Let $\Dr\subset\R^d$ be a bounded open set and $m>0$. Let $k_i\in \N$, $i=1,\dots,h$ and  $(\Om_1,\dots,\Om_h)$ be a solution of 
\begin{equation}\label{lb1partop}
\min\left\{\sum_{i=1}^h\lambda_{k_i}(\Omega_i)+m|\Omega_i|:\ \Omega_i\subset\Dr\ \hbox{quasi-open},\ \forall i;\ \Omega_i\cap\Omega_j=\emptyset,\forall i\neq j\right\}.
\end{equation}
Then, for every $i=1,\dots, h$ the quasi-open set $\Om_i$ is an energy subsolution. 
If, moreover, $k_i\in \{1,2\}$, then there exist open sets $\om_i\sq \Om_i$ such that $(\om_1,\dots,\om_h)$ is also a solution of \eqref{lb1partop}.
\end{cor}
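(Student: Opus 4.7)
Part (i) is a direct application of Theorem \ref{opft2}: take $g(x_1,\dots,x_h) = \sum_{i=1}^h x_i$, which is globally Lipschitz, together with $F_i = \lambda_{k_i}$, which is $\gamma$-Lip on $\mathcal{A}_{\cp}(D)$ by Theorem \ref{gammalipth} and the remark following Definition \ref{gammalip}. Hence each $\Omega_i$ is a shape subsolution for the torsional energy.

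For part (ii), comparing $(\Omega_1,\dots,\Omega_h)$ with $(\Omega_1,\dots,\widetilde\Omega_i,\dots,\Omega_h)$ for arbitrary quasi-open $\widetilde\Omega_i \subset \Omega_i$ shows that
$$\lambda_{k_i}(\Omega_i) + m|\Omega_i| \le \lambda_{k_i}(\widetilde\Omega_i) + m|\widetilde\Omega_i|,$$
so each $\Omega_i$ is itself a shape subsolution for its own single-phase $\lambda_{k_i}$-problem.

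\emph{Case $k_i=1$.} The set $\Omega_i$ satisfies \eqref{sub1.1}, so Theorem \ref{thsslb1}(iii) gives $\Omega_i = \{u_i>0\}$ up to zero capacity, where $u_i$ is the first Dirichlet eigenfunction. Combining the inner density estimate of Proposition \ref{dens} (available from part (i)) with standard elliptic regularity for $-\Delta u_i = \lambda_1(\Omega_i) u_i$, the canonical zero extension of $u_i$ is continuous on $\R^d$. Thus $\omega_i := \{u_i>0\}$ is open, agrees with $\Omega_i$ up to capacity zero, and in particular $|\omega_i|=|\Omega_i|$ and $\lambda_1(\omega_i)=\lambda_1(\Omega_i)$.

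\emph{Case $k_i=2$.} Let $u_1^{(i)}, u_2^{(i)} \in H^1_0(\Omega_i)$ be an $L^2$-orthonormal pair of eigenfunctions realizing $\lambda_1(\Omega_i)$ and $\lambda_2(\Omega_i)$. By the same regularity argument, each of them extends continuously to $\R^d$ by zero, so
$$\omega_i := \{u_1^{(i)} \neq 0\} \cup \{u_2^{(i)} \neq 0\}$$
is open, lies in $\Omega_i$ up to capacity zero, and satisfies $|\omega_i| \le |\Omega_i|$. Since $u_1^{(i)}$ and $u_2^{(i)}$ vanish outside $\omega_i$, they belong to $H^1_0(\omega_i)$ and span a $2$-dimensional subspace whose maximal Rayleigh quotient equals $\lambda_2(\Omega_i)$. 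By the Courant--Fischer characterization, $\lambda_2(\omega_i) \le \lambda_2(\Omega_i)$; the reverse inequality follows from the monotonicity $\omega_i \subseteq \Omega_i$, so $\lambda_2(\omega_i)=\lambda_2(\Omega_i)$. Assembling the $\omega_i$, which are pairwise disjoint as open subsets of pairwise disjoint quasi-open sets, one gets
$$\sum_{i=1}^h \lambda_{k_i}(\omega_i) + m|\omega_i| \le \sum_{i=1}^h \lambda_{k_i}(\Omega_i) + m|\Omega_i|,$$
and the optimality of $(\Omega_1,\dots,\Omega_h)$ forces equality, so $(\omega_1,\dots,\omega_h)$ is also a solution of \eqref{lb1partop}.

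\textbf{Main obstacle.} The delicate point is the continuity on all of $\R^d$ (not just on the interior of the support) of the extended eigenfunctions. Interior Hölder regularity is routine, but continuity across $\partial\Omega_i$ is not automatic: Remark \ref{buveqo1} shows that energy subsolutions can have empty interior. What rescues the argument is precisely the subsolution structure established in part (i): the non-degeneracy at measure-theoretic boundary points given by Proposition \ref{dens} and Lemma \ref{bnd} is what is needed to upgrade the interior elliptic estimates into continuity of the zero extension of $u_j^{(i)}$.
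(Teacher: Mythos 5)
Part (i) of your proposal is correct and matches the paper: it is exactly Theorem \ref{opft2} applied with $g(x_1,\dots,x_h)=\sum_i x_i$ and $F_i=\lambda_{k_i}$, the latter being $\gamma$-Lip by Theorem \ref{gammalipth}.

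Part (ii) has a genuine gap, and it is located precisely where you placed your ``main obstacle''. You claim that the continuity on $\R^d$ of the zero extension of the eigenfunctions follows from interior elliptic regularity upgraded by the \emph{non-degeneracy} of the subsolution (Lemma \ref{bnd}, Proposition \ref{dens}). This cannot work: non-degeneracy is a \emph{lower} bound on the growth of the state function near $\partial^M\Omega_i$, whereas continuity of the zero extension requires an \emph{upper} bound, i.e.\ decay to zero at the free boundary, which comes from comparison with \emph{outer} perturbations --- information a subsolution does not carry. Remark \ref{buveqo1} is not merely a cautionary tale here, it is a counterexample to your mechanism: it exhibits a set that is a full minimizer (hence a subsolution with all the non-degeneracy you invoke) of $\lambda_1+|\cdot|$ in a quasi-open box, whose eigenfunction has a positivity set with empty interior. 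So the subsolution structure alone cannot ``rescue the argument''. The missing ingredient, which is the actual content of the paper's proof, is the separation result: by Proposition \ref{sep2qo} (via Theorem \ref{lem130306} and the discussion after Theorem \ref{opft2}) each cell $\Omega_i$ can be isolated inside an \emph{open} set $D_i$ disjoint from the other cells, so that $\Omega_i$ is a genuine \emph{minimizer} (admitting outer perturbations inside $D_i$) of the one-phase problem $\min\{\lambda_{k_i}(\Omega)+m|\Omega|:\ \Omega\subset D_i\}$. Only then can one invoke the regularity theorem of Brian\c{c}on--Lamboley \cite{brla}, which is a deep free-boundary result and not ``standard elliptic regularity'', to conclude openness for $k_i=1$.

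For $k_i=2$ there is a second missing idea. The second eigenfunction changes sign and no regularity for it is available directly; the paper instead uses the characterization
$$\lambda_2(\Omega)=\min\Big\{\max\{\lambda_1(\Omega_a),\lambda_1(\Omega_b)\}:\ \Omega_a,\Omega_b\subset\Omega\ \hbox{quasi-open},\ \Omega_a\cap\Omega_b=\emptyset\Big\},$$
so that the optimal pair $(\Omega_a,\Omega_b)$ solves a two-phase problem of type \eqref{opft1e1} with $g=\max$ and $F_a=F_b=\lambda_1$; each of $\Omega_a,\Omega_b$ is then again a subsolution, can be separated by open sets, and is open by \cite{brla}, and one takes $\omega_i=\Omega_a\cup\Omega_b$. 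Your choice $\omega_i=\{u_1^{(i)}\neq0\}\cup\{u_2^{(i)}\neq0\}$ is the right kind of object only if the continuity you assert were available, which it is not. (Note also that the paper explicitly warns that the optimal set for $\lambda_2$ need not itself be open --- only a sub-representative is --- which is consistent with the decomposition route and inconsistent with a proof by plain regularity of the eigenfunctions.)
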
   
  \begin{proof}
 The fact that each $\Om_i$ is a subsolution relies on the $\g$-Lip property of the $k$-th eigenvalue. If  $k_i\in \{1,2\}$, we use the existence of an open set $D_i$ such that $\Om_i$ is solution of 
 \begin{equation}\label{opft2e1.1}
\min\left\{\lb_{k_i}(\Omega)+ m|\Om| :\ \Omega\subset D_i,\ \Omega\ \hbox{quasi-open}\right\}.
\end{equation}
If $k_i=1$, following \cite{brla}, the set $\Om_i$ is open. If $k_i=2$, we note that the functional $\lambda_2$ can be alternatively defined as
$$\lambda_2(\Omega)=\min\Big\{\max\left\{\lambda_1(\Omega_a),\lambda_1(\Omega_b)\right\}:\ \Omega_a,\Omega_b\subset\Omega\ \hbox{quasi-open},\ \Omega_a\cap\Omega_b=\emptyset\Big\}.$$
Thus, if $(\Omega_a,\Omega_b)\in[\mathcal{A}_{\cp}(D_i)]^2$ is a solution of \eqref{opft1e1} with $g(x_1,x_2)=\max\{x_1,x_2\}$ and $F_a=F_b=\lambda_1$, then the set $\Omega=\Omega_a\cup\Omega_b$ is a solution of \eqref{opft2e1.1}.  Now, the quasi-open sets $\Om_a$ and $\Om_b$ can be isolated by open sets $D_a$ and $D_b$. Thus, $\Om_a$ and $\Om_b$ minimize the first Dirichlet eigenvalue with a fixed measure constraint in $D_a$ and $D_b$, respectively. Relying again on the regularity result from \cite{brla}, we obtain that $\Omega_a$ and $\Omega_b$ are open sets.
 \end{proof}

In particular the following holds.
\begin{cor}\label{lb2op}
Let $\Dr\subset\R^d$ be a bounded open set and $m>0$. For every solution $\Omega$ of the problem 
\begin{equation}\label{lb2ope1}
\min\left\{\lambda_2(\Omega)+m|\Omega|:\ \Omega\ \hbox{quasi-open},\ \Omega\subset\Dr\right\},
\end{equation}
there exists an open set $\om\sq \Om$ which is also solution and has the same measure as $\Omega$. 
\end{cor}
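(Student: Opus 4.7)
The plan is to reduce the single-phase problem for $\lambda_2$ to a two-phase problem for $\lambda_1$ via the min-max characterization
$$\lambda_2(\Omega)=\min\Big\{\max\{\lambda_1(\Omega_a),\lambda_1(\Omega_b)\}:\Omega_a,\Omega_b\subset\Omega\ \text{quasi-open, disjoint}\Big\},$$
and then to apply Corollary \ref{lb2op} in the case $g=\max$, $F_a=F_b=\lambda_1$ to obtain an open representative.

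First, given an optimal $\Omega$ for \eqref{lb2ope1}, I would produce a \emph{realizer} of the min-max: let $u_2$ be a second Dirichlet eigenfunction on $\Omega$ and set $\Omega_a=\{u_2>0\}$, $\Omega_b=\{u_2<0\}$. Testing $u_2^{\pm}$ on each nodal component yields $\lambda_1(\Omega_a)\le\lambda_2(\Omega)$ and $\lambda_1(\Omega_b)\le\lambda_2(\Omega)$. If both inequalities were strict, the orthogonal pair of normalized first eigenfunctions of $\Omega_a$ and $\Omega_b$, extended by zero, would span a two-dimensional subspace of $H^1_0(\Omega)$ with Rayleigh quotient strictly below $\lambda_2(\Omega)$, contradicting the Courant min-max formula. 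Hence $\max\{\lambda_1(\Omega_a),\lambda_1(\Omega_b)\}=\lambda_2(\Omega)$.

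Next, I would prove that $(\Omega_a,\Omega_b)$ solves the two-phase problem associated with $g=\max$, $F_a=F_b=\lambda_1$. For any disjoint competitors $\tilde\Omega_a,\tilde\Omega_b\subset D$, setting $\tilde\Omega=\tilde\Omega_a\cup\tilde\Omega_b$, the min-max gives $\lambda_2(\tilde\Omega)\le\max\{\lambda_1(\tilde\Omega_a),\lambda_1(\tilde\Omega_b)\}$ and disjointness gives $|\tilde\Omega|=|\tilde\Omega_a\cup\tilde\Omega_b|$. Combining with the optimality of $\Omega$ and Step 1:
\begin{align*}
\max\{\lambda_1(\Omega_a),\lambda_1(\Omega_b)\}+m|\Omega_a\cup\Omega_b|
&\le \lambda_2(\Omega)+m|\Omega|\\
&\le \lambda_2(\tilde\Omega)+m|\tilde\Omega|\\
&\le \max\{\lambda_1(\tilde\Omega_a),\lambda_1(\tilde\Omega_b)\}+m|\tilde\Omega_a\cup\tilde\Omega_b|.
\end{align*}
This shows $(\Omega_a,\Omega_b)$ is a solution of the two-phase problem. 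Setting $(\tilde\Omega_a,\tilde\Omega_b)=(\Omega_a,\Omega_b)$ collapses all inequalities into equalities, and in particular forces $|\Omega_a\cup\Omega_b|=|\Omega|$.

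Finally, I would invoke Corollary \ref{lb2op} (more precisely, the $k_i=2$ argument contained in its proof): Theorem \ref{opft2} makes each $\Omega_i$ an energy subsolution, Proposition \ref{sep2qo} separates them by open sets $D_a,D_b$, so each $\Omega_i$ minimizes $\lambda_1$ under the constraint $|\cdot|=|\Omega_i|$ in $D_i$, and the regularity result of \cite{brla} makes each $\Omega_i$ equivalent to an open set. Then $\omega:=\Omega_a\cup\Omega_b\subset\Omega$ is open, $|\omega|=|\Omega|$, and by monotonicity of $\lambda_2$ together with the min-max bound, $\lambda_2(\Omega)\le\lambda_2(\omega)\le\max\{\lambda_1(\Omega_a),\lambda_1(\Omega_b)\}=\lambda_2(\Omega)$, so $\omega$ is a solution. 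The only delicate point is the chain in Step 2, which has to be organized carefully so as to simultaneously extract both the optimality of $(\Omega_a,\Omega_b)$ for the two-phase problem and the measure identity $|\Omega_a\cup\Omega_b|=|\Omega|$; once that is in place, the rest is a direct application of the already established machinery.
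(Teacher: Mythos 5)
Your argument rests on exactly the same reduction the paper uses -- the characterization $\lambda_2(\Omega)=\min\big\{\max\{\lambda_1(\Omega_a),\lambda_1(\Omega_b)\}\big\}$ over disjoint quasi-open pairs, followed by Theorem \ref{opft2}, Proposition \ref{sep2qo} and the regularity result of \cite{brla} applied to the two-phase problem with $g=\max$ and $F_a=F_b=\lambda_1$ -- but you run the reduction in the opposite direction. The paper first solves the two-phase problem in an open container and observes that the union of its cells solves the single-phase problem; you start from the \emph{given} solution $\Omega$ and exhibit a two-phase minimizer inside it. Your direction is in fact the one the statement as written requires, since only it produces an open $\om\sq\Om$ with $|\om|=|\Om|$ for every prescribed solution $\Omega$; the chain of inequalities in your Step 2, together with its collapse to equalities, is the right device to extract simultaneously the two-phase optimality of $(\Omega_a,\Omega_b)$ and the measure identity $|\Omega_a\cup\Omega_b|=|\Omega|$.

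Two caveats. The nodal realizer of Step 1 is not always available: for a disconnected quasi-open $\Omega$ (two disjoint balls of different radii, say) a second eigenfunction need not change sign, so $\{u_2<0\}$ can be empty and your pair degenerates. Either treat that case separately (then $u_2$ is supported on a quasi-connected piece disjoint from the one carrying $u_1$, and those two pieces give the pair), or -- simpler and closer to the paper's spirit -- dispense with the eigenfunction: minimize $\max\{\lambda_1(\Omega_a),\lambda_1(\Omega_b)\}+m|\Omega_a\cup\Omega_b|$ over disjoint quasi-open pairs contained in $\Omega$, whose existence is Theorem \ref{introth130312}, and use the min--max identity only to show that the optimal value equals $\lambda_2(\Omega)+m|\Omega|$; your inequality chain then runs unchanged with any such minimizer in place of the nodal pair. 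The second caveat you inherit from the paper itself: passing from two-phase optimality of $(\Omega_a,\Omega_b)$ to ``$\Omega_a$ minimizes $\lambda_1$ at fixed measure in $D_a$'' is immediate for the cell realizing the $\max$, but degenerates for the other cell when $\lambda_1(\Omega_a)=\lambda_1(\Omega_b)$, since lowering the smaller eigenvalue does not change the value of $\max$; this point deserves an explicit word in either version of the proof.
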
    

   We emphasize that not every solution of \eqref{lb2ope1} is an open set. In fact, if the set $D$ and the constant $m$ are suitably chosen, there is a family of  solutions obtained by erasing the nodal line   of the second eigenfunction associated on the "largest" optimal set. The eigenfunction itself does not change, while the shape does. 
   Moreover, the optimal set is equivalent to an open set in the sense of the Lebesgue measure.

   A somehow similar result for functionals involving higher eigenvalues holds for $m=0$ in dimension $2$. We note that the existence of an optimal open partition  was already proved in \cite{bobuou}.

    \begin{teo}
    Let $D\sq \R^2$ be a bounded, open and smooth set, let $m=0$ and $k_i\in \N$, $i=1,\dots,h$. Let  $(\Om_1,\dots,\Om_h)$ be a solution of 
\eqref{lb1partop}. There exists a solution $(\widetilde\Om_1, \dots, \widetilde\Om_h)$ consisting of open sets such that, for every $i=1,\dots,h$, 
$$\ \lb_{k_i}(\widetilde\Om_i)=\lb_{k_i}(\Om_i)= \lb_{k_i}(\Om_i\cap \widetilde\Om_i).$$
Moreover, every eigenfunction $u_{k_i} (\widetilde \Om_i)$ is H\"older continuous on $\overline D$.
    \end{teo}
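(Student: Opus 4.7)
The strategy is to replace each quasi-open cell by the open positivity set of a continuous representative of one of its eigenfunctions, and then verify optimality of the resulting family.

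For each $i=1,\dots,h$, fix an $L^2$-normalized $k_i$-th eigenfunction $u_i\in H^1_0(\Omega_i)$ of the Dirichlet Laplacian on $\Omega_i$, extended by zero outside. The crucial, dimension-sensitive ingredient is the following claim: $u_i$ admits a H\"older continuous representative $\widetilde u_i$ on $\overline D$, with $\widetilde u_i=0$ on $\partial D$. This is the only place where the two-dimensional hypothesis and the smoothness of $D$ enter essentially; the result is the main 2D regularity statement of \cite{bobuou}. Conceptually, one combines the uniform $L^\infty$ bound on $u_i$ coming from the $k_i$-th eigenvalue estimate, the interior regularity inside $\Omega_i$ for the equation $-\Delta u_i=\lambda_{k_i}(\Omega_i)u_i$ with bounded right hand side, the decomposition of $u_i$ modulo the Dirichlet Green's potential $G_D(\lambda_{k_i}(\Omega_i)u_i)\in C^{0,\alpha}(\overline D)$, and a planar capacitary argument to rule out oscillations of the remainder across $\partial\Omega_i$. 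I shall take this continuity for granted in what follows; it is the principal technical obstacle.

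Set $\widetilde\Omega_i:=\{\widetilde u_i\neq 0\}$; by continuity of $\widetilde u_i$ and its vanishing on $\partial D$, this is an open subset of $D$. Since $u_i\in H^1_0(\Omega_i)$, its quasi-continuous representative vanishes quasi-everywhere on $\Omega_i^c$, so $\widetilde\Omega_i\subset\Omega_i$ quasi-everywhere, and monotonicity of eigenvalues under inclusion gives $\lambda_{k_i}(\widetilde\Omega_i)\ge\lambda_{k_i}(\Omega_i)$. To see pairwise disjointness, observe that for $i\neq j$ the product $\widetilde u_i\widetilde u_j$ vanishes almost everywhere, because $\Omega_i\cap\Omega_j$ has zero capacity and $u_\ell$ is zero almost everywhere outside $\Omega_\ell$; by continuity therefore $\widetilde u_i\widetilde u_j\equiv 0$ on $D$, so the open set $\widetilde\Omega_i\cap\widetilde\Omega_j=\{\widetilde u_i\neq 0\}\cap\{\widetilde u_j\neq 0\}$ has Lebesgue measure zero and is thus empty.

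The family $(\widetilde\Omega_1,\dots,\widetilde\Omega_h)$ is therefore an admissible competitor in \eqref{lb1partop} with $m=0$, so the minimality of $(\Omega_1,\dots,\Omega_h)$ yields $\sum_{i=1}^{h}\lambda_{k_i}(\Omega_i)\le\sum_{i=1}^{h}\lambda_{k_i}(\widetilde\Omega_i)$. Combined with the termwise inequalities from the previous paragraph, this forces $\lambda_{k_i}(\widetilde\Omega_i)=\lambda_{k_i}(\Omega_i)$ for every $i$, so $(\widetilde\Omega_i)$ is itself a solution. Since $\widetilde\Omega_i\subset\Omega_i$ quasi-everywhere, also $\Omega_i\cap\widetilde\Omega_i=\widetilde\Omega_i$ quasi-everywhere, and hence $\lambda_{k_i}(\Omega_i\cap\widetilde\Omega_i)=\lambda_{k_i}(\widetilde\Omega_i)=\lambda_{k_i}(\Omega_i)$. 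Finally, the H\"older continuity on $\overline D$ of any $k_i$-th eigenfunction of the open set $\widetilde\Omega_i$ follows by applying the same regularity result of the first step to $\widetilde\Omega_i\subset D$. The only genuine difficulty lies in that first step; everything else is a routine chain of capacitary monotonicity together with one invocation of the minimality of $(\Omega_i)$.
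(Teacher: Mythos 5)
Your first step is where the proof breaks down, and it cannot be repaired in the form you give it. You claim that a $k_i$-th eigenfunction of the \emph{original} quasi-open optimal cell $\Omega_i$ admits a H\"older continuous representative on $\overline D$, and you attribute this to \cite{bobuou}. That reference does not contain such a statement (its Theorem 2.1 is an approximation result: any quasi-open partition can be approximated in the $\gamma$-sense by partitions made of open sets), and the claim is false for general quasi-open sets: Remark \ref{buveqo1} of the present paper exhibits optimal quasi-open sets with empty interior, for which there is no interior region where $-\Delta u_i=\lambda_{k_i}(\Omega_i)u_i$ holds as a pointwise elliptic equation, and no boundary regularity (Wiener-type) condition available to control oscillation. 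Continuity of eigenfunctions is a \emph{conclusion} about the new, better sets one must construct, not a property of the given solution. The paper's route is entirely different: it approximates $(\Omega_1,\dots,\Omega_h)$ by disjoint open polygonal sets $A_i^\eps$ via \cite{bobuou}, extracts subsets $E_i^\eps$ with at most $k_i$ connected components realizing $\lambda_{k_i}$, enlarges them to an open partition $F_i^\eps$ covering $D$ whose complements have a uniformly bounded number of connected components, and then passes to the limit using \v{S}ver\'ak's theorem (the genuinely two-dimensional ingredient, \cite[Theorem 4.7.1]{bubu05}). The limit sets $\widetilde\Omega_i$ are open, are \emph{not} contained in $\Omega_i$ (which is why the statement records $\lambda_{k_i}(\Omega_i\cap\widetilde\Omega_i)$ as a separate quantity), and their eigenfunctions are H\"older continuous because $\R^2\setminus\widetilde\Omega_i$ has finitely many connected components and hence satisfies a uniform capacity density condition.

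There is a second, independent logical error in your optimality step. Since your $\widetilde\Omega_i\subset\Omega_i$, monotonicity gives $\lambda_{k_i}(\widetilde\Omega_i)\ge\lambda_{k_i}(\Omega_i)$, and the minimality of $(\Omega_1,\dots,\Omega_h)$ gives $\sum_i\lambda_{k_i}(\Omega_i)\le\sum_i\lambda_{k_i}(\widetilde\Omega_i)$ --- both inequalities point the same way, so nothing forces equality. To conclude you would need the reverse inequality $\lambda_{k_i}(\widetilde\Omega_i)\le\lambda_{k_i}(\Omega_i)$, which the single eigenfunction $u_i$ does not provide: $u_i$ only shows that $\lambda_{k_i}(\Omega_i)$ is \emph{some} eigenvalue of $\{u_i\neq0\}$, not the $k_i$-th one. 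This particular defect is fixable by taking $\widetilde\Omega_i$ to be the union of the supports of the first $k_i$ eigenfunctions, so that their span is a $k_i$-dimensional test subspace in $H^1_0(\widetilde\Omega_i)$; but the continuity claim underlying the whole construction remains unjustified.
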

    
    \begin{proof}
By \cite[Theorem 2.1]{bobuou}, we have that for each $\eps>0$ there are open sets $(A_1^\eps,\dots, A_h^\eps)$ such that $A_i^\eps\cap A_j^\eps=\emptyset$, for every $i\neq j\in\{1,\dots,h\}$ and $A_i^\eps$ $\gamma$-converges to $\Omega_i$, for every $i=1,\dots,h$. By choosing appropriate subsets of each $A_i^\eps$, we may suppose that the connected components of the open sets $A_1^\eps,\dots A_h^\eps$ are polygons. For each $A_i^\eps$ let $E_i^\eps\subset A_i^
\eps$ be a union of at most $k_i$ connected components of $A_i^\eps$ and such that $\lambda_{k_i}(E_i^\eps)=\lambda_{k_i}(A_i^\eps)$. By the compactness of the weak $\gamma$-convergence, we may suppose that $E_i^\eps$ weak $\gamma$-converges to some quasi-open set $\omega_i\subset\Omega_i$. Moreover, we have that $\omega_i\cap\widetilde\Omega_j=\emptyset$, for $i\neq j$, and 
$$\lambda_{k_i}(\omega_i)\le\liminf_{\eps\to0}\lambda_{k_i}(E_i^\eps)=\lim_{\eps\to0}\lambda_{k_i}(A_i^\eps)=\lambda_{k_i}(\Omega_i),$$
and, by the optimality of $\Omega_1,\dots,\Omega_h$, we have $\lambda_{k_i}(\omega_i)=\lambda_{k_i}(\Omega_i)$.  

We now enlarge each $E_i^\eps$ in order to obtain a partition which covers $\Dr$. We claim that for each $\eps$ there are disjoint open sets $F_1^\eps,\dots, F_h^\eps$ such that $E_i^\eps\subset F_i^\eps$, $F_i^\eps$ has at most $k_i$ connected components, $\Dr\cap\partial F_i^\eps$ is piecewise linear and $\overline \Dr=\bigcup_{i=1}^h\overline{F_i^\eps}$. One can obtain the family $F_1^\eps,\dots, F_h^\eps$ from $E_1^\eps,\dots, E_h^\eps$, considering all the connected components of $\Dr\setminus\left(\bigcup_{i=1}^h\overline{E_i^\eps}\right)$ and adding them, one by one, to one of the sets $E_1^\eps,\dots, E_h^\eps$, with which they have common boundary.
   We note that for every $i=1,\dots,h$, the number of connected components of $\R^2\setminus F_i^\eps$ is bounded uniformly in $\eps$. Thus, by Sverak's Theorem (see, for example, \cite[Theorem 4.7.1]{bubu05}), there are disjoint open sets $\widetilde\Omega_1,\dots,\widetilde\Omega_h$ such that $F_i^\eps$ $\gamma$-converges to $\widetilde\Omega_i$. Moreover, we have  $\omega_i\subset\widetilde\Omega_i$ and since, 
   $$\lambda_{k_i}(\widetilde\Omega_i)\le\liminf_{\eps\to0}\lambda_{k_i}(F_i^\eps)\le\liminf_{\eps\to0}\lambda_{k_i}(E_i^\eps)=\lambda_{k_i}(\Omega_i),$$
by the optimality of $\Omega_1,\dots,\Omega_h$, we have that $\lambda_{k_i}(\widetilde\Omega_i)=\lambda_{k_i}(\Omega_i)=\lambda_{k_i}(\omega_i)$.

Each eigenfunction belongs to $C^{0,\alpha} (\overline D)$ as a consequence of the fact that the sets $\R^2 \setminus \widetilde \Om_i$ have a finite number of connected components, hence they satisfy a uniform capacity density condition (see for instance \cite[Theorem 4.6.7]{bubu05}).
     
    \end{proof}

\section{Appendix: Proof of the Monotonicity Lemma}    
    
    The proof of Lemma \ref{mth3} follows the main steps and arguments of Theorem \ref{mth2}, for which we refer the reader to \cite{cajeke}. Nevertheless, the proof of   Lemma \ref{mth3} is simplified by the use of  the conclusion of Theorem \ref{mth2}. For the convenience of the reader, we use similar notations as in  \cite{cajeke}.  We report here only the technical difficulties brought by the absence of continuity of the functions $u_i$ and presence of the third phase. 
    
We start with recalling some preliminary results from \cite{cajeke}. For $i=1,2,3$, we use the notations 
\begin{equation}
A_i(r)=\int_{B_r}\frac{|\nabla u_i|^2}{|x|^{d-2}}\,dx,\qquad b_i(r)=\frac{1}{r^4}\int_{B_r}\frac{|\nabla u_i|^2}{|x|^{d-2}}\,dx.
\end{equation}   
We note that $A_i$ is increasing in $r$ and that $b_i$ is invariant under the rescaling $\widetilde u(x)=\frac{1}{r^2}u(xr)$.

\begin{lemma}\label{rem15}
There is a dimensional constant $C_d$ such that for each non-negative function $u\in H^1(\R^d)$ such that $\Delta u\ge-1$, we have 
\begin{equation*}
\int_{B_1}\frac{|\nabla u|^2}{|x|^{d-2}}\,dx\le C_d\left(1+\int_{B_2}u^2\,dx\right).
\end{equation*}
\end{lemma}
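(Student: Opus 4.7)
The plan is to reduce the proof to a weighted Caccioppoli estimate for a non-negative subharmonic function. First I would set $v(x) = u(x) + |x|^2/(2d)$; then $v \in H^1_{\mathrm{loc}}(\R^d)$, $v \geq 0$, and $\Delta v = \Delta u + 1 \geq 0$ distributionally, so $v$ is subharmonic. The elementary pointwise inequalities $|\nabla u|^2 \leq 2|\nabla v|^2 + 2|x|^2/d^2$ and $v^2 \leq 2u^2 + |x|^4/(2d^2)$, together with the fact that $\int_{B_1} |x|^{4-d}\,dx$ is a finite dimensional constant, reduce the lemma to establishing
$$\int_{B_1} |\nabla v|^2 |x|^{2-d}\,dx \leq C_d \int_{B_2} v^2\,dx \qquad (\star)$$
for every non-negative subharmonic $v \in H^1(B_2)$.

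For $(\star)$ I would apply a weighted Caccioppoli argument regularized at the origin. Fix a cutoff $\eta \in C^\infty_c(B_{3/2})$ with $\eta \equiv 1$ on $B_1$ and $|\nabla \eta| \leq 4$, and for $\rho > 0$ consider $G_\rho(x) = (|x|^2+\rho^2)^{(2-d)/2}$ (with the convention $G_\rho \equiv 1$ if $d=2$), a smooth bounded regularization of the Newtonian weight $|x|^{2-d}$. A direct computation gives $\Delta G_\rho = (2-d)d\,\rho^2(|x|^2+\rho^2)^{-d/2-1} \leq 0$, i.e. $G_\rho$ is superharmonic. Testing the subharmonicity of $v$ against the non-negative, $H^1$-compactly supported function $\varphi = v \eta^2 G_\rho$ gives $\int \nabla v \cdot \nabla \varphi \,dx \leq 0$. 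Expanding, absorbing the $\nabla v \cdot \nabla \eta$ cross term via Cauchy--Schwarz, and integrating by parts once more on $\int v \eta^2 \nabla v \cdot \nabla G_\rho = \tfrac12\int \eta^2 \nabla(v^2)\cdot\nabla G_\rho$ produces a term $-\tfrac12\int v^2\eta^2\Delta G_\rho \geq 0$ which we discard thanks to the superharmonicity of $G_\rho$; the remaining contributions are supported on $\{\nabla \eta \neq 0\} \subset B_{3/2}\setminus B_1$, where both $G_\rho$ and $|\nabla G_\rho|$ are bounded by dimensional constants uniformly in $\rho$. The result is the uniform bound
$$\int_{B_1} |\nabla v|^2 G_\rho\,dx \leq C_d \int_{B_{3/2}} v^2\,dx.$$
Letting $\rho \downarrow 0$ and invoking monotone convergence (the map $\rho \mapsto G_\rho$ being decreasing with $G_\rho \uparrow |x|^{2-d}$) yields $(\star)$, hence the lemma.

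The main technical obstacle is that the natural weight $|x|^{2-d}$ is singular at the origin, so direct integration by parts against it is not licit. The regularization $G_\rho$ is designed precisely to keep every computation in smooth, bounded territory, while the choice $\eta \equiv 1$ on $B_1$ ensures that the boundary-type error terms involve $\nabla\eta$ only on an annulus bounded away from $0$, where $G_\rho$ and $|\nabla G_\rho|$ remain controlled by dimensional constants independently of $\rho$; this is what lets the $\rho \to 0$ limit pass through without deteriorating the final constant.
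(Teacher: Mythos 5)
Your proof is correct and complete. Note that the paper itself does not prove this lemma: it simply cites \cite[Remark 1.5]{cajeke}, so you have supplied a self-contained argument where the authors supply none. Your route is essentially the same computation that underlies the Caffarelli--Jerison--Kenig remark (and which the authors reproduce in their Appendix when they estimate $2\int_{B_{1+\eps}}|\nabla \widetilde u_i|^2|x|^{2-d}\,dx$ inside the proof of Lemma \ref{lemma24}): everything hinges on the identity $\Delta(w^2)=2|\nabla w|^2+2w\Delta w$ together with the superharmonicity of the Newtonian weight $|x|^{2-d}$. The two packagings differ in how they handle the two sources of trouble. CJK keep $u$ itself, use $\Delta(u^2)\ge 2|\nabla u|^2-2u$, and move the Laplacian onto $|x|^{2-d}$ directly, picking up the distributional term $\Delta(|x|^{2-d})=-c_d\delta_0$, which has the right sign; this is shorter but requires justifying the integration by parts against a singular weight (the paper's Appendix does this by working on $B_{1+\eps}$ and letting $\eps\to0$, still somewhat formally at the origin). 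You instead subtract the paraboloid to get a genuinely non-negative subharmonic $v$ and regularize the weight by $G_\rho=(|x|^2+\rho^2)^{(2-d)/2}$, so that every integration by parts is against smooth bounded data and the singular limit is recovered by monotone convergence at the very end. This costs two elementary pointwise inequalities at the start but buys a cleaner justification of the key step; the only point worth making explicit is that the distributional inequality $\int\nabla v\cdot\nabla\varphi\le0$ extends from non-negative $\varphi\in C^\infty_c$ to non-negative compactly supported $\varphi\in H^1$ by mollification, which is standard.
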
   
\begin{proof}
See \cite[Remark 1.5]{cajeke}.
\end{proof}
%
%
 
\begin{lemma}\label{lemma24}
There are dimensional constants $C>0$ and $\eps>0$ such that if $u_i$, $i=1,2,3$ are as in Theorem \ref{mth3} and $A_i(r)\ge C$, for every $i=1,2,3$ and $r\in[1/4,1]$, then
 for every $r\in[1/4,1]$ we have
$$\frac{d}{dr}\left[\frac{A_1(r)A_2(r)A_3(r)}{r^{6+3\eps}}\right]\ge -C\left(\frac{1}{\sqrt{A_1(r)}}+\frac{1}{\sqrt{A_2(r)}}+\frac{1}{\sqrt{A_3(r)}}\right)\frac{A_1(r)A_2(r)A_3(r)}{r^{6+3\eps}}.$$
\end{lemma}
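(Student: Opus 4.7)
The plan is to imitate the proof of the two-phase analogue (Lemma~2.4 in \cite{cajeke}) by computing a logarithmic derivative and estimating each factor separately, replacing the Friedland-Hayman two-phase spectral inequality by a three-phase version. Writing
$$L(r):=\frac{d}{dr}\log\!\left[\frac{A_1(r)A_2(r)A_3(r)}{r^{6+3\eps}}\right] =\sum_{i=1}^3\frac{A_i'(r)}{A_i(r)}-\frac{6+3\eps}{r},$$
the desired differential inequality is equivalent to
$$rL(r)\ge -C\sum_{i=1}^3\frac{1}{\sqrt{A_i(r)}}.$$

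The first step is a per-phase Almgren-Rellich bound. For fixed $r\in[1/4,1]$, let $\omega_i(r):=\{u_i>0\}\cap\partial B_r$ and let $\gamma_i(r)$ denote its characteristic exponent, i.e.\ the smallest $\gamma>0$ with $\gamma(\gamma+d-2)=\lambda_1(\omega_i(r))$, where $\lambda_1$ refers to the first Dirichlet eigenvalue of the Laplace-Beltrami operator on the spherical cap. Testing the identity $\dive\!\left(|x|^{2-d}(x\cdot\nabla u_i)\nabla u_i-\tfrac{1}{2}|x|^{2-d}|\nabla u_i|^2 x\right)$ against the indicator of $B_r$ and using $\Delta u_i+1\ge0$ as a distribution supported in $\{u_i>0\}$, one obtains a Pohozaev-type identity. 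Combined with the variational characterisation of $\lambda_1(\omega_i(r))$, this yields, exactly as in \cite[Lemma~2.4]{cajeke}, the Almgren frequency bound
$$\frac{rA_i'(r)}{A_i(r)}\ge 2\bigl(\gamma_i(r)+\nu\bigr)-\frac{C_d}{\sqrt{A_i(r)}},\qquad \nu:=\frac{d-2}{2},$$
where the error term originates from the contribution of the right-hand side $1$ in $\Delta u_i\ge-1$ (it is controlled by $\|u_i\|_{L^2(B_r)}$, which by Lemma~\ref{rem15} is bounded by $C\sqrt{A_i(r)}$).

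The second step is a three-phase spectral inequality: there exists a dimensional constant $\eps_0>0$ such that
$$\gamma(\omega_1)+\gamma(\omega_2)+\gamma(\omega_3)\ge 3+\eps_0$$
for every triple $\omega_1,\omega_2,\omega_3$ of mutually disjoint (quasi-)open subsets of $\partial B_1$. The Friedland-Hayman inequality gives $\gamma(\omega_i)+\gamma(\partial B_1\setminus\omega_i)\ge 2$, and by monotonicity $\gamma(\partial B_1\setminus\omega_i)\le \min_{j\ne i}\gamma(\omega_j)$; combining these shows that $\sum_i\gamma(\omega_i)\ge 3$ strictly, since equality in Friedland-Hayman forces $\omega_i$ and $\partial B_1\setminus\omega_i$ to be complementary hemispheres, a configuration incompatible with three disjoint nonempty phases. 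The strict improvement $\sum_i\gamma_i\ge 3+\eps_0$ then follows by compactness in the (sequentially continuous) dependence of $\gamma$ on the set.

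Putting both steps together,
$$\sum_{i=1}^3 \frac{rA_i'(r)}{A_i(r)}\ge 2\sum_i\gamma_i(r)+6\nu-C_d\sum_i\frac{1}{\sqrt{A_i(r)}}\ge 6+2\eps_0+6\nu-C_d\sum_i\frac{1}{\sqrt{A_i(r)}}.$$
Choosing $\eps:=\tfrac{2}{3}\eps_0$ (so that $6+3\eps\le 6+2\eps_0+6\nu$) gives exactly the claimed inequality after multiplying by $1/r$ and using $r\ge 1/4$. The main obstacle I expect is the rigorous justification of the per-phase bound in the non-smooth setting: the Pohozaev identity must be used with the distributional derivative $\Delta u_i+1$, and the slicing on $\partial B_r$ must be performed for a.e.\ $r$ in a way that does not require $u_i$ to be continuous. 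This is precisely the technical issue the paper advertises as ``the lack of continuity'' and must be handled by an approximation/coarea argument paralleling the one in the appendix of \cite{cajeke}.
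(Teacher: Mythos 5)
Your proposal follows essentially the same route as the paper's proof (which in turn tracks \cite[Lemma 2.4]{cajeke}): reduce the differential inequality to the lower bound $\sum_i rA_i'(r)/A_i(r)\ge 6+3\eps - C\sum_i A_i(r)^{-1/2}$, prove a per-phase bound in terms of the characteristic constant of the trace support on the sphere, and close with a strict three-phase Friedland--Hayman inequality whose strictness is obtained exactly as in the paper (equality would force two of the caps to exhaust $\partial B_1$ up to measure zero, making the third characteristic constant infinite), together with the existence of an optimal spherical partition.

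Two concrete corrections. First, your per-phase frequency bound is misnormalized: for the weighted quantity $A_i(r)=\int_{B_r}|\nabla u_i|^2|x|^{2-d}\,dx$ the correct bound is $rA_i'(r)/A_i(r)\ge 2\gamma_i(r)-C_dA_i(r)^{-1/2}$, with no additive $2\nu=d-2$; the shift by $d-2$ pertains to the unweighted Dirichlet integral (test with $u=|x|^{\gamma}\phi(x/|x|)$: then $A(r)\sim r^{2\gamma}$, so $rA'/A=2\gamma$). As stated your inequality is false for $d\ge3$, although the error is in the harmless direction and your final choice $\eps=\tfrac23\eps_0$ still closes the argument once the $+\nu$ is deleted. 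Relatedly, the error term coming from $\Delta u_i\ge-1$ is not controlled via Lemma \ref{rem15} (which bounds $A_i$ by the $L^2$ norm, i.e. goes the other way); in the paper it is bounded by $C_d+C_d\left(\int_{\partial B_r}u_i^2\,d\HH^{d-1}\right)^{1/2}\le C_d+C_d\sqrt{B_i(r)/\lambda_i}$ via the Poincar\'e inequality on the cap, which turns into $C\sqrt{A_i(r)}$ only in the case $B_i\le(6+3\eps)A_i$, the complementary case being trivial. Second, for $\gamma_i(r)$ to be finite and for that Poincar\'e inequality to be available you must show that each $\{u_i>0\}$ meets $\partial B_r$ in a set of positive $(d-1)$-capacity --- this is precisely where the hypothesis $A_i(r)\ge C$ enters, since otherwise $u_i1_{B_r}\in H^1_0(B_r)$ and one gets $A_i(r)\le C_d$ --- and that the three traces are disjoint up to $(d-1)$-capacity on the sphere, not merely a.e. in $\R^d$. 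Both points are part of the ``lack of continuity'' issue you flag at the end, and both are handled explicitly in the paper's proof; without them the three-phase spectral inequality cannot be applied to the caps $\omega_i(r)$.
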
     
\begin{proof}
A similar two-phase result is proved in \cite{cajeke} in the  framework of continuous functions.  We set, for $i=1,2,3$ and $r>0$,
$$B_i(r)=\int_{\partial B_r}|\nabla u_i|^2\,d\HH^{d-1}.$$
Then, computing the derivative as in \cite[Lemma 2.4]{cajeke}, it is sufficient to prove for almost every $r$
\begin{equation}\label{prf130306_3}
-\frac{6+3\eps}{r}+\frac{B_1(r)}{A_1(r)}+\frac{B_2(r)}{A_2(r)}+\frac{B_3(r)}{A_3(r)}\ge -C\left(\frac{1}{\sqrt{A_1(r)}}+\frac{1}{\sqrt{A_2(r)}}+\frac{1}{\sqrt{A_3(r)}}\right).
\end{equation}

We shall prove this inequality only for $r$ such that $B_i(r) <+\infty$ (which means that $u_i\lfloor_{\partial B_r}$ belongs to $H^1(\partial B_r)$). By a rescaling argument we can assume that  $r=1$.

   We first note that since $\Delta (u_i(x)+|x|^2/2d)\ge 0$, we have
$$\max_{x\in B_{1/2}}u_i(x)+|x|^2/2d\le C_d+C_d\int_{\partial B_1} u_i\,d\HH^{d-1},$$
and as a consequence
\begin{equation}\label{subs130305}
\int_{B_1}u_i|x|^{2-d}\le C_d+C_d\int_{\partial B_1} u_i\,d\HH^{d-1}\le C_d+C_d\left(\int_{\partial B_1} u_i^2\,d\HH^{d-1}\right)^{1/2}.
\end{equation}
 
We     now prove that if $C_d$ is large enough, then each set $\{u_i>0\}$ intersects the sphere $\partial B_1$, i.e. $\cp(\{u_i>0\}\cap\partial B_1)>0$. Indeed, suppose by absurd that $\cp(\{u_i>0\}\cap\partial B_1)=0$ and let $\widetilde u_i:=1_{B_1}u_i\in H^1_0(B_1)$. We have that $\Delta \widetilde u_i+1\ge 0$ and $\widetilde u_i\in H^1_0(B_{1+\eps})$ for every $\eps>0$. Up to an approximation in $H^1_0(B_{1+\eps})$, we can suppose that $\widetilde u_i\in C^\infty_c(B_{1+\eps})$. Using the fact that 
\begin{equation}\label{eq130306}
\Delta (\widetilde u_i^2)=2|\nabla\widetilde u_i|^2+2\widetilde u_i\Delta \widetilde u_i\ge 2|\nabla\widetilde u_i|^2-2\widetilde u_i,
\end{equation}
we obtain
\begin{equation*}
\begin{array}{ll}
2\int_{B_{1+\eps}}|\nabla \widetilde u_i|^2|x|^{2-d}\,dx&\le \int_{B_{1+\eps}}\left(2\widetilde u_i+\Delta (\widetilde u_i^2)\right)|x|^{d-2}\,dx\\
\\
&\le \int_{B_{1+\eps}}2\widetilde u_i|x|^{2-d}+\widetilde u_i^2\Delta (|x|^{d-2})\,dx\\
\\
&\le \int_{B_{1+\eps}}2\widetilde u_i|x|^{2-d}\,dx.
\end{array}
\end{equation*} 
Letting $\eps\to0$ and using \eqref{subs130305}, we obtain
\begin{equation}\label{subs130305b}
A_i(1)=\int_{B_1}|\nabla u_i|^2|x|^{2-d}\,dx\le C_d,
\end{equation}
which contradicts the hypothesis of the Lemma, for $C$ large enough.

   Since $\cp(\{u_i>0\}\cap\partial B_1)>0$, we have that $\cp_{d-1}(\{u_i>0\}\cap\partial B_1)>0$, for each $i=1,2,3$, where for any set $U\subset\partial B_1$ we denote with $\cp_{d-1}(U)$ the $(d-1)$-dimensional capacity on the sphere $\partial B_1$. 
   
    We next note that $\cp_{d-1}(\{u_i>0\}\cap\{u_j>0\};\partial B_1)=0$, for every $i\neq j\in\{1,2,3\}$. Indeed, since $|\{u_i>0\}\cap\{u_j>0\}|=0$ and $\{u_i>0\}\cap\{u_j>0\}$ is a quasi-open set in $\R^d$, we have $\cp(\{u_i>0\}\cap\{u_j>0\})=0$ and so $\HH^{d-1}(\partial B_1\cap\{u_i>0\}\cap\{u_j>0\})=0$. On the other hand, the restrictions of $u_i$ and $u_j$ on $\partial B_1$ are Sobolev functions. Thus the set $\partial B_1\cap\{u_i>0\}\cap\{u_j>0\}$, being quasi-open in $\partial B_1$ and of zero measure, is such that $\cp_{d-1}(\{u_i>0\}\cap\{u_j>0\};\partial B_1)=0$.

   Thus, for any $i\neq j\in\{1,2,3\}$, we have that $u_i$ is zero $\cp_{d-1}$-quasi-everywhere on the set $\{u_j>0\}$, which has a positive capacity on $\partial B_1$. Consequently, there is a constant $\lambda_i>0$ such that, for every $u\in H^1_0(\{u_i>0\}\cap\partial B_1)$, we have
$$\lambda_i\int_{\partial B_1}u^2\,d\HH^{d-1}\le \int_{\partial B_1}|\nabla_\tau u|^2\,d\HH^{d-1},$$   
where $\nabla_\tau$ is the tangential gradient on $\partial B_1$. In particular, we have    
$$\lambda_i\int_{\partial B_1}u_i^2\,d\HH^{d-1}\le \int_{\partial B_1}|\nabla_\tau u_i|^2\,d\HH^{d-1}\le \int_{\partial B_1}|\nabla u_i|^2\,d\HH^{d-1}=B_i(1).$$

Reasoning again as in \cite[Lemma 2.4]{cajeke}
we have that
\begin{equation}
2A_i(1)=2\int_{B_{}}|\nabla u_i|^2|x|^{2-d}\,dx\le C_d+C_d\sqrt{B_i(1)/\lambda_i}+ B_i(1)/\alpha_i,
\end{equation} 
where $\alpha_i>0$ satisfies
$$\alpha_i(\alpha_i+d-2)=\lambda_i.$$ 
Suppose first that there is some $i=1,2,3$, say $i=1$, such that $(6+3\eps)A_1(1)\le B_1(1)$. Then we have 
\begin{equation*}
-(6+3\eps)+\frac{B_1(1)}{A_1(1)}+\frac{B_2(1)}{A_2(1)}+\frac{B_3(1)}{A_3(1)}\ge -(6+3\eps)+\frac{B_1(1)}{A_1(1)}\ge 0,
\end{equation*}
 which proves \eqref{prf130306_3} and thus $(B)$.
 
 Assume now that for each $i=1,2,3$, we have $(6+3\eps)A_i(1)\ge B_i(1)$. Since, for each $i=1,2,3$ $A_i(1)\ge C_d$, we have 
\begin{equation*}
2A_i(1)\le C_d\sqrt{B_i(1)/\lambda_i}+ B_i(1)/\alpha_i\le C_d\sqrt{A_i(1)/\lambda_i}+ B_i(1)/\alpha_i.
\end{equation*} 
Moreover, $\alpha_i^2\le \lambda_i$, implies 
\begin{equation}
2\alpha_i A_i(1)\le  C_d\sqrt{A_i(1)}+ B_i(1).
\end{equation} 
Dividing both sides by $A_i(1)$ and summing for $i=1,2,3$, we obtain
\begin{equation*}
2(\alpha_1+\alpha_2+\alpha_3)\le C_d\sum_{i=1}^3\frac{1}{\sqrt{A_i(1)}}+\sum_{i=1}^3\frac{B_i(1)}{A_i(1)},
\end{equation*} 
and so, in order to prove \eqref{prf130306_3} and $(B)$, it is sufficient to prove that $\alpha_1+\alpha_2+\alpha_3\ge (6+3\eps)/2$. Let $\Omega^*_1,\Omega^*_2,\Omega^*_3\subset\partial B_1$ be the optimal quasi-open partition of the sphere $\partial B_1$ which is solving
\begin{equation}\label{minpb130306}
\min\left\{\alpha(\Omega_1)+\alpha(\Omega_2)+\alpha(\Omega_3):\ \Omega_i\subset\partial B_1\ \hbox{quasi-open},\forall i;\ \cp(\Omega_i\cap\Omega_j)=0, \forall i\neq j\right\},
\end{equation} 
where $\alpha(\Omega)$ is the unique positive real number such that $\lambda_1(\Omega)=\alpha(\Omega)(\alpha(\Omega)+d-2)$. We note that, as in \cite{cajeke}, $\alpha(\Omega^*_i)+\alpha(\Omega^*_j)\ge 2$, for $i\neq j$ and so summing on $i$ and $j$, we have
$$3\le \alpha(\Omega^*_1)+\alpha(\Omega^*_2)+\alpha(\Omega^*_3)\le \alpha_1+\alpha_2+\alpha_3.$$
Moreover, the first inequality is strict. Indeed, if this is not the case, we have that $\alpha(\Omega^*_1)+\alpha(\Omega^*_2)=2$ and so $|\partial B_1\sm (\Omega^*_1 \cup \Omega^*_2)|=0$, which would give that $\alpha (\Om^*_3)=+\infty$.
Choosing $\eps$ to be such that $6+3\eps$ is smaller than the minimum in \eqref{minpb130306}, the proof is concluded.  
\end{proof}

In what follows we will adopt the notation
$$A_i^k:=A_i(4^{-k}),\qquad b_i^k:=4^{4k}A_i(4^{-k}).$$
We prove a three-phase version of   \cite[Lemma 2.8]{cajeke}.

\begin{lemma}\label{lemma28}
There are dimensional constants $C_d>0$ and $\eps>0$ such that if the functions $u_i$, for $i=1,2,3$, are as in Theorem \ref{mth3} and $b_i^k\ge C_d$, for every $i=1,2,3$, then
$$4^{6+3\eps}A_1^{k+1}A_2^{k+1}A_3^{k+1}\le (1+\delta_k)A_1^kA_2^kA_3^k,$$
where 
$$\delta_k=C_d\left(\frac{1}{\sqrt{b_1^k}}+\frac{1}{\sqrt{b_2^k}}+\frac{1}{\sqrt{b_3^k}}\right).$$
\end{lemma}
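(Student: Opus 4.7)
The plan is to follow the strategy of Lemma 2.8 of \cite{cajeke}, adapted to three phases. The first step is a reduction to the base scale $k=0$: under the dilation $\tilde u_i(y)=4^{2k}u_i(4^{-k}y)$, which preserves the quantities $b_i(r)$ as noted just before the statement, the $\tilde u_i$ still satisfy the hypotheses of Theorem \ref{mth3} on an enlarged ball, and a direct change of variables gives $\tilde A_i(1)=b_i^k$ and $\tilde A_i(1/4)=4^{4k}A_i^{k+1}$. Dividing the target inequality by $(4^{4k})^3$ then reduces the claim to: whenever $A_i(1)\ge C_d$ for $i=1,2,3$,
\[
4^{6+3\eps}A_1(1/4)A_2(1/4)A_3(1/4)\le \Bigl(1+C\sum_i A_i(1)^{-1/2}\Bigr)A_1(1)A_2(1)A_3(1).
\]

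Setting $F(r):=A_1(r)A_2(r)A_3(r)/r^{6+3\eps}$, Lemma \ref{lemma24} provides the differential inequality $(\log F)'(r)\ge -C\sum_i A_i(r)^{-1/2}$ on $[1/4,1]$, valid as soon as $A_i(r)\ge C_0$ on the whole interval, where $C_0$ is the threshold of Lemma \ref{lemma24}. In the regular case where this holds, I integrate from $1/4$ to $1$ and use monotonicity of $A_i$ to bound the right-hand side by $C\sum_i A_i(1/4)^{-1/2}$. Taking the hypothesis constant $C_d$ large enough that this quantity is small, the resulting exponential linearizes to
\[
F(1/4)\le F(1)\Bigl(1+C'\sum_i A_i(1/4)^{-1/2}\Bigr),
\]
which is exactly the claimed inequality, except that the error lives at the inner scale $r=1/4$. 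I would then upgrade it to the outer scale via the two-phase monotonicity of Theorem \ref{mth2}: applied to each pair $(u_i,u_j)$, it controls how much the individual $A_i$ can drop between $r=1$ and $r=1/4$, so that $\sum_i A_i(1/4)^{-1/2}$ can be replaced by $C\sum_i A_i(1)^{-1/2}=C\sum_i (b_i^k)^{-1/2}$.

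In the degenerate case, $A_i(r)<C_0$ at some $r\in[1/4,1]$ and some $i$ (say $i=1$), Lemma \ref{lemma24} is not directly applicable on $[1/4,1]$; instead I would use Theorem \ref{mth2} on the pair $(u_2,u_3)$ to obtain $A_2(1/4)A_3(1/4)\le C\cdot 4^{-4}(1+A_2(1)+A_3(1))$, and combining with $A_1(1/4)\le C_0$ and dividing by $A_1(1)A_2(1)A_3(1)\ge C_d^3$ produces an excess of order $1/A_1(1)\le 1/\sqrt{A_1(1)}$, which is absorbed into $\delta_k$. The main obstacle I anticipate is precisely the passage from inner-scale to outer-scale errors in the regular case: unlike the two-phase situation of \cite{cajeke}, where $F$ is exactly nearly-monotone and the rescaling argument closes cleanly, here Lemma \ref{lemma24} gives only an almost-monotonicity with a genuine error term, and one has to carefully couple this with the exact monotonicity of Theorem \ref{mth2} to extract the sharper $(b_i^k)^{-1/2}$ form of $\delta_k$ stated in the claim — this is the genuinely three-phase technical difficulty.
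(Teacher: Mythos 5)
Your overall strategy (rescale to $k=0$, integrate the differential inequality of Lemma \ref{lemma24}, treat separately a case where that lemma does not apply) is the right one, and your ``degenerate'' case does close: if $A_1(1/4)\le C_0$, then Theorem \ref{mth2} applied to $(u_2,u_3)$ together with $A_i(1)\ge C_d$ gives a ratio of order $1/A_1(1)\le 1/\sqrt{b_1^0}$, which is absorbed into $\delta_0$. The genuine gap is in your ``regular'' case, and it sits exactly where you flag the difficulty. After integrating $(\log F)'\ge -C\sum_i A_i(r)^{-1/2}$ over $[1/4,1]$, the error is governed by $\sum_i A_i(1/4)^{-1/2}$, and under your hypothesis this is only bounded by the fixed constant $C_0^{-1/2}$ --- not by $\sum_i(b_i^0)^{-1/2}$, which must tend to $0$ as the $b_i^0$ grow (this decay is indispensable later, in the proof of Lemma \ref{mth3}, where $\delta_k$ must be summable). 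Your proposed repair --- that Theorem \ref{mth2} ``controls how much the individual $A_i$ can drop between $r=1$ and $r=1/4$'' --- is not what that theorem says: it bounds the \emph{product} $r^{-4}A_i(r)A_j(r)$ from above and gives no lower bound whatsoever on an individual $A_i(1/4)$ in terms of $A_i(1)$. So the passage from the inner-scale error to the outer-scale error is unproved, and the mechanism you invoke for it is false.

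The paper resolves this with a different dichotomy, on the \emph{ratio} of scales rather than on an absolute threshold. Either some $b_i$ drops by at least the factor $4^{2+3\eps}$, i.e.\ $4^{2+3\eps}b_i^1\le b_i^0$; then $4^{6+3\eps}A_i^1\le A_i^0$ and, multiplying by the monotonicity bounds $A_j^1\le A_j^0$ for the other two indices, the claimed inequality holds with no error term at all. Or else $4^{2+3\eps}b_i^1\ge b_i^0$ for every $i$; then each $b_i^1\ge 4^{-(2+3\eps)}C_d$ is large, so Lemma \ref{lemma24} applies on all of $[1/4,1]$, \emph{and} the inner-scale error is automatically comparable to the outer-scale one, since $A_i(r)^{-1/2}\le A_i(1/4)^{-1/2}=4^2(b_i^1)^{-1/2}\le 4^{3+3\eps/2}(b_i^0)^{-1/2}$. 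That comparability is precisely what your case split fails to provide. Replacing your threshold $A_i(1/4)\ge C_0$ by the ratio condition $4^{2+3\eps}b_i^1\ge b_i^0$ repairs the argument; the two-phase Theorem \ref{mth2} is then not needed in either branch of the dichotomy.
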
     
\begin{proof}
By rescaling, it is sufficient to consider the case $k=0$, in which we also have $b_i^0=A_i^0$, for $i=1,2,3$. 

   We consider two cases:
\begin{itemize}
\item Suppose that for some $i=1,2,3$, say $i=1$, we have $4^{2+3\eps}b_i^1\le b_i^0$. Then we have 
\begin{equation*}
\begin{array}{ll}
4^{6+3\eps}A_1^1A_2^1A_3^1=4^{2+3\eps}b_1^1A_2^1A_3^1\le b_1^0A_2^1A_3^1= A_1^0A_2^1A_3^1\le A_1^0A_2^0A_3^0.
\end{array}
\end{equation*} 

\item Suppose that for every $i=1,2,3$, we have $4^{2+3\eps}b_i^1\ge b_i^0$. Then $b_i^1\ge C_d$ for some $C_d$ large enough and so, we can apply Lemma \ref{lemma24}, obtaining that
\begin{equation*}
\begin{array}{ll}
\Phi'(r)&\ge -C\left(\frac{1}{\sqrt{A_1(r)}}+\frac{1}{\sqrt{A_2(r)}}+\frac{1}{\sqrt{A_3(r)}}\right)\Phi(r)\\
\\
&\ge -C4^{(2+3\eps)/2}\left(\frac{1}{\sqrt{b_1^0}}+\frac{1}{\sqrt{b_2^0}}+\frac{1}{\sqrt{b_3^0}}\right)\Phi(r),
\end{array}
\end{equation*} 

where $\Phi(r)=r^{-(6+3\eps)}A_1(r)A_2(r)A_3(r)$. Integrating for $r\in[1/4,1]$, we have 
$$\Phi(1/4)\ge\Phi(1)\exp\left(\frac{C}{\sqrt{b_1^0}}+\frac{C}{\sqrt{b_2^0}}+\frac{C}{\sqrt{b_3^0}}\right)\ge (1+\delta_0)\Phi(1).$$
\end{itemize}
\end{proof}

\begin{proof} (of Lemma \ref{mth3}) Let $M>0$ and let
$$S=\left\{k\in\N:\ 4^{(6+3\eps)k}A_1^kA_2^kA_3^k\le M\left(1+A_1^0+A_2^0+A_3^0\right)^2\right\}.$$
We will prove that if $\eps>0$ is small enough, then there is $M$ large enough such that for every $k\notin S$, we have 
$$4^{(6+3\eps)k}A_1^kA_2^kA_3^k\le CM\left(1+A_1^0+A_2^0+A_3^0\right)^2,$$
where $C$ is a constant depending on $d$ and $\eps$. 

We first note that if $k\notin S$, then we have
\begin{equation*}
\begin{array}{ll}
M(1+A_1^0+A_2^0+A_3^0)^2&\le 4^{(6+3\eps)k}A_1^{k}A_2^{k}A_3^{k}\\
\\
&\le 4^{-(2-3\eps)k}b_1^k 4^{4k}A_2^{k}A_3^{k}\\
\\
&\le 4^{-(2-3\eps)k}b_1^k C_d(1+A_1^0+A_2^0+A_3^0)^2,
\end{array}
\end{equation*}
and so $b_1^k\ge C_d^{-1}M4^{(2-3\eps)k}$, where $C_d$ is the constant from Theorem \ref{mth2}. Thus, choosing $\eps<2/3$ and $M>0$ large enough, we can suppose that, for every $i=1,2,3$, $b_i^k>C_d$, where $C_d$ is the constant from Lemma \ref{lemma28}. 

Suppose now that $L\in\N$ is such that $L\notin S$ and let 
$$l=\max\{k\in\N:\ k\in S\cap[0,L]\}<L,$$
where we note that the set $S\cap[0,L]$ is non-empty for large $M$, since for $k=0,1$, we can apply Theorem \ref{mth2}.
Applying Lemma \ref{lemma28}, for $k=l+1,\dots,L-1$ we obtain

\begin{equation}
\begin{array}{ll}
4^{(6+3\eps)L}A_1^{L}A_2^{L}A_3^{L}&\le\left(\prod_{k=l+1}^{L-1} (1+\delta_{k})\right)4^{(6+3\eps)(l+1)}A_1^{l+1}A_2^{l+1}A_3^{l+1}\\
\\
&\le\left(\prod_{k=l+1}^{L-1} (1+\delta_{k})\right)4^{(6+3\eps)(l+1)}A_1^{l}A_2^{l}A_3^{l}\\
\\
&\le\left(\prod_{k=l+1}^{L-1} (1+\delta_{k})\right)4^{6+3\eps}M\left(1+A_1^0+A_2^0+A_3^0\right)^2,
\end{array}
\end{equation}
where $\delta^k$ is the variable from Lemma \ref{lemma28}.

Now it is sufficient to notice that for $k=l+1,\dots, L-1$, the sequence $\delta_k$ is bounded by a geometric progression. Indeed, setting $\sigma=4^{-1+3\eps/2}<1$, we have that, for $k\notin S$, $\delta_k\le C\sigma^k$, which gives
\begin{equation}
\begin{array}{ll}
\prod_{k=l+1}^{L-1} (1+\delta_{k})&\le\prod_{k=l+1}^{L-1} (1+C\sigma^{k})\\
\\
&=\exp\left(\sum_{k=l+1}^{L-1}\log(1+C\sigma^k)\right)\\
\\
&\le \exp\left(C\sum_{k=l-1}^{L+1}\sigma^k)\right)\le \exp\left(\frac{C}{1-\sigma}\right),
\end{array}
\end{equation}
which concludes the proof.
\end{proof}


\bigskip
{\small\noindent
Dorin Bucur:
Laboratoire de Math\'ematiques (LAMA),
Universit\'e de Savoie\\
Campus Scientifique,
73376 Le-Bourget-Du-Lac - FRANCE\\
{\tt dorin.bucur@univ-savoie.fr}\\
{\tt http://www.lama.univ-savoie.fr/$\sim$bucur/}

\bigskip\noindent
Bozhidar Velichkov:
Scuola Normale Superiore di Pisa\\
Piazza dei Cavalieri 7, 56126 Pisa - ITALY\\
{\tt b.velichkov@sns.it}

\end{document}